\newtheorem{theorem}{Theorem}[section]
\newtheorem{lemma}[theorem]{Lemma}
\newtheorem{proposition}[theorem]{Proposition} 
\newtheorem{corollary}[theorem]{Corollary}
\newtheorem{remark}[theorem]{Remark}
\newtheorem{definition}{Definition}[section]
\newtheorem{assumption}{Assumption}[section]
\numberwithin{equation}{section}
\begin{document}

\def\paral{/\kern-0.55ex/}
\def\parals_#1{/\kern-0.55ex/_{\!#1}}
\def\bparals_#1{\breve{/\kern-0.55ex/_{\!#1}}}
\def\n#1{|\kern-0.24em|\kern-0.24em|#1|\kern-0.24em|\kern-0.24em|}
\newenvironment{proof}{
 \noindent\textbf{Proof}\ }{\hspace*{\fill}
  \begin{math}\Box\end{math}\medskip}

\newcommand{\A}{{\bf \mathcal A}}
\newcommand{\B}{{\bf \mathcal B}}
\def\C{\mathbb C}
\newcommand{\D}{{\rm I \! D}}
\newcommand{\dom}{{\mathcal D}om}
\newcommand{\pathR}{{\mathcal{\rm I\!R}}}
\newcommand{\Nabla}{{\bf \nabla}}
\newcommand{\E}{{\mathbf E}}
\newcommand{\Epsilon}{{\mathcal E}}
\newcommand{\F}{{\mathcal F}}
\newcommand{\G}{{\mathcal G}}
\def\g{{\mathfrak g}}
\newcommand{\HH}{{\mathcal H}}
\def\h{{\mathfrak h}}
\def\k{{\mathfrak k}}
\newcommand{\I}{{\mathcal I}}
\def\m{{\mathfrak m}}
\newcommand{\K}{{\mathcal K}}
\newcommand{\p}{{\mathbb P}}
\newcommand{\R}{{\mathbb R}}
\newcommand{\Rc}{{\mathcal R}}
\def\T{{\mathcal T}}
\def\M{{\mathcal M}}
\def\N{{\mathcal N}}
\newcommand{\pnabla}{{\nabla\!\!\!\!\!\!\nabla}}
\def\X{{\mathbb X}}
\def\Y{{\mathbb Y}}
\def\L{{\mathcal L}}
\def\1{{\mathbf 1}}
\def\half{{ \frac{1}{2} }}
\def\vol{{\mathop {\rm vol}}}

\def\ad{{\mathop {\rm ad}}}
\def\Conj{{\mathop {\rm Ad}}}
\def\Ad{{\mathop {\rm Ad}}}
\newcommand{\const}{\rm {const.}}
\newcommand{\eg}{\textit{e.g. }}
\newcommand{\as}{\textit{a.s. }}
\newcommand{\ie}{\textit{i.e. }}
\def\s.t.{\mathop {\rm s.t.}}
\def\esssup{\mathop{\rm ess\; sup}}
\def\Ric{\mathop{\rm Ric}}
\def\div{\mathop{\rm div}}
\def\ker{\mathop{\rm ker}}
\def\Hess{\mathop{\rm Hess}}
\def\Image{\mathop{\rm Image}}
\def\Dom{\mathop{\rm Dom}}
\def\id{\mathop {\rm Id}}
\def\Image{\mathop{\rm Image}}
\def\Cyl{\mathop {\rm Cyl}}
\def\Conj{\mathop {\rm Conj}}
\def\Span{\mathop {\rm Span}}
\def\trace{\mathop{\rm trace}}
\def\ev{\mathop {\rm ev}}
\def\Ent{\mathop {\rm Ent}}
\def\tr{\mathop {\rm tr}}
\def\graph{\mathop {\rm graph}}
\def\loc{\mathop{\rm loc}}
\def\so{{\mathfrak {so}}}
\def\su{{\mathfrak {su}}}
\def\u{{\mathfrak {u}}}
\def\o{{\mathfrak {o}}}
\def\pp{{\mathfrak p}}
\def\gl{{\mathfrak gl}}
\def\hol{{\mathfrak hol}}
\def\z{{\mathfrak z}}
\def\t{{\mathfrak t}}
\def\<{\langle}
\def\>{\rangle}
\def\span{{\mathop{\rm span}}}
\def\range{{\mathop{\rm range}}}
\def\dim{{\mathop{\rm dim}}}
\def\coker{{\mathop {\rm Coker}}}
\def\index{\mathop {{\rm index}}}
\def\diam{\mathrm {diam}}
\def\inj{\mathrm {inj}}
\def\Lip{\mathrm {Lip}}
\def\Iso{\mathrm {Iso}}
\def\Osc{\mathop{\rm Osc}}
\def\P{\mathbb P}
\renewcommand{\thefootnote}{}
\def\d{\; d}

\author{Xue-Mei Li}
\institute{Mathematics Institute, The University of Warwick, Coventry CV4 7AL, U.K.\\
Email address: xue-mei.li@warwick.ac.uk}
\title{ Limits of Random Differential Equations on Manifolds}
\date{}
\maketitle

\begin{abstract}
Consider a family of random ordinary differential equations on a manifold driven by vector fields of the form $\sum_kY_k\alpha_k(z_t^\epsilon(\omega))$ where $Y_k$ are vector fields, $\epsilon$ is a positive number, $z_t^\epsilon$ is a  ${1\over \epsilon} \L_0$ diffusion process taking values in possibly a different manifold, $\alpha_k$ are annihilators of $\ker (\L_0^*)$. Under H\"ormander type conditions on $\L_0$ we prove that, as $\epsilon $ approaches zero,
the stochastic processes $y_{t\over \epsilon}^\epsilon$ converge weakly and in the Wasserstein topologies. We describe this limit and give an  upper bound for the rate of the convergence. 
 \end{abstract}
\medskip

AMS classification: 60H, 60J, 60F, 60D.

\vspace{1em}
\hrule
\begin{center} 
Note added in Proof 
\end{center}

\noindent In the published version of this paper,  some important assumptions were
omitted in Lemmas~10.1 and 10.2. I removed these lemmas here.  They should be replaced by conditional Doob's inequality for 
stochastic integrals of functions of stochastic flows. 
\vspace{1em}
\hrule

\section{Introduction}

 Let $M$ and $G$ be finite dimensional smooth manifolds. Let $Y_k$, $k=1,\dots, m$, be $C^6$  vector fields on $M$, $\alpha_k$ real valued $C^r$ functions on $G$, $\epsilon$ a positive number, and
$(z_t^\epsilon)$  diffusions on a filtered probability space $(\Omega, \F, \F_t, \P)$ with values in $G$ and
 infinitesimal generator $\L_0^\epsilon={1\over \epsilon} \L_0$ which will be made precise later. 
 The aim of this paper is to study limit theorems associated to the system of  ordinary differential equations on $M$, 
 \begin{equation}
\label{1}
\dot y_t^\epsilon(\omega)=\sum_{k=1}^m Y_k\left(y_t^\epsilon(\omega)\right)\alpha_k(z_t^\epsilon(\omega))
\end{equation}
under the assumption that $\alpha_k$ `averages' to zero. The `average' is with respect to the unique invariant probability measure of $\L_0$, in case $\L_0$ satisfies strong H\"ormander's condition, and more generally the `average' is the projection to a suitable function space. We prove that $y_{t\over \epsilon}^\epsilon$ converges as $\epsilon\to 0$ to a Markov process whose Markov generator has an explicit expression.

 This study is motivated by problems arising from stochastic homogenization.   It turned out  that in the study
 of randomly perturbed systems with a conserved quantity, which does not necessarily take value in a linear space, the reduced equations for the slow variables can sometimes be transformed into (\ref{1}).   
Below, in section \ref{section-Examples} we illustrate this by 4 examples including one on the orthonormal frame bundle over a Riemannian manifold. Of these examples, the first is from \cite{Li-geodesic} where we did not  
know how to obtain a rate of convergence, and the last three from \cite{Li-homogeneous}
where a family of interpolation equations on homogeneous manifolds are introduced. An additional example can be found in \cite{LI-OM-1}.

\subsection{Outline of the Paper}
In all the examples, which we described in \S\ref{section-Examples} below, the scalar functions average to $0$ with respect to a suitable probability measure on $G$.
Bearing in mind that if a Hamiltonian system
 approximates a physical system with error $\epsilon$ on a compact time interval, over a time interval of size ${1\over \epsilon}$ the physical orbits deviate visibly from that of the Hamiltonian system unless the error is reduced by oscillations, it is natural and a classical problem to study ODEs whose right hand sides are random and whose averages in time are zero. 

The objectives of the present article are: (1)  to prove that, as $\epsilon$ tends to zero, the law of $(y_{s\over \epsilon}^\epsilon, s \le t)$ converges weakly to a probability measure
$\bar \mu$ on the path space over $M$ and to describe the properties of the limiting Markov 
semigroups; 
(2) to estimate the rate of convergence, especially in the Wasserstein distance. 
 For simplicity we assume that all the equations are complete. 
In sections \ref{section-formula}, \ref{section-uniform-estimates},
\ref{section-weak} and \ref{section-rate} we assume that $\L_0$ is  a regularity improving Fredholm operator
on a compact manifold $G$, see Definition \ref{def-Fredholm}. In Theorem \ref{thm-weak} we assume,
 in addition, that $\L_0$ has Fredholm index $0$.  But strong H\"ormander's condition can be used to replace
 the condition `regularity improving Fredholm operator of index $0$'.

For simplicity, throughout the introduction, $\alpha_k$ are bounded and belong to 
$ N^\perp$ where $N$ is the kernel of $\L_0^*$, the adjoint of the unbounded operator $\L_0$ in $L^2(G)$ with respect to the volume measure. In case $\L_0$ is not elliptic we assume in addition that $r\ge 3$ or $r\ge \max{\{3, {n\over 2} +1\}}$, depending on the result. 
The growth conditions on $Y_k$ are  in terms of a  control function $V$ and a controlled function space $B_{V,r}$ where $r$  indicates the order of the derivatives to be controlled, see (\ref{BVr}). For simplicity we assume both $M$ and $G$ are compact. 


In Section \ref{section-estimates} we present two elementary lemmas, Lemma \ref{lemma3}
and Lemma \ref{lemma3-2}, assuming $\L_0$ mixes exponentially in a weighted total 
variation norm with weight $W: G\to \R$.
  In  Section \ref{section-formula}, for $\L_0$ a regularity improving Fredholm operator and $f$ a $C^2$ function, 
  we deduce a formula for $f(y_{t\over \epsilon}^\epsilon)$ where the transmission of the  randomness from the fast motion $(z_t^\epsilon)$ to the slow motion $(y_t^\epsilon)$ is manifested in a martingale. This provides a platform
  for the uniform estimates over large time intervals,  weak convergences, and the study of rate of convergence in later sections.
  
In Section \ref{section-uniform-estimates}, we obtain uniform estimates
 in $\epsilon$ for functionals of $y_{t}^\epsilon$ over  $[0,{1\over \epsilon}]$.
 Let $\L_0$ be a regularity improving Fredholm operator,  $y_0^\epsilon=y_0$, and $V$ a $C^2$ function such that $\sum_{j=1}^m|L_{Y_j}V|\le c+KV$, $\sum_{i,j=1}^m|L_{Y_i}L_{Y_j}V|  \le c+KV$ for some numbers $c$ and $K$.
Then, Theorem \ref{uniform-estimates},  for every  numbers $p\ge 1$ there exists a positive number $\epsilon_0$ such that
  $\sup_{0<\epsilon\le \epsilon_0}\E \sup_{0\le u\le t} V^p(y_{u\over \epsilon}^\epsilon)$ is finite and belongs
  to $B_{V,0}$ as a function of $y_0$.
This leads to convergence in the Wasserstein distance and will be used later to prove a key lemma on averaging functions along the paths of $(y_t^\epsilon, z_t^\epsilon)$.

In Section \ref{section-weak}, $\L_0$ is an operator on a compact manifold $G$ satisfying 
H\"ormander's condition and with Fredholm index $0$; $M$ has positive injectivity radius and other geometric restrictions. In particular we do not make any assumption on the ergodicity of $\L_0$. 
Let
$\overline{\alpha_i\beta_j}$ denote $\sum_{l} u_l\<\alpha_i\beta_j, \pi_l\>$ where $\{u_l\}$ is a basis of the kernel of
$\L_0$ and $\{\pi_l\}$ the dual basis in the kernel of $\L_0^*$.
   Theorem \ref{thm-weak} states that, given bounds on $Y_k$ and its derivatives and for
 $\alpha_k\in C^r$ where $r\ge  \max{\{3, {n\over 2}+1\}}$, $(y_{s\over \epsilon}^\epsilon, s \le t)$ converges weakly, as $\epsilon \to 0$,  to the Markov process with Markov generator  $\bar \L=\sum_{i,j=1}^m \overline{\alpha_i\beta_j} L_{Y_i}L_{Y_j}$. This follows from a tightness result, Proposition \ref{tightness} where no assumption on the Fredholm index of $\L_0$ is made, and  a law of large numbers for sub-elliptic operators on compact manifolds, Lemma \ref{lln}. Convergences of $\{(y_{t\over \epsilon}^\epsilon, 0\le t\le T)\}$  in the Wasserstein $p$-distance are also obtained.

In Section \ref{section-sde} we study the solution flows of SDEs and their associated Kolmogorov equations, 
to be applied to the limiting operator $\bar \L$ in Section \ref{section-rate}. Otherwise this section is independent of the rest of the paper. Let $Y_k, Y_0$ be $C^6$ and $C^5$ vector fields respectively. 
 If $M$ is compact, or more generally if $Y_k$ are $BC^5$ vector fields, the conclusions in this section holds, trivially. Denote $B_{V,4}$ the set of functions whose derivatives up to order $r$ are controlled by a function $V$, c.f.(\ref{BVr}).
Let $\Phi_t(y)$ be the solution flow to 
$$dy_t=\sum_{k=1}^m Y_k(y_t)\circ dB_t^k+Y_0(y_t)dt.$$
Let $P_t f(y)=\E f(\Phi_t(y))$ and
$Z={1\over 2}\sum_{k=1}^m \nabla_{Y_k} Y_k+Y_0$. Let $V\in C^2(M, \R_+)$ and 
  $\sup_{s\le t}\E V^q(\phi_s(y))\in B_{V,0}$ for every $q\ge 1$. This assumption on $V$ is implied by the following conditions:
$|L_{Y_i}L_{Y_j} V| \le c+KV$, $|L_{Y_j}V|\le c+KV$, where $C, K$ are constants.
Let $\tilde V=1+\ln (1+|V|)$. We assume, in addition,  for some number $c$ the following hold:
\begin{equation}
\label{control-conditions}
{\begin{split}
&\sum_{k=1}^m\sum_{\alpha=0}^5 | \nabla ^{(\alpha)} Y_k| \in B_{V,0}, \;
&\sum_{\alpha=0}^4 |\nabla ^{(\alpha)} Y_0| \in B_{V,0},\; \\
&\sum_{k=1}^m| \nabla Y_k|^2 \le c\tilde V,\; &\sup_{|u|=1}\<\nabla_u Z, u\>\le c\tilde V.
\end{split}}
\end{equation}
Then there is a global smooth solution flow $\Phi_t(y)$, Theorem \ref{limit-thm}.  Furthermore for $f\in B_{V,4}$,
 $\L f\in B_{V,2}$, $\L^2 f\in B_{V,0}$, and $ P_tf \in B_{V,4}$. 

For $M=\R^n$, an example of the control pair is:  $V(x)=C(1+|x|^2)$ and $\tilde V(x)=\ln(1+|x|^2)$. Our conditions are weaker than those commonly used
in the probability literature for $d(P_tf)$, in two ways. Firstly we allow non-bounded first order derivative, secondly we allow one sided conditions on the drift and its first order derivatives. In this regard, we extend a theorem of W. Kohler, G. C. Papanicolaou \cite{Kohler-Papanicolaou74} where they used estimations from O. A. Oleinik- E. V. Radkevi{\v{c}} \cite{Oleinik-Radkevic-book}.
 The estimates on the derivative flows, obtained in this section, are often assumptions in applications of Malliavin calculus to the study of stochastic differential equations. Results in this section might be of independent interests.

Let $P_t$ be the Markov semigroup generated by $\bar \L$. In  Section \ref{section-rate}, we prove the following estimate:
$|\E f(\Phi_t^\epsilon(y_0))- P_tf(y_0)|\le  C(t)\gamma(y_0)\epsilon \sqrt{|\log \epsilon|}$
where $C(t)$ is a constant, $\gamma$ is a function in $B_{V,0}$ and $\Phi_t^\epsilon(y_0)$ the solution to (\ref{1}) with initial value $y_0$. 
The conditions on the vector fields $Y_k$ are similar to (\ref{control-conditions}), we also assume the conditions of Theorem \ref{uniform-estimates} and that $\L_0$ satisfies strong H\"ormander's condition. We incorporated traditional techniques on time averaging with techniques from homogenization. The homogenization techniques was developed from  \cite{Li-averaging} which was inspired by the study  in M. Hairer and G. Pavliotis \cite{Hairer-Pavliotis}.  
For the rate of convergence we were particularly influenced by the following papers:
 W. Kohler and G. C. Papanicolaou \cite{Kohler-Papanicolaou74, Kohler-Papanicolaou75}
 and G. C. Papanicolaou and S.R.S. Varadhan \cite{Papanicolaou-Varadhan73}.
 Denote $\hat P_{y^\epsilon_{t\over \epsilon}}$ the probability distributions of the random variables $y^\epsilon_{t\over \epsilon}$ and $\bar \mu_t$ the probability measure determined by $P_t$. The under suitable conditions,  $W_1(\hat P_{y^\epsilon_{t\over \epsilon}}, \bar \mu_t) \le C\epsilon^r$,
 where $r$ is  any positive number less or equal to ${1\over 4}$ and $W_1$ denotes the Wasserstein 1-distance,
 see \S \ref{Wasserstein}.

\subsection{Main Theorems}
\label{discussion}
We contrive to impose as little as possible on the vector fields $\{Y_k\}$, hence a few sets of assumptions are used. For the examples we have in mind, $G$ is
a compact Lie group acting on a manifold $M$, and so for simplicity $G$ is assumed to be compact throughout the article, with few exceptions. In a future study, it would be nice to provide some interesting examples in which $G$ is not compact.

If $M$ is also compact, only the following two conditions are needed:
(a)   $\L_0$ satisfies strong H\"ormander's condition; 
  (b) $\{\alpha_k\} \subset C^r\cap N^\perp$ where $N$ is the annihilator of  the kernel of $\L_0^*$ and $r$ is a sufficiently large number.
If $\L_0$ is elliptic,  `$C^r$' can be replaced by `bounded measurable'. For the convergence condition (a) can be replaced by 
`$\L_0$ satisfies H\"ormander's condition and has Fredholm index $0$'. If $\L_0$ has a unique invariant probability measure, no condition is needed on the Fredhom index of $\L_0$.
  
 {\bf  Theorem \ref{thm-weak} and Corollary \ref{convergence-wasserstein-p}. } Under the conditions of Proposition \ref{tightness}
and Assumption \ref{assumption-Hormander}, $(y_{t\over\epsilon}^\epsilon)$ converges weakly to  the Markov process determined by
$$\bar \L =-\sum_{i,j=1}^m \overline{ \alpha_i \L_0^{-1} \alpha_j }
L_{Y_i}L_{Y_j}, \quad  \overline{ \alpha_i \L_0^{-1} \alpha_j }=\sum_{b=1}^{n_0} u_b \< \alpha_i \L_0^{-1} \alpha_j ,\pi_b\>,$$
where $n_0$ is the dimension of the kernel of $\L_0$ which, by the assumption
that $\L_0$ has Fredholm index $0$,  equals the dimension of the kernel of $\L_0^*$. The set of functions $\{u_b\}$ is a basis of $\ker (\L_0)$ and $\{\pi_b\}\subset \ker (\L_0^*)$ its dual basis. In case $\L_0$ satisfies strong H\"ormander's condition, then there is a unique invariant measure and $\overline{ \alpha_i \L_0^{-1} \alpha_j }$ is simply the average of $\alpha_i \L_0^{-1} \alpha_j$ with respect
to the unique invariant measure.
Let $p\ge 1$ be a number and $V$ a Lyapunov type function
such that $\rho^p\in B_{V, 0}$, a function space controlled by $V$. If furthermore Assumption \ref{assumption2-Y} holds,  $(y_{\cdot\over \epsilon}^\epsilon)$ converges, on $[0,t]$,  in the Wasserstein $p$-distance. 
  \\[1em]

  {\bf Theorem \ref{rate}.}
 Denote $\Phi_t^\epsilon(\cdot)$ the solution flow to (\ref{1})  and $P_t$ the semigroup for $\bar \L$. If Assumption \ref{assumption-on-rate-result} holds then for $f \in B_{V,4}$,
$$\left|\E f\left(\Phi^\epsilon_{T\over \epsilon}(y_0)\right) -P_Tf(y_0)\right|\le \epsilon |\log \epsilon|^{1\over 2}C(T)\gamma_1(y_0),$$
where  $\gamma_1\in B_{V,0}$ and $C(T)$ is a constant increasing in $T$. Similarly,  if $f\in BC^4$,
\begin{equation}
\label{rate-summary}
\left|\E f\left(\Phi^\epsilon_{T\over \epsilon}(y_0)\right) -P_Tf(y_0)\right|
 \le \epsilon  |\log \epsilon|^{1\over 2}\,C(T)\gamma_2(y_0) \left(1+ |f|_{4, \infty}\right).
\end{equation}
where $\gamma_2$ is a function in $B_{V,0}$ independent of $f$ and $C$ are 
increasing functions. 
\\[1em] 

A complete connected Riemannian manifold is said to have bounded geometry if it has strictly positive injectivity radius,
and if the Riemannian curvature tensor and its covariant derivatives are bounded.

{\bf Proposition \ref{proposition-rate}.} 
 Suppose that $M$ has bounded geometry, $\rho_o^2 \in B_{V,0}$, and Assumption \ref{assumption-on-rate-result} holds.
Let $\bar \mu$ be the limit measure and  $\bar \mu_t=(ev_t)_*\bar \mu$. 
 Then for every $r<{1\over 4}$ 
there exists $C(T)\in B_{V,0}$ and $\epsilon_0>0$ s.t. for all $\epsilon\le\epsilon_0$ and $t\le T$,
$$d_W({\mathrm {Law}} ({y^\epsilon_{t\over \epsilon}}), \bar \mu_t)\le C(T)\epsilon^{r}.$$

 Besides the fact that we work on manifolds, where there is the inherited non-linearity and the problem with cut locus, the following aspects of the paper are perhaps new.
  (a) We do not assume there exists a unique invariant probability measure on the noise and  the effective processes are obtained by a suitable projection, accommodating one type of degeneracy. Furthermore the noise 
  takes value in another manifold, accommodating `removable' degeneracy. For example the stochastic processes in question lives in a Lie group, while the noise are entirely in the directions of a sub-group. (b) 
We used Lyapunov functions to control the growth of the vector fields and their derivatives,  leading to estimates uniform in $\epsilon$ and to the conclusion on the convergence in the Wasserstein topologies. 
A key step for the convergence is  a law of large numbers, with rates, for sub-elliptic operators (i.e. operators satisfying H\"ormander's sub-elliptic estimates). (c)  Instead of working with iterated time averages we use a solution to Poisson equations to reveal the effective operator. Functionals of the processes $y_{t\over \epsilon}^\epsilon$ splits naturally into the sum of a fast martingale, a finite variation term involving a second order differential operator in H\"ormander form, and a term of order $\epsilon$. From this we obtain the effective diffusion, in explicit H\"ormander form. It is perhaps also new to have an estimate for the rate of the convergence in the Wasserstein distance. Finally we improved known theorems on the existence of global smooth solutions for SDEs in \cite{Li-flow}, c.f.  Theorem \ref{limit-thm} below where a criterion is given in terms of a pair of Lyapunov functions. New estimates on the moments of higher order covariant derivatives of the derivative flows are also given.

\subsection{Classical Theorems} 
We review, briefly, basic ideas from existing literature on random ordinary differential equations with fast oscillating vector fields. Let $F(x,t,\omega, \epsilon):=F^{(0)}(x,t,\omega)+\epsilon F^{(1)}(x,t,\omega)$, where $F^{(i)}(x,t,\cdot)$ are measurable functions, for which
a Birkhoff ergodic theorem holds whose limit is denoted by $\bar F$. The solutions to the equations $\dot y_t^\epsilon = F(y_t^\epsilon,{t\over \epsilon},\omega, \epsilon)$
 over a time interval $[0, t]$, can be approximated by the solution to the averaged equation driven by 
 $\bar F$. 
  If $\bar F^{(0)}=0$,  we should observe the solutions in the next time scale and study $\dot x_t^\epsilon ={1\over \epsilon} F(x_t^\epsilon,{t\over \epsilon^2},\omega, \epsilon)$. See R. L. Stratonovich  \cite{Stratonovich-rhs, Stratonovich61}.   
  Suppose for some functions $\bar a_{j,k}$ and $\bar b_j$ the following 
estimates hold uniformly:
 \begin{equation}\label{condition-rate}
 {\begin{split}
 &\left| {1\over \epsilon^3} \int_s^{s+\epsilon}
 \int_{s}^{r_1}
 \E F_j^{(0)}(x,  {r_2\over \epsilon^2}) F^{(0)}_k(x,  {r_1\over \epsilon^2})\, dr_2\; dr_1-\bar a_{j,k}(s,x)\right|\,
\le o(\epsilon),\\
 &\left| {1\over \epsilon^3} \int_s^{s+\epsilon}
 \int_{s}^{r_1}
\sum_{k=1}^d \E {\partial F_j^{(0)}\over \partial x_k}( x,  {r_2\over \epsilon^2}) F_k^{(0)}( x,  {r_1\over \epsilon^2})\, dr_2\; dr_1
 +{1\over \epsilon} \int_s^{s+\epsilon} \E F_j^{(1)} ( x,  {r\over \epsilon^2}) \,dr-\bar b_j(x, s)\right|\\
& \le o(\epsilon).
 \end{split}}
 \end{equation}
Then under  a `strong mixing' condition with suitable mixing rate,  the solutions  of the equations $\dot x_t^\epsilon ={1\over \epsilon} F(x_t^\epsilon,{t\over \epsilon^2},\omega, \epsilon)$
converge weakly on any compact interval to a Markov process. 
This is a theorem of R. L. Stratonovich \cite{Stratonovich61} and R. Z. Khasminskii\cite{Khasminskii-rhs},
further refined and explored  in Khasminskii \cite{Khasminskii66} and   A. N. Borodin \cite{Borodin77}.   
These theorems lay foundation for investigation beyond ordinary differential equations with a fast oscillating right hand side.

In our case, noise comes into the system via a $\L_0$-diffusion satisfying H\"ormander's conditions, and 
hence we could by pass these assumptions and also obtain convergences in the Wasserstein distances.
For manifold valued stochastic processes, 
some difficulties are caused by the inherited non-linearity. For example, integrating a vector field along a path makes sense only after they are parallel translated back. 
Parallel transports of a vector field along a path, from time $t$ to time $0$, involves the whole path up to time $t$ and introduces extra difficulties;  
this is still an unexplored territory wanting further investigations. For the proof of tightness, the non-linearity
causes particular difficulty if the Riemannian distance function is not smooth. The advantage of working on a manifold setting is that for some specific physical models, the noise can be untwisted and becomes easy to deal with. 

 Our estimates for the rate of convergence, section \ref{section-rate} and \ref{Wasserstein}, can be considered as an extension to that in W. Kohler and G. C. Papanicolaou \cite{Kohler-Papanicolaou74, Kohler-Papanicolaou75}, 
 which were in turn developed from the following sequence of remarkable papers:  
 R. Coghurn and R. Hersh \cite{Cogburn-Hersh},  J.B. Keller and 
G. C. Papanicolaou \cite{Papanicolaou-Keller}, R. Hersh and M. Pinsky  \cite{Hersh-Pinsky72}, R. Hersh and G. C. Papanicolaou \cite{Hersh-Papanicolaou72} and  G. C. Papanicolaou and S.R.S. Varadhan \cite{Papanicolaou-Varadhan73}.
 See also T. Kurtz \cite{Kurtz70} and \cite{Papanicolaou-Stroock-Varadhan77} by
  D. Stroock and S. R. S. Varadhan.
  
The condition $\bar F=0$ needs not hold for this type of scaling and convergence.
 If $F(x, t, \omega, \epsilon)=F^{(0)} (x, \zeta_t(\omega))$, where $\zeta_t$ is a stationary process with values in $\R^m$, and $\bar F^{(0)}=X_H$, the Hamiltonian vector field associated to a function $H\in BC^3( \R^2; \R)$ whose level sets are closed connected curves without intersections, then $H(y_{t\over\epsilon}^\epsilon)$ converge to a Markov process, under suitable mixing and technical assumptions.
See A. N. Borodin and M. Freidlin \cite{Borodin-Freidlin}, also M. Freidlin and M. Weber \cite{Freidlin-Weber} where a first integral replaces the Hamiltonian, and also X.-M. Li \cite{Li-geodesic}  where the value of a map from a manifold to another is preserved by the unperturbed system.

In M. Freidlin and  A. D.Wentzell  \cite{Freidlin-Wentzell}, the following type of central limit theorem is proved:
${1\over \sqrt \epsilon} \left(H(x_s^\epsilon)-H(\bar x_s)\right)$
 converges to a Markov diffusion.  This formulation is not suitable when the conserved quantity  takes value in a non-linear space.

For the interested reader, we  also refer to the following articles on limit theorems, averaging and Homogenization for stochastic equations on manifolds:  
N. Enriquez, J. Franchi, Y. LeJan \cite{Enriquez-Franchi-LeJan}, I.  Gargate, P. Ruffino \cite{Gargate-Ruffino}, 
 N. Ikeda, Y. Ochi \cite{Ikeda-Ochi}, Y. Kifer \cite{Kifer92},
M. Liao and L. Wang \cite{Liao-Wang-05}, S. Manade, Y. Ochi \cite{Manabe-Ochi},
Y. Ogura \cite{Ogura01}, M. Pinsky \cite{Pinsy-parallel}, and R. Sowers \cite{Sowers01}.

\subsection{Further Question.}
(1)  I am grateful to the associate editor for pointing out the paper by C. Liverani and S. Olla \cite{Liverani-Olla}, where random perturbed Harmiltonian system, in the context of weak interacting particle systems, is studied. 
Their system is somewhat related to the completely integrable equation studied in \cite{Li-averaging} leading to a new problem which we now state.  Denote $X_f$ the Hamiltonian vector field on a symplectic manifold corresponding to a function $f$. 
If the symplectic manifold is $\R^{2n}$ with the canonical symplectic form, $X_f$ is the skew gradient of $f$.
Suppose that $\{H_1, \dots, H_n\}$ is a completely integrable system,
i.e. they are poisson commuting at every point and their Hamiltonian vector fields are linearly independent at almost all points.
Following \cite{Li-averaging} we consider a completely integrable SDE perturbed by a transversal Hamiltonian vector field:
$$dy_t^\epsilon = \sum_{i=1}^n X_{H_i}(y_t^\epsilon)\circ dW_t^i+X_{H_0}(y_t^\epsilon)dt+\epsilon X_K(y_t^\epsilon)dt.$$ 
 Suppose that $X_{H_0}$ commutes with $X_{H_k}$ for $k=1,\dots, n$, then each $H_i$ is a first integral of the unperturbed system.
Then, \cite[Th 4.1]{Li-averaging}, within the action angle coordinates of a regular value of the energy function $H=(H_1, \dots, H_n)$,
the energies $\{ H_1(y^\epsilon_{t\over \epsilon^2}), \dots, H_n(y^\epsilon_{t\over \epsilon^2})\}$ converge weakly to a Markov process. When restricted to the level sets of the energies, the fast motions are ellipitic.
It would be desirable to remove the `complete integrability' in favour of Hormander's type conditions.
There is a non-standard symplectic form  on $(\R^4)^N$ with respect to which the vector fields in \cite{Liverani-Olla} are Hamiltonian vector fields and when restricted to level sets of the energies the unperturbed system in \cite{Liverani-Olla}  
satisfies H\"ormander's condition, see \cite[section 5]{Liverani-Olla}, and therefore provides a motivating example for further studies. Finally note that the driving vector fields in (\ref{1}) are in a special form,  results here would not apply to the systems in \cite{Li-averaging} nor that in \cite{Liverani-Olla}, and hence it would be interesting to formulate
 and develop limit theorems for more general random ODEs to include these two cases. 
 
 (2) It should be interesting to develop a theory for the ODEs below \begin{equation}
\label{1}
\dot y_t^\epsilon(\omega)=\sum_{k=1}^m Y_k\left(y_t^\epsilon(\omega)\right)\alpha_k(z_t^\epsilon(\omega), y_t^\epsilon))
\end{equation}
where $\alpha_k$ depends also on the $y^\epsilon$ process.

(3) It would be nice to extend the theory to allow the noise to live in a  non-compact manifold, in which $\L_0$ should be an Ornstein-Uhlenbeck type operator whose drift term would provide for a deformed volume measure.

\medskip

{\it Notation.} 
Throughout this paper $\B_b(M;N)$, $C_K^r(M;N)$, and $BC^r(M;N)$ denote
the set of functions from $M$ to $N$ that are respectively bounded measurable, $C^r$ with compact supports, and bounded $C^r$ with bounded first $r$ derivatives. If $N=\R$ the letter $N$ will be suppressed. Also ${\mathbb L}(V_1;V_2)$ denotes the space of bounded linear maps; $C^r(\Gamma TM)$ denotes $C^r$ vector fields on a manifold $M$.

\section{Examples}
\label{section-Examples} Let  $\{W_t^k\}$ be independent real valued Brownian motions on a given filtered probability space, $\circ$ denote Stratonovich integrals. In the following $H_0$  and  $ A_k$ are smooth vector fields, and $\{A_1, \dots, A_k\}$ is an orthonormal basis at each point of the vertical tangent spaces. To be brief, we do not specify the properties of the vector fields,  instead refer the interested reader to 
\cite{Li-geodesic} for details. For any $\epsilon>0$, the stochastic differential equations
$$du_t^\epsilon=H_0(u_t^\epsilon)dt+{1\over \sqrt \epsilon}\sum_{k=1}^{n(n-1)\over 2} A_k(u_t^\epsilon)\circ dW_t^k$$
 are degenerate and they interpolate between the geodesic equation ($\epsilon=\infty$) and Brownian motions on the fibres ($\epsilon=0$). 
The fast random motion is transmitted to the horizontal direction by the action of the Lie bracket $[H_0, A_k]$.
If $ H_0=0$, there is a conserved quantity to the system which is the projection from the orthonormal bundle to the base manifold.  This allows us to separate the slow variable $(y_t^\epsilon)$ and the fast variable $(z_t^\epsilon)$. The reduced equation
for $(y_t^\epsilon)$, once suitable `coordinate maps' are chosen, can be written in the form of  (\ref{1}). 
In \cite{Li-geodesic} we proved that $(y^\epsilon_{t\over \epsilon})$ converges weakly to a rescaled horizontal Brownian motion. Recently J. Angst, I. Bailleul and C. Tardif  gave this a beautiful treatment, \cite{Angst-Bailleul-Tardif}, using rough path analysis. 
By theorems in this article, the above model can be generalised to include random perturbation by hypoelliptic diffusions, i.e. $\{A_1, \dots, A_k\}$ generates all vertical directions. In \cite{Li-geodesic} we did not know how to obtain a rate for the convergence.    Theorem \ref{rate}, in this article, will apply  and indeed we have an upper bound for the rate of convergence.

As a second example, we consider, on the special orthogonal group $SO(n)$,  the following equations:
\begin{equation}
\label{interpolation}
dg_t^\epsilon={1\over  \sqrt \epsilon}\sum_{k=1}^{n(n-1)\over 2} g_t^\epsilon E_{k}\circ dW_t^k + g_t^\epsilon Y_0dt,
\end{equation}
where $\{E_k\}$ is an orthonormal basis of $\so(n-1)$, as a subspace of $\so(n)$, and $Y_0$ is a skew symmetric matrix orthogonal to $\so(n-1)$. The above equation is closely related to the following set of equations:
$$dg_t=\gamma\sum_{k=1}^{n(n-1)\over 2} g_t E_{k}\circ dW_t^k +\delta  g_t Y_0dt,$$
where $\gamma, \delta$ are two positive numbers. If $\delta=0$ and $\gamma=1$, the solutions are
Brownian motions on $SO(n-1)$. If $\delta={1\over |Y_0|}$ and $\gamma=0$, the solutions are
unit speed geodesics on $SO(n)$. These equations interpolate between a Brownian motion on the sub-group $SO(n-1)$ and a one parameter family of subgroup on $SO(n)$. See  \cite{Li-homogeneous}.
Take $\delta=1$ and let $\gamma={1\over \sqrt \epsilon}\to \infty$,  what could be the `effective limit' if it exists?
The slow components of the solutions, which we denote by $(u_t^\epsilon)$,
satisfy equations of the form (\ref{1}). They are `horizontal lifts' of the projections of the solutions to $S^n$.
 If $\m$ is the orthogonal complement of $\so(n-1)$ in  $\so(n)$ then $\m$ is $\Ad_H$-irreducible and $\Ad_H$-invariant, noise is transmitted from $\h$ to every direction in $\m$, and this in the uniform way. It is therefore plausible that $u_{t\over \epsilon}^\epsilon$ can be approximated  by a diffusion $\bar u_t$ of constant rank. The projection of $u_t$ to $S^n$ is a scaled Brownian motion with scale $\lambda$.   The scale $\lambda$ is a function of the dimension $n$, but is independent of $Y_0$ and is associated to an eigenvalue of the Laplacian on $SO(n-1)$, indicating the speed of propagation.

As a third example we consider the Hopf fibration $\pi: S^3\to S^2$. Let $\{X_1, X_2, X_3\}$ be the Pauli matrices, they form an orthonormal basis of $\su(2)$ with respect to the canonical bi-invariant Riemannian metric. $$X_1=\left(\begin{matrix} i &0\\0&-i
\end{matrix}\right), \quad X_2=\left(\begin{matrix} 0 &1\\-1&0
\end{matrix}\right),
 \quad X_3=\left(\begin{matrix} 0 &i\\i&0
\end{matrix}\right).
$$
Denote $X^*$ the left invariant vector field generated by $X\in \su(2)$. 
By declaring $\{{1\over \sqrt \epsilon}X_1^*, X_2^*, X_3^*\}$ an orthonormal frame, we obtain a family of
left invariant Riemannian metrics $m^\epsilon$ on $S^3$. The Berger's spheres,  $(S^3, m^\epsilon)$,  converge in measured Gromov-Hausdorff
topology to the lower dimensional sphere $S^2({1\over 2})$. For further discussions see K. Fukaya \cite{Fukaya} and
J. Cheeger and M. Gromov \cite{Cheeger-Gromov}.
Let $W_t$ be a one dimensional Brownian motion and take $Y$ from $\m:=\<X_2, X_3\>$.
 The infinitesimal generator of the equation $
dg_t^\epsilon={1\over \sqrt \epsilon} X_1^*(g_t^\epsilon)\circ dW_t+Y^*(g_t^\epsilon) \;dt$
satisfies weak H\"ormander's conditions. The `slow motions', suitably sacled, converge
to a `horizontal' Brownian motion whose generator is ${1\over 2}c\trace_{\m}\nabla d$, where
the trace is taken in $\m$.  A slightly different, ad hoc, example on the Hopf fibration is discussed in  \cite{LI-OM-1}. 
An analogous equations can be considered on $SU(n)$ where the diffusion coefficients come from a maximal torus.

Finally we give an example where the noise $(z_t^\epsilon)$ in the reduced equation is not elliptic. 
Let $M=SO(4)$ and let $E_{i,j}$ be the elementary $4\times 4$ matrices and $A_{i,j}={1\over \sqrt 2} (E_{ij}-E_{ji})$.
For $k=1,2$ and $3$, we consider the equations
$$dg_t^\epsilon={1\over \sqrt\epsilon}  A_{1,2}^*(g_t^\epsilon)\circ db_t^1+
{1\over \sqrt\epsilon}  A_{1,3}^*(g_t^\epsilon)\circ db_t^2+A_{k4}^*(g_t^\epsilon)dt.$$
The slow components of the solutions of these equations again satisfy an equation of the form (\ref{1}).

 \section{Preliminary Estimates}
\label{section-estimates}

Let  $\L_0$ be a diffusion operator on a manifold $G$ and $Q_t$ its transition semigroup and transition probabilities.  
Let $\|\cdot \|_{TV}$ denote the total variation norm of a measure, normalized so that the total variation norm between two probability measures is less or equal to $2$. By the duality formulation for the total variation norm,
$$\|\mu\|_{TV}=\sup_{|f| \le 1, f\in \B_b(G;\R)} \left|\int_G fd\mu \right|.$$
For $W\in \B( G; [1,\infty))$ denote $\|f\|_W$ the weighted supremum norm
and $\|\mu\|_{TV,W}$ the weighted total variation norm:
$$\|f\|_W=\sup_{x\in G}{|f(x)|\over W(x)}, \quad 
\|\mu\|_{TV, W}= \sup_{ \{ \|f\|_W\le 1\}} \left| \int_G f d\mu\right| .
$$
\begin{assumption}
\label{assumption1}
There is an invariant probability measure $\pi$ for $\L_0$, a real valued function $W\in L^1(G, \pi)$ with $W\ge 1$,  numbers $\delta>0$ and $a>0$ such that 
$$\sup_{x\in G}{\|Q_t(x,\cdot)-\pi\|_{TV, W}\over W(x)} \le ae^{-\delta t}.$$
\end{assumption}
 If $G$ is compact we take $W\equiv 1$.
 
  In the following lemma we collect some elementary estimates, which will be used to prove Lemma \ref{lemma3}
  and \ref{lemma3-2}, for completeness their proofs are given in the appendix. Write $\bar W=\int_G W d\pi$.
\begin{lemma}
\label{lemma2}
Assume Assumption \ref{assumption1}. 
Let $f,g:G\to \R$ be bounded measurable functions and let $c_\infty=|f|_\infty\|g\|_W$.
Then the following statements hold  for all $s, t\ge 0$.
\begin{enumerate}
\item [(1)] Let $(z_t)$ be an $\L_0$ diffusion. If $ \int_G gd\pi=0$,
$${\begin{split} &\left|{1\over t-s} \int_s^{t} \int_s^{s_1} \left( \E\left\{ f(z_{s_2}) g(z_{s_1}) \Big| \F_s\right\}
-\int_G fQ_{s_1-s_2}g d\pi\right) ds_2ds_1\right|\\
&\le {a^2c_\infty\over (t-s)\delta^2}W(z_s).\end{split}}$$

\item [(2)]  Let $(z_t)$ be an $\L_0$ diffusion. If  $\int_G gd \pi=0$ then 
$${ \begin{split}
&\left|{1\over t-s} \int_s^{t} \int_s^{s_1}  \E\left\{ f(z_{s_2}) g(z_{s_1}) \Big| \F_s\right\} \d s_2 \d s_1
-  \int_G\int_0^\infty f Q_rg \d r \d\pi
 \right|\\
&\le {c_\infty  \over (t-s) \delta^2} (a^2 W(z_s)+a \bar W )+{c_\infty a\over \delta} \bar W.
\end{split} } $$

\item [(3)]
Suppose that either $\int_G f\d\pi =0$ or $\int_G g\d\pi =0$. Let
$$C_1={a\over \delta^2}
(aW+\bar W)|f|_\infty\|g\|_W, \quad 
C_2={2a\over \delta}|f|_\infty\|g\|_W\bar W
+{a \over \delta}|\bar g| \; \|f\|_W W.$$
 Let $(z_t^\epsilon)$ be an $\L_0^\epsilon$ diffusion.  Then  for every $\epsilon>0$,
 $$\left| \int_{s\over \epsilon}^{t\over \epsilon} \int_{s\over \epsilon}^{s_1} 
  \E\left\{ f(z^\epsilon_{s_2}) g(z^\epsilon_{s_1}) \Big| \F_{s\over \epsilon}\right\}\d s_2\d s_1 \right|
 \le C_1(z_{s\over \epsilon}^\epsilon) \epsilon^2+C_2(z_{s\over \epsilon}^\epsilon) (t-s).$$
\end{enumerate}
\end{lemma}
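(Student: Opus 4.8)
The plan is to derive all three statements from Assumption \ref{assumption1}, which is the only hypothesis in force, by systematically reducing the double time-integrals to single integrals against the invariant measure $\pi$ and then bounding the discrepancy using the exponential mixing estimate. The organising principle is the Markov property: conditioning at the intermediate time $s_2$ turns the inner structure into $\E\{f(z_{s_2}) (Q_{s_1-s_2}g)(z_{s_2})\}$, after which one more conditioning at $s$ brings in $Q_{s_2-s}(x,\cdot)$, which by Assumption \ref{assumption1} is within $a e^{-\delta(s_2-s)}W(z_s)$ of $\pi$ in the $\|\cdot\|_{TV,W}$-norm.

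For part (1), I would first use the tower property to write
$$\E\{f(z_{s_2})g(z_{s_1})\mid\F_s\}=\E\{f(z_{s_2})(Q_{s_1-s_2}g)(z_{s_2})\mid\F_s\}=\big(Q_{s_2-s}(f\cdot Q_{s_1-s_2}g)\big)(z_s).$$
Since $\int_G g\,d\pi=0$ and $\pi$ is invariant, $\int_G Q_{s_1-s_2}g\,d\pi=0$ as well, so $\int_G f\cdot Q_{s_1-s_2}g\,d\pi=\int_G fQ_{s_1-s_2}g\,d\pi$ is the term being subtracted. Then the integrand of the double integral equals $\big(Q_{s_2-s}(f\cdot Q_{s_1-s_2}g)\big)(z_s)-\int_G f\cdot Q_{s_1-s_2}g\,d\pi$, which by Assumption \ref{assumption1} is bounded by $a e^{-\delta(s_2-s)}W(z_s)\,\|f\cdot Q_{s_1-s_2}g\|_W$. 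The key estimate is $\|f\cdot Q_{s_1-s_2}g\|_W\le |f|_\infty\,\|Q_{s_1-s_2}g\|_W$; and since $\int_G g\,d\pi=0$, $\|Q_{s_1-s_2}g\|_W\le\|Q_{s_1-s_2}g-\int g\,d\pi\|_{?}$ — more precisely, applying the duality bound one gets $|Q_{s_1-s_2}g(x)|\le \|Q_{s_1-s_2}(x,\cdot)-\pi\|_{TV,W}\,\|g\|_W\le a e^{-\delta(s_1-s_2)}W(x)\,\|g\|_W$, hence $\|Q_{s_1-s_2}g\|_W\le a e^{-\delta(s_1-s_2)}\|g\|_W$ (using $W\ge1$). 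So the integrand is bounded by $a^2 c_\infty e^{-\delta(s_2-s)}e^{-\delta(s_1-s_2)}W(z_s)=a^2 c_\infty e^{-\delta(s_1-s)}W(z_s)$. Integrating $\int_s^t\int_s^{s_1}e^{-\delta(s_1-s)}\,ds_2\,ds_1$: the inner integral over $s_2\in[s,s_1]$ contributes a factor $(s_1-s)$, and $\int_s^t (s_1-s)e^{-\delta(s_1-s)}\,ds_1\le\int_0^\infty u e^{-\delta u}\,du=1/\delta^2$; dividing by $t-s$ gives the claimed bound $a^2 c_\infty W(z_s)/((t-s)\delta^2)$.

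For part (2), the quantity $\int_G\int_0^\infty fQ_rg\,dr\,d\pi$ replaces $\int_G fQ_{s_1-s_2}g\,d\pi$ integrated over the time region; I would split the difference into (i) the part controlled exactly as in (1) — bounded by $a^2c_\infty W(z_s)/((t-s)\delta^2)$ — plus (ii) the error from replacing the finite double integral of $\int_G fQ_{s_1-s_2}g\,d\pi$ over $\{s\le s_2\le s_1\le t\}$ by $(t-s)\int_G\int_0^\infty fQ_rg\,dr\,d\pi$. For (ii), after the change of variables $r=s_1-s_2$ one compares $\frac{1}{t-s}\int_s^t\int_0^{s_1-s}(\text{stuff})\,dr\,ds_1$ with $\int_0^\infty(\text{stuff})\,dr$; the tail $\int_{t-s}^\infty$ and the boundary layer near $s_1=t$ each produce an $O(1/(t-s))$ term, using $|\int_G fQ_rg\,d\pi|\le a c_\infty \bar W e^{-\delta r}$ (from the same mixing bound, now integrating $W$ against $\pi$ to get $\bar W$). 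Collecting constants yields the stated bound $\frac{c_\infty}{(t-s)\delta^2}(a^2 W(z_s)+a\bar W)+\frac{c_\infty a}{\delta}\bar W$, where the last non-decaying term absorbs a term I will not optimise.

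Part (3) is the non-centered, rescaled version: now $(z_t^\epsilon)$ has generator $\frac1\epsilon\L_0$, so $z^\epsilon_t\overset{d}{=}z_{t/\epsilon}$ and its transition semigroup is $Q_{t/\epsilon}$. I would write $g=\bar g\cdot\1+ (g-\bar g)$ if $\int_G g\,d\pi=\bar g\ne0$ (symmetrically for $f$), apply part (2)-type bounds to the centered part and a direct $L^\infty$-in-one-slot bound to the $\bar g$ part: the term $\bar g\int_G f\,Q_r\1\,d\pi$-type contributions use $\|Q_{s_2-s}f - \int f d\pi \cdots\|$ when $\int f\,d\pi=0$, or the $\|f\|_W$-weighted mixing bound otherwise, producing the $\frac a\delta|\bar g|\,\|f\|_W W$ piece. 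After substituting $s_i\mapsto s_i/\epsilon$ the domain becomes $\{s/\epsilon\le s_2\le s_1\le t/\epsilon\}$, and the time-integrated exponential $\int e^{-\delta(\cdot/\epsilon)}$ produces factors of $\epsilon$: the ``transient'' contribution scales like $\epsilon^2$ (yielding $C_1(z^\epsilon_{s/\epsilon})\epsilon^2$) and the ``stationary''/linear-in-length contribution scales like the interval length $(t-s)$ (yielding $C_2(z^\epsilon_{s/\epsilon})(t-s)$), where $C_1,C_2$ are exactly the combinations displayed. The main obstacle throughout is purely bookkeeping: keeping the two exponential-decay factors straight (one from $Q_{s_2-s}$ acting on the centered function $f\cdot Q_{s_1-s_2}g$, one from $Q_{s_1-s_2}g$ itself), correctly separating which slot carries the mean-zero property, and controlling the boundary-layer terms in (2) and (3) — there is no conceptual difficulty once the Markov-property reduction is in place. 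Since the excerpt says these proofs are deferred to the appendix, I would present exactly this reduction and defer the routine integral estimates.
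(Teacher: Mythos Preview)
Your proposal is correct and follows essentially the same route as the paper's appendix proof: the Markov-property reduction $\E\{f(z_{s_2})g(z_{s_1})\mid\F_s\}=\big(Q_{s_2-s}(f\,Q_{s_1-s_2}g)\big)(z_s)$, the two-factor exponential bound $a^2c_\infty e^{-\delta(s_1-s)}W(z_s)$ for part (1), the change of variables $r=s_1-s_2$ with tail and $r$-weighted error terms for part (2), and the splitting $g=(g-\bar g)+\bar g$ together with the $\epsilon$-rescaling $Q_r^\epsilon=Q_{r/\epsilon}$ for part (3). The only cosmetic difference is that the paper organises part (3) by first adding and subtracting $\int_G\int_0^\infty fQ_r^\epsilon g\,dr\,d\pi$ and invoking the $\epsilon$-scaled version of (\ref{estimate-1-1}) wholesale, whereas you describe the same effect as ``transient'' versus ``stationary'' contributions; the bookkeeping and the resulting constants $C_1,C_2$ are identical.
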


To put Assumption \ref{assumption1} into context, we consider H\"ormander type operators.
Let $L_{X}$ denote Lie differentiation in the direction of a vector field 
 $X$ and $[X, Y]$  the Lie bracket of two vector fields $X$ and $Y$.
Let $\{X_i, i=0, 1,\dots, m'\}$ be a family of smooth vector fields on a compact smooth manifold 
$G$ and  $\L_0={1\over 2}\sum_{i=1}^{m'}L_{X_i}L_{X_i}+L_{X_0}$.
If $\{X_i, i=1, \dots, m'\}$ and their Lie brackets generate the tangent space $T_xG$ at each point $x$ we say that the operator $\L_0$ satisfies the strong H\"ormander's condition. 
\begin{lemma}
\label{lemma1}
Suppose that $\L_0$ satisfies the strong H\"ormander condition on a compact manifold $G$ and
let $Q_t(x,\cdot)$ be its family of  transition probabilities. Then Assumption \ref{assumption1} holds with $W$ identically $1$. Furthermore the invariant probability measure $\pi$ 
has a strictly positive smooth density w.r.t. the Lebesgue measure and 
$$\|Q_t(x,\cdot)-\pi(\cdot)\|_{TV} \le Ce^{-\delta t}$$
for all $x$ in $G$ and for all $t>0$.
\end{lemma}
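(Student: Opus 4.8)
The plan is to derive all three assertions from hypoellipticity together with a Doeblin-type minorisation, the point being that when $W\equiv 1$ the weighted total variation norm $\|\cdot\|_{TV,W}$ coincides with $\|\cdot\|_{TV}$, so Assumption \ref{assumption1} reduces exactly to the exponential mixing estimate $\|Q_t(x,\cdot)-\pi\|_{TV}\le Ce^{-\delta t}$ claimed at the end of the statement; hence it suffices to produce $\pi$ with the stated regularity and to prove this bound. First I would invoke H\"ormander's theorem: since $\{X_1,\dots,X_{m'}\}$ and their iterated Lie brackets span $T_xG$ at every $x$, the operator $\partial_t-\L_0$ is hypoelliptic on $(0,\infty)\times G$, so for each $t>0$ the measure $Q_t(x,\cdot)$ has a density $q_t(x,y)$ with respect to a fixed smooth volume form on $G$, and $(t,x,y)\mapsto q_t(x,y)$ is smooth on $(0,\infty)\times G\times G$. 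Then the Stroock--Varadhan support theorem identifies the topological support of $Q_t(x,\cdot)$ with the closure of the set of time-$t$ endpoints of the control system $\dot\gamma_s=X_0(\gamma_s)+\sum_{i=1}^{m'}u_i(s)X_i(\gamma_s)$; the bracket-generating hypothesis together with compactness and connectedness of $G$ makes this system controllable, so the support is all of $G$, whence $q_t(x,y)>0$ for every $x,y\in G$ and every $t>0$.

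Next I would produce the invariant measure. Compactness of $G$ makes $\{Q_t(x,\cdot):t\ge 1\}$ tight, so Krylov--Bogolyubov gives an invariant probability measure $\pi$. From $\pi=\pi Q_1$ and the previous step, $\pi(dy)=\bigl(\int_G q_1(x,y)\,\pi(dx)\bigr)\,\vol(dy)$, which has a smooth, strictly positive density; and positivity of $q_t$ makes the semigroup irreducible, forcing uniqueness of $\pi$.

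For the exponential mixing, fix $t_0>0$. Since $q_{t_0}$ is continuous and strictly positive on the compact set $G\times G$, the quantity $\eta:=\vol(G)\,\inf_{x,y\in G}q_{t_0}(x,y)$ lies in $(0,1]$, and $Q_{t_0}(x,\cdot)\ge\eta\,\nu$ for all $x$, where $\nu$ is the normalised volume — a uniform Doeblin condition. A standard coupling argument then yields $\|Q_{t_0}(x,\cdot)-Q_{t_0}(x',\cdot)\|_{TV}\le 2(1-\eta)$ for all $x,x'$; integrating against $\pi$ and iterating via the Markov property gives $\|Q_{kt_0}(x,\cdot)-\pi\|_{TV}\le 2(1-\eta)^k$. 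For general $t$, writing $t=kt_0+s$ with $0\le s<t_0$ and using $\|Q_t(x,\cdot)-\pi\|_{TV}=\bigl\|\int_G Q_s(x,dz)\bigl(Q_{kt_0}(z,\cdot)-\pi\bigr)\bigr\|_{TV}\le\sup_z\|Q_{kt_0}(z,\cdot)-\pi\|_{TV}$, one obtains $\|Q_t(x,\cdot)-\pi\|_{TV}\le 2(1-\eta)^{t/t_0-1}=Ce^{-\delta t}$ with $\delta=t_0^{-1}\log\frac{1}{1-\eta}$, uniformly in $x$. This is the stated bound, and it verifies Assumption \ref{assumption1} with $W\equiv 1$.

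The only genuinely non-routine ingredient is the uniform lower bound $\inf_{x,y}q_{t_0}(x,y)>0$: pointwise strict positivity of the hypoelliptic kernel rests on the support theorem, i.e. on controllability of the bracket-generating control system on the connected compact manifold $G$, and the passage from pointwise to uniform positivity then uses only continuity of $q_{t_0}$ and compactness of $G\times G$. Everything else — H\"ormander hypoellipticity, Krylov--Bogolyubov, the Doeblin coupling contraction, and the interpolation to non-lattice times — is classical.
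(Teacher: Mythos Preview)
Your proof is correct and follows essentially the same route as the paper: H\"ormander hypoellipticity gives a smooth transition density, strict positivity of the kernel yields a uniform Doeblin minorisation on the compact manifold, and exponential ergodicity follows. The only cosmetic difference is that the paper cites Bony and S\'anchez-Calle directly for strict positivity of $q_t(x,y)$, whereas you derive it from the Stroock--Varadhan support theorem and controllability of the bracket-generating system; both are standard, and your version is if anything more self-contained, since you also spell out the Krylov--Bogolyubov step and the coupling iteration that the paper leaves implicit.
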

\begin{proof}
By H\"ormander's theorem there are smooth functions $q_t(x,y)$ such that
$Q_t(x,dy)=q_t(x,y)dy$. Furthermore  $q_t(x,y)$ is strictly positive, see J.-M. Bony \cite{Bony69} and A. Sanchez-Calle \cite{Sanchez-Calle78}. Let
$a=\inf_{x,y\in M}q_t(x,y)>0$. Thus D\"oeblin's condition holds: if
$\vol(A)$ denotes the volume of a Borel set $A$,
$Q_t(x, A)\ge a \,\vol(A)$.
\end{proof}

We say that $W$ is a $C^3$ Lyapunov function for the ergodicity problem if there are constant $c\not =0$ and $C>0$ s.t. 
$\L_0 W\le C-c^2 W$.   If such a function exists, the $\L_0^\epsilon$ diffusions are conservative.
Suppose that the Lyapunov function $V$ satisfies in addition the following conditions:
there exists a number $\alpha\in (0,1)$ and $t_0>0$ s.t. for every $R>0$, 
$$\sup_{\{(x,y): V(x)+V(y)\le R\}} \|Q_{t_0}(x, \cdot)-Q_{t_0}(y, \cdot)\|_{TV} \le 2(1-\alpha),$$
Then there exists a unique invariant measure $\pi$ such that Assumption \ref{assumption1} holds,
see e.g.  M. Hairer and J. Mattingly \cite{Hairer-Mattingly08}. 
We mention the following standard estimates which helps to understand the estimates in Lemma \ref{lyapunov-ergodicity}.
\begin{lemma}
\label{lyapunov-ergodicity}
Let $W$ be a $C^3$ Lyapunov function for the ergodicity problem of $\L_0$,
 $ \E W(z_0^\epsilon)$ 
is uniformly bounded in $\epsilon$ for $\epsilon$ sufficiently small.  Then there exist  numbers $\epsilon_0>0$ and  $c$ s.t.
for all $t>0$,
$$\sup_{s\le t}\sup_{\epsilon \le \epsilon_0}\E W(z_{s\over \epsilon}^\epsilon) \le c.$$
\end{lemma}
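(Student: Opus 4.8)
The plan is to run the elementary Lyapunov/Dynkin argument for $W$, but at the rescaled time $s/\epsilon$, and to exploit that the dissipation rate $c^2$ in the Lyapunov inequality $\L_0 W\le C-c^2W$ gets multiplied by $1/\epsilon$ along the $\L_0^\epsilon$-diffusion — this is exactly what forces the resulting bound to be uniform both in $t$ and in $\epsilon$.

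Concretely, I would apply It\^o's (Dynkin's) formula to the process $s\mapsto e^{c^2 s/\epsilon}\,W(z_s^\epsilon)$, whose generator is $\frac1\epsilon\L_0$. Its drift equals $e^{c^2 s/\epsilon}\bigl(\frac{c^2}{\epsilon}W(z_s^\epsilon)+\frac1\epsilon(\L_0W)(z_s^\epsilon)\bigr)$, which by the Lyapunov bound is at most $\frac{C}{\epsilon}e^{c^2 s/\epsilon}$. Since $W$ is only $C^3$ and a priori unbounded (and $G$ need not be compact in this lemma), I would first localise along stopping times $\tau_n=\inf\{s:z_s^\epsilon\notin K_n\}$ for an exhaustion $(K_n)$ of $G$ by compacts, so that $\nabla W$ and the diffusion coefficients are bounded on $[0,t\wedge\tau_n]$ and the stopped martingale part is a genuine (bounded) martingale; taking expectations of the stopped identity gives
$$\E\bigl[e^{c^2(t\wedge\tau_n)/\epsilon}W(z^\epsilon_{t\wedge\tau_n})\bigr]\le \E W(z_0^\epsilon)+\frac{C}{\epsilon}\int_0^{t}e^{c^2 s/\epsilon}\,ds=\E W(z_0^\epsilon)+\frac{C}{c^2}\bigl(e^{c^2 t/\epsilon}-1\bigr).$$
Because $W$ generates a conservative diffusion, $t\wedge\tau_n\uparrow t$ a.s.; letting $n\to\infty$ and applying Fatou's lemma on the left (all terms are $\ge 0$ since $W\ge1$) yields $\E W(z_t^\epsilon)\le e^{-c^2 t/\epsilon}\E W(z_0^\epsilon)+\frac{C}{c^2}\bigl(1-e^{-c^2 t/\epsilon}\bigr)$, hence the clean bound
$$\E W(z_t^\epsilon)\le \E W(z_0^\epsilon)+\frac{C}{c^2}\qquad\text{for every }t\ge0\text{ and every }\epsilon>0 .$$

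To conclude, I would invoke the hypothesis that $M:=\sup_{\epsilon\le\epsilon_0}\E W(z_0^\epsilon)<\infty$: applying the last display with $s/\epsilon$ in place of $t$ gives $\E W(z_{s/\epsilon}^\epsilon)\le M+C/c^2$ for all $s\ge0$ and all $\epsilon\le\epsilon_0$, and taking the supremum over $s\le t$ and $\epsilon\le\epsilon_0$ gives the statement with a constant $M+C/c^2$ that is manifestly independent of $t$. The only genuinely delicate point is the localisation and the passage to the limit in Dynkin's formula for the unbounded $C^3$ function $W$ — one must pick the $\tau_n$ so as to control the local-martingale term and must use non-explosion to take $n\to\infty$. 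When $G$ is compact (the standing assumption in most of the paper) this difficulty disappears: $W$ and $\L_0W$ are then bounded, one may differentiate $\E W(z_t^\epsilon)$ directly to get $\frac{d}{dt}\E W(z_t^\epsilon)\le\frac1\epsilon\bigl(C-c^2\E W(z_t^\epsilon)\bigr)$, and the conclusion follows from Gronwall's lemma.
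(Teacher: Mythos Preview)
Your proof is correct and follows essentially the same approach as the paper: localise, apply Dynkin's formula to $W$ along the $\L_0^\epsilon$-diffusion, use the Lyapunov inequality $\L_0 W\le C-c^2W$, and integrate the resulting differential inequality. Your version is in fact more carefully written---you make the exponential-weight/Gronwall step explicit and obtain the clean uniform bound $\E W(z^\epsilon_{s/\epsilon})\le \E W(z_0^\epsilon)+C/c^2$, whereas the paper records the slightly different (and less transparent) estimate $\E W(z_{s/\epsilon}^\epsilon)\le(\E W(z_0^\epsilon)+Ct/\epsilon)e^{-c^2 t/\epsilon}$---but the underlying argument is the same.
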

\begin{proof}
By localizing $(z_t^\epsilon)$ if necessary, we see that 
$W(z_t^\epsilon)-W(z_0^\epsilon)-{1\over \epsilon} \int_0^t \L_0 W (z^\epsilon_r) dr$
is a martingale. Let $c\not =0$ and $C>0$ be constant s.t. 
$\L_0 W\le C-c^2 W$.
Then
$ \E W(z_{s\over \epsilon}^\epsilon) \le \left (\E W(z_0^\epsilon)+{1\over \epsilon}Ct\right) e^{-{c^2\over \epsilon} t}$.
\end{proof}

As an application we see that, under the assumption of Lemma \ref{lyapunov-ergodicity}, the functions $C_i$ in part (3) of Lemma \ref{lemma2}
satisfy that $\sup_{\epsilon \le \epsilon_0}\E C_i(z_{s\over \epsilon}^\epsilon) <\infty$.

  \begin{definition}\label{definition-complete}
We say that a stochastic differential equation (SDE) on $M$ is {\it complete} or {\it conservative} if for each initial point $y\in M$ any solution with initial value $y$ exists for all  $t\ge 0$. Let $\Phi_t(x)$ be its solution starting from $x$. 
 The SDE is {\it strongly complete} if it has a unique strong solution and that $(t,x)\mapsto \Phi_t(x, \omega)$ is continuous for a.s. $\omega$.
\end{definition}

From now on, by a solution we always mean a globally defined solution.  
   For $\epsilon\in (0,1)$ we define $\L_0^\epsilon={1\over \epsilon } \L_0$. Let  $Q_t^\epsilon$ denote their transition semigroups and transition probabilities.  
 For each $\epsilon>0$, let $(z_t^\epsilon)$ be an  $\L_0^\epsilon$ diffusion. 
Let  $\alpha_k\in \B_b(G;\R)$ and $(y_t^\epsilon)$ solutions to the equations
 \begin{equation}
\label{sde-3}
\dot y_t^\epsilon=\sum_{k=1}^m Y_k(y_t^\epsilon)\alpha_k(z_t^\epsilon).
\end{equation}
Let $\Phi_{s,t}^\epsilon$ be the solution flow to (\ref{sde-3}) with  $\Phi_{s,s}^\epsilon(y)=y$.
We denote by $\bar g$ the average of an integrable function $g: G\to \R$ with respect to $\pi$. Let 
\begin{equation}
\label{cpsi-0}
c_0(a, \delta)={a^2+a\over \delta^2}+{3 a\over\delta}, \quad c_{W}=c(a,\delta)(W+\bar W).
\end{equation}
  
\begin{lemma}
\label{lemma3}
Suppose that  Assumption \ref{assumption1} holds.
 Let $f,g \in \B_b(G; \R)$ and $\bar g=0$.  Suppose that $\alpha_k$ are bounded. Then for any
 $F\in C^1(M; \R)$,  $0\le s \le t$ and $0<\epsilon<1$,
$$\left|\epsilon \int_{s\over \epsilon}^{t\over \epsilon} \int_{s\over \epsilon}^{s_1} 
  \E\left\{ F(y^\epsilon_{s_2}) g(z^\epsilon_{s_2})f(z^\epsilon_{s_1})
 \big| \F_{s\over \epsilon}\right\}  \d s_2 \d s_1\right|\\
\le 2 \gamma_\epsilon |g|_\infty |f|_\infty ( \epsilon^2+ (t-s)^2 ).$$
Here  $$\gamma_\epsilon
= \left( |F(y_{s\over\epsilon}^\epsilon)| \;c_W  ( z_{s\over \epsilon}^\epsilon)
+ \sum_{l=1}^m |\alpha_l|_\infty  
{\epsilon\over t-s}  \int_{s\over \epsilon}^{t\over \epsilon} 
  \E  \left\{  \left|(L_{Y_l} F)(y^\epsilon_r)\right|c_W (z^\epsilon_r)
  \; \big|\;\F_{s\over \epsilon} \right\}  dr
 \right).$$

\end{lemma}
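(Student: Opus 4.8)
The plan is to reduce the claimed double-time-integral estimate to part (3) of Lemma~\ref{lemma2} applied with a frozen value of the slow variable, and to control the error incurred by unfreezing $y^\epsilon$ along the fast time scale. First I would write $F(y^\epsilon_{s_2}) = F(y^\epsilon_{s/\epsilon}) + \int_{s/\epsilon}^{s_2} \frac{d}{dr}F(y^\epsilon_r)\,dr$ and use the equation (\ref{sde-3}) to express $\frac{d}{dr}F(y^\epsilon_r) = \sum_{l=1}^m (L_{Y_l}F)(y^\epsilon_r)\,\alpha_l(z^\epsilon_r)$. Substituting this into the triple expression $\E\{F(y^\epsilon_{s_2}) g(z^\epsilon_{s_2}) f(z^\epsilon_{s_1}) \mid \F_{s/\epsilon}\}$ splits it into two pieces: a ``frozen'' piece $F(y^\epsilon_{s/\epsilon})\,\E\{g(z^\epsilon_{s_2}) f(z^\epsilon_{s_1}) \mid \F_{s/\epsilon}\}$ in which the slow variable is $\F_{s/\epsilon}$-measurable and can be pulled out, and a ``correction'' piece involving the extra $dr$-integral.

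\textbf{Handling the frozen piece.} For the frozen piece, after pulling out $F(y^\epsilon_{s/\epsilon})$ the remaining object is exactly
$\epsilon\, F(y^\epsilon_{s/\epsilon}) \int_{s/\epsilon}^{t/\epsilon}\int_{s/\epsilon}^{s_1} \E\{g(z^\epsilon_{s_2}) f(z^\epsilon_{s_1}) \mid \F_{s/\epsilon}\}\,ds_2\,ds_1$, and since $\bar g = 0$ I would invoke Lemma~\ref{lemma2}(3) (with the roles of $f$ and $g$ matched to its hypotheses, using $\bar g=0$ so the $C_2$ term is clean) to bound the inner double integral by $C_1(z^\epsilon_{s/\epsilon})\epsilon^2 + C_2(z^\epsilon_{s/\epsilon})(t-s)$. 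Tracking the constants from (\ref{cpsi-0}), the quantities $C_1, C_2$ are dominated by $|g|_\infty|f|_\infty c_W(z^\epsilon_{s/\epsilon})$ up to the fixed combination $c_0(a,\delta)$, which yields a contribution of the form $|F(y^\epsilon_{s/\epsilon})|\,c_W(z^\epsilon_{s/\epsilon})\,|g|_\infty|f|_\infty(\epsilon^2 + (t-s))$. Multiplying by the outer $\epsilon$ and noting $\epsilon(t-s) \le (t-s)^2 + \epsilon^2$ (or simply $\epsilon(t-s)\le \epsilon^2+(t-s)^2$ up to a constant $2$) gives the first term of $\gamma_\epsilon$ times $(\epsilon^2 + (t-s)^2)$.

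\textbf{Handling the correction piece.} The correction piece is $\sum_l \int_{s/\epsilon}^{t/\epsilon}\int_{s/\epsilon}^{s_1}\int_{s/\epsilon}^{s_2} \E\{(L_{Y_l}F)(y^\epsilon_r)\alpha_l(z^\epsilon_r) g(z^\epsilon_{s_2}) f(z^\epsilon_{s_1}) \mid \F_{s/\epsilon}\}\,dr\,ds_2\,ds_1$. Here I would bound crudely: $|\alpha_l(z^\epsilon_r)|\le |\alpha_l|_\infty$, $|g|\le |g|_\infty$, $|f|\le |f|_\infty$, and estimate the $dr$-integral against the averaged quantity $\frac{\epsilon}{t-s}\int_{s/\epsilon}^{t/\epsilon}\E\{|(L_{Y_l}F)(y^\epsilon_r)|c_W(z^\epsilon_r)\mid \F_{s/\epsilon}\}\,dr$ that appears in $\gamma_\epsilon$ — the factor $c_W$ being inserted precisely because each application of Lemma~\ref{lemma2}(3)-type reasoning to the $s_2,s_1$ pair costs a $c_W$ weight; the outer $r$-integral over an interval of length $(t-s)/\epsilon$ then contributes the $(t-s)$ needed to form $(t-s)^2$ after combining with the outer $\epsilon$ and the length of the $(s_1,s_2)$ integration region. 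Collecting both pieces and using $2\gamma_\epsilon$ as the common prefactor gives the stated bound. The main obstacle is the bookkeeping: one must route the correction term through the right intermediate estimate so that exactly one power of $c_W$ and the correct powers of $(t-s)$ and $\epsilon$ emerge, and check that the innermost $(s_1,s_2)$ double integral in the correction term is genuinely handled by a mixing-free trivial bound (length$^2$ times sup-norms times $c_W$) rather than needing the cancellation $\bar g=0$ a second time — which is fine since $\bar g = 0$ is already used once and the remaining estimate on the correction is purely a volume bound.
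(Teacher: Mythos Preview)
Your treatment of the frozen piece is correct and matches the paper. The gap is in the correction piece.

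You claim the triple integral
\[
\epsilon \sum_l \int_{s/\epsilon}^{t/\epsilon}\!\int_{s/\epsilon}^{s_1}\!\int_{s/\epsilon}^{s_2} \E\bigl\{(L_{Y_l}F)(y^\epsilon_r)\,\alpha_l(z^\epsilon_r)\, g(z^\epsilon_{s_2}) f(z^\epsilon_{s_1}) \,\big|\, \F_{s/\epsilon}\bigr\}\,dr\,ds_2\,ds_1
\]
can be handled by a ``mixing-free trivial bound (length$^2$ times sup-norms times $c_W$)'' without using $\bar g=0$ a second time. This fails: replacing $g,f$ by their sup norms and integrating over the simplex $\{s/\epsilon \le r \le s_2 \le s_1 \le t/\epsilon\}$ gives a contribution of order $\epsilon \cdot \bigl((t-s)/\epsilon\bigr)^3 = (t-s)^3/\epsilon^2$, which diverges as $\epsilon\to 0$. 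A genuinely mixing-free bound also cannot produce the weight $c_W(z^\epsilon_r)$ that appears in $\gamma_\epsilon$; that weight comes \emph{from} the mixing estimate, so its presence in the answer should have signalled to you that cancellation is being used again.

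The paper's route for the correction piece is: apply Fubini to make $r$ the outermost variable, use the tower property to condition on $\F_r$, and then apply Lemma~\ref{lemma2}(3) a \emph{second} time (with $\bar g=0$) to the inner double integral
\[
\int_{r}^{t/\epsilon}\!\int_{r}^{s_1} \E\bigl\{ g(z^\epsilon_{s_2}) f(z^\epsilon_{s_1}) \,\big|\, \F_{r}\bigr\}\,ds_2\,ds_1,
\]
yielding a bound of order $c_W(z^\epsilon_r)\,|f|_\infty|g|_\infty\,(\epsilon^2 + (t-\epsilon r))$. Integrating this against $|(L_{Y_l}F)(y^\epsilon_r)|\,|\alpha_l|_\infty$ over $r\in[s/\epsilon,t/\epsilon]$ and multiplying by the outer $\epsilon$ then gives $(t-s)(\epsilon^2+(t-s))\le 2(\epsilon^2+(t-s)^2)$ with the correct $\gamma_\epsilon$ prefactor. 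This second use of $\bar g=0$ is not optional; it is exactly what converts a divergent volume bound into the stated estimate.
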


\begin{proof}
We first expand $F(y_{s_2}^\epsilon)$ at ${s \over \epsilon}$: 
$${ \begin{split}
&\epsilon \int_{s\over \epsilon}^{t\over \epsilon} \int_{s\over \epsilon}^{s_1} 
  \E\left\{ F(y^\epsilon_{s_2}) g(z^\epsilon_{s_2})f(z^\epsilon_{s_1})
 \big| \F_{s\over \epsilon}\right\}  \d s_2 \d s_1
 =\epsilon F(y^\epsilon_{s\over \epsilon}) \int_{s\over \epsilon}^{t\over \epsilon} \int_{s\over \epsilon}^{s_1} 
  \E\left\{  g(z^\epsilon_{s_2})f(z^\epsilon_{s_1})
 \big| \F_{s\over \epsilon}\right\}  \d s_2 \d s_1\\
 &+\sum_{l=1}^m\epsilon \int_{s\over \epsilon}^{t\over \epsilon} \int_{s\over\epsilon}^{s_1}\int_{s\over \epsilon}^{s_2} 
  \E\left\{ (dF)(Y_l(y^\epsilon_{s_3})) \alpha_l(z_{s_3}^\epsilon) g(z^\epsilon_{s_2})f(z^\epsilon_{s_1})
 \big| \F_{s\over \epsilon}\right\} \d s_3 \d s_2 \d s_1
\end{split} } $$
By  part (3) of lemma \ref{lemma2}
$$ \left|\epsilon  F(y^\epsilon_{s\over \epsilon}) \int_{s\over \epsilon}^{t\over \epsilon} \int_{s\over \epsilon}^{s_1} 
  \E\left\{  g(z^\epsilon_{s_2})f(z^\epsilon_{s_1})
 \big| \F_{s\over \epsilon}\right\}  \d s_2 \d s_1\right|\le   |F(y_{s\over \epsilon}^\epsilon)||f|_\infty|g|_\infty
 c_W (z^\epsilon_{s\over \epsilon}) \left(\epsilon^3+(t-s)\epsilon\right).$$
 It remain to estimate the summands in the second term, whose absolute value is bounded by the following
$${ \begin{split}
&A_l:= \left|\epsilon\int_{s\over \epsilon}^{t\over \epsilon} \int_{s\over\epsilon}^{s_1}\int_{s\over \epsilon}^{s_2}
\E \left\{ (dF)(Y_l(y^\epsilon_{s_3})) \alpha_l(z_{s_3}^\epsilon) g(z^\epsilon_{s_2})f(z^\epsilon_{s_1})
 \big| \F_{s\over \epsilon}\right\} \d s_3 \d s_2 \d s_1\right|
\\
&=\left| \epsilon \int_{s\over \epsilon}^{t\over \epsilon} 
\E \left\{(dF)(Y_l(y^\epsilon_{s_3})) \alpha_l(z_{s_3}^\epsilon)
\int_{s_3}^{t\over\epsilon}\int_{s_2}^{t\over \epsilon}
  \E\left\{  g(z^\epsilon_{s_2})f(z^\epsilon_{s_1})
 \big| \F_{s_3}\right\}  \d s_1 \d s_2 \Big|\F_{s\over \epsilon} \right\} ds_3 \right|.
\end{split} } $$
For $s_3\in [{s\over \epsilon}, {t\over \epsilon}]$, we apply part (3) of lemma \ref{lemma2}
to bound the inner iterated integral,
$${ \begin{split}
 &\left| \int_{s_3}^{t\over\epsilon}\int_{s_2}^{t\over \epsilon}
  \E\left\{  g(z^\epsilon_{s_2})f(z^\epsilon_{s_1})
 \big| \F_{s_3}\right\}  \d s_1 \d s_2\right|
 =\left |  \int_{s_3}^{t\over\epsilon}\int_{s_3}^{s_1}
  \E\left\{  g(z^\epsilon_{s_2})f(z^\epsilon_{s_1})
 \big| \F_{s_3}\right\}  \d s_2 \d s_1\right|\\
& \le \left(   \epsilon^2+ t-\epsilon s_3 \right)  c_W (z^\epsilon_{s_3})  |f|_\infty |g|_\infty.
\end{split} } $$
We bring this back to the previous line,  the notation $L_{Y_l}F=dF(Y_l)$ will be used,
$${ \begin{split}
&A_l\le \epsilon
  \int_{s\over \epsilon}^{t\over \epsilon} 
  \E  \left\{ \left|  (dF)(Y_l(y^\epsilon_{s_3})) c_W (z^\epsilon_{s_3}) \alpha_l(z_{s_3}^\epsilon) \Big|\F_{s\over \epsilon} \right\} \right| 
\left(  \epsilon^2+ (t-\epsilon s_3) \right)  |f|_\infty |g|_\infty \d s_3\\
&\le   |f|_\infty |g|_\infty |\alpha_l|_\infty (t-s)  (\epsilon^2+(t-s))
{\epsilon\over t-s}  \int_{s\over \epsilon}^{t\over \epsilon} 
  \E  \left\{  \left|(L_{Y_l} F)(y^\epsilon_{s_3})\right|c_W (z^\epsilon_{s_3})\Big|\F_{s\over \epsilon} \right\}  ds_3.
\end{split} } $$
Putting everything together we see that, for $\gamma_\epsilon$ given in the Lemma, $\epsilon<1$,
 $${ \begin{split}
&\left|
\epsilon \int_{s\over \epsilon}^{t\over \epsilon} \int_{s\over \epsilon}^{s_1} 
  \E\left\{ F(y^\epsilon_{s_2}) g(z^\epsilon_{s_2})f(z^\epsilon_{s_1})
 \big| \F_{s\over \epsilon}\right\}  \d s_2 \d s_1\right|
 \le 2\gamma_\epsilon    |g|_\infty |f|_\infty \left( \epsilon^2+
 (t-s)^2\right).
\end{split} } $$
The proof is complete.
\end{proof}

 In Section \ref{section-uniform-estimates} we will estimate $\gamma_\epsilon$ and give uniform, in $\epsilon$,  moment estimates of functionals of $(y_t^\epsilon)$ on $[0, {T \over \epsilon}]$.  

\begin{lemma}
\label{lemma3-2}
Assume that $(z_t^\epsilon)$ satisfies Assumption \ref{assumption1} and $\alpha_j$ are bounded.
 If $F\in C^2(M;\R)$ and $f\in \B_b(G;\R)$, then  for all $s\le t$,
$${\begin{split}
&\left| {\epsilon \over t-s} \int_{s\over \epsilon }^{t\over \epsilon} \E \left\{ F (y_r^\epsilon) f(z_{r}^\epsilon) \big| \F_{s\over \epsilon}\right\} dr
-\bar f\;   F(y_{s\over \epsilon}^\epsilon) \right| \\
& \le {2a\over \delta} |f|_\infty \left( W(z_{s\over \epsilon}^\epsilon)|F|(y_{s\over \epsilon}^\epsilon)
+\sum_{j=1}^\m \gamma^j_\epsilon  |\alpha_j|_\infty  \right)\left ({\epsilon^2\over t-s} +(t-s)\right)
\end{split}}
$$
where 
$$\gamma^j_{\epsilon}(y)=c_W (z_{s\over \epsilon}^\epsilon) 
 \;|L_{Y_j}F (y^\epsilon_{s\over\epsilon})|+ \sum_{l=1}^m |\alpha_l|_\infty  
{\epsilon\over t-s}  \int_{s\over \epsilon}^{t\over \epsilon} 
  \E  \left\{  \left|L_{Y_l} L_{Y_j}F(y^\epsilon_r)\right|c_W (z^\epsilon_r)
  \; \big|\;\F_{s\over \epsilon} \right\}  dr.
  $$
\end{lemma}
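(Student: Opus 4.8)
The plan is to reduce Lemma \ref{lemma3-2} to Lemma \ref{lemma3} by the classical ergodic trick: subtract the average. Write $g = f - \bar f$, so that $\bar g = 0$. First I would decompose
$$
{\epsilon \over t-s}\int_{s\over\epsilon}^{t\over\epsilon}\E\left\{F(y_r^\epsilon)f(z_r^\epsilon)\,\big|\,\F_{s\over\epsilon}\right\}dr
= {\epsilon \over t-s}\int_{s\over\epsilon}^{t\over\epsilon}\E\left\{F(y_r^\epsilon)g(z_r^\epsilon)\,\big|\,\F_{s\over\epsilon}\right\}dr
+ \bar f\,{\epsilon \over t-s}\int_{s\over\epsilon}^{t\over\epsilon}\E\left\{F(y_r^\epsilon)\,\big|\,\F_{s\over\epsilon}\right\}dr.
$$
For the second term I would Taylor-expand $F(y_r^\epsilon)$ around $y_{s\over\epsilon}^\epsilon$, using that $\dot y_t^\epsilon = \sum_l Y_l(y_t^\epsilon)\alpha_l(z_t^\epsilon)$, to get $F(y_r^\epsilon) = F(y_{s\over\epsilon}^\epsilon) + \int_{s/\epsilon}^r \sum_l (L_{Y_l}F)(y_u^\epsilon)\alpha_l(z_u^\epsilon)\,du$; after averaging in $r$ this produces the main term $\bar f\,F(y_{s\over\epsilon}^\epsilon)$ plus a remainder bounded crudely by $\sum_l |\alpha_l|_\infty$ times $\frac{\epsilon}{t-s}\int_{s/\epsilon}^{t/\epsilon}\E\{|L_{Y_l}F|(y_u^\epsilon)\,|\,\F_{s\over\epsilon}\}\,du$ times $(t-s)$, which is of the right order and absorbed into the claimed bound (note $\bar f \le |f|_\infty$).

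For the first term — the one with $g$, the zero-mean function — I would not expect to apply Lemma \ref{lemma3} directly, since that lemma controls a double iterated integral of a product $g(z_{s_2})f(z_{s_1})$, not a single integral of $F(y_r)g(z_r)$. Instead I would use part (1) or (2) of Lemma \ref{lemma2} in conditional form: because $\bar g = 0$, the quantity $\E\{g(z_r^\epsilon)\,|\,\F_{s/\epsilon}\}$ decays like $a e^{-\delta(r-s/\epsilon)/\epsilon}W(z_{s/\epsilon}^\epsilon)$ by Assumption \ref{assumption1}. Freezing $F$ at $y_{s/\epsilon}^\epsilon$ gives a term bounded by $|F|(y_{s/\epsilon}^\epsilon)|f|_\infty$ times $\frac{\epsilon}{t-s}\int_{s/\epsilon}^{\infty} a e^{-\delta(r-s/\epsilon)/\epsilon}W(z_{s/\epsilon}^\epsilon)\,dr = \frac{a}{\delta}\cdot\frac{\epsilon^2}{t-s}W(z_{s/\epsilon}^\epsilon)|F|(y_{s/\epsilon}^\epsilon)|f|_\infty$; the error from moving $F$ off the frozen point is again handled by the Taylor remainder as above, using the Markov property to condition at the intermediate time and re-applying the exponential mixing bound to the inner $g$-integral. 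This is exactly where the second-order derivatives $L_{Y_l}L_{Y_j}F$ and the function $\gamma_\epsilon^j$ enter: when we expand $F$ to first order inside the $g$-integral, the first-order term carries another $\alpha_l(z_u^\epsilon)$, and bounding $\int\int \E\{(L_{Y_l}F)(y_u^\epsilon)\alpha_l(z_u^\epsilon)g(z_r^\epsilon)\}$ invokes a structure just like the one appearing in the proof of Lemma \ref{lemma3}, yielding the $c_W$-weighted terms in $\gamma_\epsilon^j$.

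The main obstacle, and where the bookkeeping is genuinely delicate, is organizing the Taylor expansion so that the remainder terms land precisely in the form $\sum_j \gamma_\epsilon^j |\alpha_j|_\infty (\tfrac{\epsilon^2}{t-s} + (t-s))$ with the stated $\gamma_\epsilon^j$, rather than in some messier form: one must be careful to condition at the right intermediate times (so that the Markov property applies and the exponential mixing factor $\tfrac{2a}{\delta}$ can be pulled out cleanly), to keep track of which $c_W$ is evaluated at $z_{s/\epsilon}^\epsilon$ versus at the running time, and to collapse the various $\tfrac{\epsilon^2}{t-s}$ and $(t-s)$ contributions into a single prefactor. Once the decomposition is set up, each piece is estimated by a direct application of Lemma \ref{lemma2} (parts (1)–(3)) and the elementary inequality $\bar f \le |f|_\infty$, $\bar W \le$ the bound built into $c_W$, so no new analytic input is needed beyond Assumption \ref{assumption1}.
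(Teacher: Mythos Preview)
There is a genuine gap in your treatment of the $\bar f$-remainder. After writing
\[
\bar f\,\frac{\epsilon}{t-s}\int_{s/\epsilon}^{t/\epsilon}\E\{F(y_r^\epsilon)\mid\F_{s/\epsilon}\}\,dr - \bar f\,F(y^\epsilon_{s/\epsilon})
= \bar f\,\frac{\epsilon}{t-s}\int_{s/\epsilon}^{t/\epsilon}\int_{s/\epsilon}^{r}\sum_l \E\{(L_{Y_l}F)(y_u^\epsilon)\alpha_l(z_u^\epsilon)\mid\F_{s/\epsilon}\}\,du\,dr,
\]
a crude absolute-value bound gives, after Fubini,
\[
|\bar f|\sum_l|\alpha_l|_\infty\,\frac{\epsilon}{t-s}\int_{s/\epsilon}^{t/\epsilon}\Bigl(\tfrac{t}{\epsilon}-u\Bigr)\E\{|L_{Y_l}F|(y_u^\epsilon)\mid\F_{s/\epsilon}\}\,du
\;\le\; |\bar f|\sum_l|\alpha_l|_\infty\cdot\frac{t-s}{\epsilon}\cdot\Bigl[\frac{\epsilon}{t-s}\int_{s/\epsilon}^{t/\epsilon}\E\{|L_{Y_l}F|\mid\F_{s/\epsilon}\}\,du\Bigr],
\]
which is of order $(t-s)/\epsilon$, not $(t-s)$ as you claim. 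You have dropped a factor of $1/\epsilon$: the inner time interval has length $(t-s)/\epsilon$, and nothing in your argument cancels it. This term is therefore not absorbable into the stated bound.

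The paper avoids this by \emph{not} splitting $f=g+\bar f$. It Taylor-expands $F$ first, keeping $f(z_{s_1}^\epsilon)$ intact:
\[
\frac{\epsilon}{t-s}\int_{s/\epsilon}^{t/\epsilon}F(y_r^\epsilon)f(z_r^\epsilon)\,dr
= F(y^\epsilon_{s/\epsilon})\,\frac{\epsilon}{t-s}\int_{s/\epsilon}^{t/\epsilon}f(z_r^\epsilon)\,dr
+ \sum_j\frac{\epsilon}{t-s}\int_{s/\epsilon}^{t/\epsilon}\!\!\int_{s/\epsilon}^{s_1}(L_{Y_j}F)(y_{s_2}^\epsilon)\alpha_j(z_{s_2}^\epsilon)f(z_{s_1}^\epsilon)\,ds_2\,ds_1.
\]
The first term is compared to $\bar f\,F(y^\epsilon_{s/\epsilon})$ via mixing, yielding the $\frac{\epsilon^2}{t-s}W|F|$ piece. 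The second term is \emph{exactly} the double integral in Lemma~\ref{lemma3} (with $L_{Y_j}F$ in place of $F$, $\alpha_j$ in place of $g$, and $f$ in place of $f$), and Lemma~\ref{lemma3} supplies the factor $(\epsilon^2+(t-s)^2)$ --- this is where the missing $\epsilon$ is recovered, by exploiting the oscillation of $\alpha_j$ (or of $f$) rather than bounding crudely. Your $g$-term analysis is essentially this same computation; the error is that by stripping off $\bar f$ first you created a second remainder in which the oscillating factor $f(z_{s_1}^\epsilon)$ has been replaced by the constant $\bar f$, so Lemma~\ref{lemma3} no longer applies and the crude bound is all you have.
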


\begin{proof}
We note that,
$${\begin{split}
 {\epsilon \over t-s}\int_{s\over \epsilon}^{t\over \epsilon }F (y_r^\epsilon) f(z_{r}^\epsilon)dr
=&F(y_{s\over \epsilon}^\epsilon)  {\epsilon \over t-s}\int_{s\over \epsilon }^{t\over \epsilon} f(z_{r}^\epsilon)dr\\
&+ \sum_{j=1}^m  {\epsilon \over t-s}\int_{s\over \epsilon }^{t\over \epsilon} \int_{s\over \epsilon}^{s_1}
dF (Y_j(y_{s_2}^\epsilon)) \alpha_j (z_{s_2}^\epsilon) f(z_{s_1}^\epsilon)ds_2ds_1.
\end{split}}$$

Letting $\psi(r)=ae^{-\delta r}$, it is clear that for $k\ge 2$, 
$${\begin{split}
&\left| \E \left\{   \left( F(y_{s\over \epsilon}^\epsilon)
{\epsilon \over t-s}  \int_{s\over \epsilon }^{t\over \epsilon} f(z_{r}^\epsilon)dr
 -\bar f\;F(y_{s\over \epsilon}) \right)\big |\;\F_{s\over \epsilon}\right\}\right|\\
&\le  \|f\|_W W(z^\epsilon_{s\over \epsilon})   \left|F(y_{s\over \epsilon}^\epsilon)\right| 
{\epsilon^2\over t-s} \int_{s\over  \epsilon^2}^{t\over  \epsilon^2} \psi\left ( r-{s\over \epsilon^2}\right) dr 
\le {a\over \delta} \|f\|_W W(z^\epsilon_{s\over \epsilon})
 \left|F(y_{s\over \epsilon}^\epsilon)\right|  {\epsilon^2 \over t-s}.
 \end{split}}$$
To the second term we apply Lemma \ref{lemma3} and obtain the bound
$${\begin{split}&
 \left| \E \left\{ \sum_{j=1}^m {\epsilon\over t-s}\int_{s\over \epsilon }^{t\over \epsilon} \int_{s\over \epsilon}^{s_1}
dF (Y_j(y_{s_2}^\epsilon)) \alpha_j (z_{s_2}^\epsilon) f(z_{s_1}^\epsilon)ds_2ds_1\big |\;\F_{s\over \epsilon}\right\}\right|\\
&\le 2 \sum_{j=1}^m \tilde\gamma_\epsilon^j |\alpha_j|_\infty |f|_\infty 
 \left ({\epsilon^2\over t-s} +(t-s)\right)
\end{split}}$$
 where
$$\gamma_\epsilon^j
=  |L_{Y_j}F|(y_{s\over\epsilon}^\epsilon)| \;c_W  ( z_{s\over \epsilon}^\epsilon)
+ \sum_{l=1}^m |\alpha_l|_\infty  
{\epsilon\over t-s}  \int_{s\over \epsilon}^{t\over \epsilon} 
  \E  \left\{  \left|(L_{Y_l} L_{Y_j}F)(y^\epsilon_r)\right|c_W (z^\epsilon_r)
  \; \big|\;\F_{s\over \epsilon} \right\}  dr.
  $$
 Adding the error estimates together  we conclude the proof.
 \end{proof}
 
 It is worth noticing that if $\phi:\R\to \R$ is a concave function $\phi(W)$ is again a Lyapunov function.
Thus by using $\log W$ if necessary, we may assume uniform bounds on $\E W^p(z_{s\over \epsilon}^\epsilon) $
and further estimates on the conditional expectation in the error term 
are expected from  Cauchy-Schwartz inequality.
 If $G$ is compact then  $c_W $ is bounded.
In Corollary \ref{corollary-to-lemma3-2}, we will give uniform estimates on moments of $\gamma^j_\epsilon$.

\section{A Reduction}
\label{section-formula}

Let $G$  be a smooth manifold of dimension $n$ with volume measure $dx$.
Let $H^s\equiv H^s(G)$ denote the Sobolev space of real valued functions over a  manifold $G$ and
$\|-\|_s$ the Sobolev norm.
The norm $(\|u\|_s)^2 :=(2\pi)^{-n}\int |\hat u(\xi)|^2 (1+|\xi|^2)^s d\xi$ 
 extends from domains in $\R^n$ 
 to compact manifolds, e.g. by taking supremum over $\|u\|_s$ on charts.
 If $s\in \N$,  $H^s$ is the completion of $C^\infty(M)$
with the norm $\|u\|_{s} =\sum_{j=0}^s \int (|\nabla^j u|)^2 dx )^{1\over 2}$
where $\nabla$ is usually taken as the Levi-Civita connection;
when the manifold is compact this is independent of the Riemannian metric. And 
$u\in H^s$ if and only if for any function $\phi\in C_K^\infty$, $\phi u$ in any chart 
belongs to $H^s$. 

Let $\{X_i, i=0, 1,\dots, m'\}$ be a family of smooth vector fields on $G$ and let us consider the H\"ormander form operator  $\L_0={1\over 2}\sum_{i=1}^{m'}L_{X_i}L_{X_i}+L_{X_0}$. 
 Let $$\Lambda:= \{X_{i_1}, [X_{i_1}, X_{i_2}], [X_{i_1}, [X_{i_2},X_{i_3}]], i_j =0,1, \dots, m'\}.$$
If  the vector fields  in $\Lambda$ generate $T_xG$ at each $x\in G$, we say that H\"ormander's condition is satisfied.  
 By the proof in a theorem of H\"ormander\cite[Theorem1.1]{Hormander-hypo-acta},   if $\L_0$ satisfies the H\"ormander condition then  $u$ is a $C^\infty$ function in every open set where  $\L_0 u$ is a $C^\infty $ function. 
There is a number $\delta>0$ such that there is an $\delta$ improvement in the Sobolev regularity:
if $u$ is a distribution such that $\L_0 u\in H^s_{\loc}$, then $u\in H^{s+\delta}_{\loc}$.

Suppose that $G$ is compact. Then $\|u\|_\delta \le C(\|u\|_{L^2}+\|\L_0 u\|_{L^2})$, the resolvents $(\L_0+\lambda I)^{-1}$ as operators from $L^2(G;dx)$ to $L^2(G;dx)$ are compact,  and $\L_0$ is  Fredholm on $L^2(dx)$, by which we mean that $\L_0$ is a bounded linear operator from $\Dom(\L_0)$ to $L^2(dx)$ and has the Fredholm property :   its range is closed and of finite co-dimension, the dimension of its kernel, $\ker(\L_0)$ is finite. The domain of $\L_0$ is endowed with the norm $|u|_{\Dom(\L_0)}=|u|_{L_2}+|\L_0 u|_{L_2}$.
 Let $\L_0^*$ denote the adjoint of $\L_0$. Then the kernel $N$ of $\L_0^*$ is finite dimensional and its elements are measures on $M$ with smooth densities in $L^2(dx)$. Denote $N^\perp$ the annihilator of $N$,  $g\in L^2(dx)$ is in $ N^\perp$ if and only if $\<g, \pi\>=0$ for all $\pi\in \ker (\L_0^*)$. Since $\L_0$ has closed range, $(\ker(\L_0^*))^\perp$ can be identified with the range of $\L_0$, and the set of $g$ such that the Poisson equation $\L_0 u=g$ is solvable is exactly $N^\perp$. We denote by $\L_0^{-1}g$ a solution. Furthermore $\L_0^{-1}g$ is $C^r$ whenever $g$ is $C^r$. Denote by $\index(\L_0)$, $\dim \ker \L_0-\dim \,\coker \L_0$, the index of a Fredholm operator $\L_0$, where $\coker=L^2(dx)/\range (\L_0)$. If $\L_0$ is self-adjoint, $\index (\L_0)=0$.

 \begin{definition}
 \label{def-Fredholm}
 We say that $\L_0$ is a regularity improving Fredholm operator, if it is a Fredholm operator and 
  $\L_0^{-1}\alpha$ is $C^r$ whenever  $\alpha \in C^r \cap N^\perp$.
 \end{definition}

Let $\{W_t^k, k=1,\dots, m'\}$ be a family of independent real valued Brownian motions. 
We may and will often represent  $\L_0^\epsilon$-diffusions $(z_t^\epsilon)$ as solutions to the following stochastic differential equations, in Stratonovich form,
$$dz_t^\epsilon ={1\over \sqrt \epsilon}\sum_{k=1}^{m'} X_k(z_t^\epsilon) \circ dW_t^k
+{1\over  \epsilon} X_0(z_t^\epsilon)dt.$$

\begin{lemma}
\label{lemma5}
Let $\L_0$ be a regularity improving Fredholm operator on a compact manifold $G$,
$\alpha_k\in C^3\cap N^\perp$, and $\beta_j=\L_0^{-1}\alpha_j$. Let $(y_r^\epsilon)$ be global solutions of (\ref{sde-3}) on $M$. Then for all $0\le s<t$, $\epsilon>0$  and  $f\in C^2(M; \R)$,  
\begin{equation}\label{Ito-tight}
{\begin{split} 
f(y^\epsilon_{t\over \epsilon})
=&f(y^\epsilon_{s\over \epsilon}) 
+ \epsilon \sum_{j=1}^m \left( df(Y_j(y^\epsilon_{t\over \epsilon} ) )\beta_j(z^\epsilon_{t\over \epsilon})
 -df(Y_j(y^\epsilon_{s\over \epsilon} ))\beta_j( z^\epsilon_{s\over \epsilon})\right)\\
&-\epsilon \sum_{i,j=1}^m\int_{s\over \epsilon}^{t\over \epsilon} L_{Y_i}L_{Y_j} f(y^\epsilon_r))
\alpha_i(z^\epsilon_r)\;\beta_j(z^\epsilon_r) \;dr \\
&-  \sqrt \epsilon  \sum_{j=1}^m\sum_{k=1}^{m'}
\int_{s\over \epsilon}^{t\over \epsilon}  df( Y_j(y^\epsilon_r)) \;
d\beta_j( X_k(z^\epsilon_r)) \;dW_r^k.
\end{split}}\end{equation}
Suppose that, furthermore, for each $\epsilon>0$,    $j,k=1,\dots, m$,
$\int_{s\over \epsilon}^{t\over \epsilon} 
\E |df( Y_j(y^\epsilon_r))|^2 |(d\beta_j( X_k)(z^\epsilon_r)|^2 \;dr$ is finite.
 Then, 
 \begin{equation}\label{Ito-tight-2}
{\begin{split} 
&\E\left\{ f(y^\epsilon_{t\over \epsilon})\big | \F_{s\over \epsilon}\right\} -f(y^\epsilon_{s\over \epsilon}) 
= \epsilon \sum_{j=1}^m \left( \E\left\{  df(Y_j(y^\epsilon_{t\over \epsilon} ) )\beta_j(z^\epsilon_{t\over \epsilon})\big | \F_{s\over \epsilon}\right\} 
 -df(Y_j(y^\epsilon_{s\over \epsilon} ))\beta_j( z^\epsilon_{s\over \epsilon})\right)\\
&-\epsilon \sum_{i,j=1}^m\int_{s\over \epsilon}^{t\over \epsilon} \E\left\{  L_{Y_i}L_{Y_j} f(y^\epsilon_r))
\alpha_i(z^\epsilon_r)\;\beta_j(z^\epsilon_r) \big | \F_{s\over \epsilon}\right\} \;dr.
\end{split} } \end{equation}
\end{lemma}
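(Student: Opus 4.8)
The plan is to apply Itô's formula to $f(y^\epsilon_{t/\epsilon})$ but in a form adapted to the fast variable. Since $\dot y^\epsilon_t = \sum_k Y_k(y^\epsilon_t)\alpha_k(z^\epsilon_t)$, the elementary chain rule gives $\frac{d}{dt} f(y^\epsilon_t) = \sum_j L_{Y_j}f(y^\epsilon_t)\,\alpha_j(z^\epsilon_t)$, so $f(y^\epsilon_{t/\epsilon}) - f(y^\epsilon_{s/\epsilon}) = \int_{s/\epsilon}^{t/\epsilon} \sum_j (L_{Y_j}f)(y^\epsilon_r)\,\alpha_j(z^\epsilon_r)\,dr$. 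The oscillatory factor $\alpha_j$ has zero $\pi$-average (it lies in $N^\perp$), so the key device is to write $\alpha_j = \L_0\beta_j$ with $\beta_j = \L_0^{-1}\alpha_j$, which is legitimate and yields a $C^3$ function $\beta_j$ because $\L_0$ is a regularity improving Fredholm operator and $\alpha_j \in C^3\cap N^\perp$. Then I would apply Itô's formula to the process $r \mapsto \epsilon\,(L_{Y_j}f)(y^\epsilon_r)\,\beta_j(z^\epsilon_r)$, using that $(z^\epsilon_r)$ is the solution of $dz^\epsilon_r = \frac{1}{\sqrt\epsilon}\sum_k X_k(z^\epsilon_r)\circ dW^k_r + \frac1\epsilon X_0(z^\epsilon_r)\,dr$, hence has generator $\frac1\epsilon\L_0$.

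The main computation is the following: for fixed $j$, Itô's formula (in Stratonovich form in the $z$-variable, ordinary calculus in the $y$-variable) gives
\begin{equation*}
{\begin{split}
d\Bigl(\epsilon (L_{Y_j}f)(y^\epsilon_r)\,\beta_j(z^\epsilon_r)\Bigr)
&= \epsilon\,\beta_j(z^\epsilon_r)\sum_i (L_{Y_i}L_{Y_j}f)(y^\epsilon_r)\,\alpha_i(z^\epsilon_r)\,dr\\
&\quad + (L_{Y_j}f)(y^\epsilon_r)\,(\L_0\beta_j)(z^\epsilon_r)\,dr
+ \sqrt\epsilon\,(L_{Y_j}f)(y^\epsilon_r)\sum_k d\beta_j(X_k(z^\epsilon_r))\,dW^k_r,
\end{split}}
\end{equation*}
where the factor $\frac1\epsilon$ from the generator of $z^\epsilon$ cancels the $\epsilon$ in front, producing the clean $(L_{Y_j}f)(\L_0\beta_j)\,dr = (L_{Y_j}f)\,\alpha_j\,dr$ term. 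Summing over $j$ and rearranging, the term $\sum_j (L_{Y_j}f)(y^\epsilon_r)\alpha_j(z^\epsilon_r)\,dr$ — which is exactly $d f(y^\epsilon_r)$ — gets replaced by the boundary terms $\epsilon\sum_j\bigl(L_{Y_j}f(y^\epsilon_{t/\epsilon})\beta_j(z^\epsilon_{t/\epsilon}) - L_{Y_j}f(y^\epsilon_{s/\epsilon})\beta_j(z^\epsilon_{s/\epsilon})\bigr)$, minus the integral $\epsilon\sum_{i,j}\int_{s/\epsilon}^{t/\epsilon}(L_{Y_i}L_{Y_j}f)(y^\epsilon_r)\alpha_i(z^\epsilon_r)\beta_j(z^\epsilon_r)\,dr$, minus the stochastic integral $\sqrt\epsilon\sum_{j,k}\int_{s/\epsilon}^{t/\epsilon}(L_{Y_j}f)(y^\epsilon_r)\,d\beta_j(X_k(z^\epsilon_r))\,dW^k_r$. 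Writing $df(Y_j) = L_{Y_j}f$ gives exactly \eqref{Ito-tight}. Regularity is enough for Itô's formula: $f\in C^2$, $Y_j\in C^6$ so $L_{Y_j}f\in C^1$ and $L_{Y_i}L_{Y_j}f$ is continuous, and $\beta_j\in C^3$.

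For \eqref{Ito-tight-2}, I would take conditional expectation $\E\{\,\cdot\mid\F_{s/\epsilon}\}$ of \eqref{Ito-tight}. The only point requiring an argument is that the stochastic integral term has vanishing conditional expectation: this is where the stated hypothesis $\int_{s/\epsilon}^{t/\epsilon}\E\,|df(Y_j(y^\epsilon_r))|^2\,|d\beta_j(X_k)(z^\epsilon_r)|^2\,dr < \infty$ enters — it guarantees the integrand is in $L^2$ of the product measure, so the Itô integral $\int_{s/\epsilon}^{t/\epsilon}(L_{Y_j}f)(y^\epsilon_r)\,d\beta_j(X_k(z^\epsilon_r))\,dW^k_r$ is a genuine $L^2$ martingale (not merely a local martingale) started at time $s/\epsilon$, hence has zero conditional expectation given $\F_{s/\epsilon}$. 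The boundary and finite-variation terms pass through the conditional expectation directly. I expect the only genuine obstacle to be bookkeeping: making sure the Stratonovich-to-Itô conversion and the cancellation of $\epsilon$ against $\frac1\epsilon$ are handled cleanly, and checking the integrability needed to localize away the stopping times in the first (pathwise) identity — but since $G$ is compact and $\beta_j, d\beta_j(X_k)$ are continuous hence bounded on $G$, the coefficients of the $z^\epsilon$-part are bounded and no localization subtlety survives; the $y^\epsilon$-dependence only enters through the $C^1$ function $L_{Y_j}f$ evaluated along the (assumed globally defined) path, so \eqref{Ito-tight} holds as a genuine identity.
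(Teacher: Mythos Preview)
Your proposal is correct and follows essentially the same approach as the paper's proof: both start from the chain rule $f(y^\epsilon_{t/\epsilon})-f(y^\epsilon_{s/\epsilon})=\sum_j\int_{s/\epsilon}^{t/\epsilon}(L_{Y_j}f)(y^\epsilon_r)\alpha_j(z^\epsilon_r)\,dr$, apply It\^o's formula to the product $(L_{Y_j}f)(y^\epsilon_r)\beta_j(z^\epsilon_r)$ using the regularity $\beta_j\in C^3$ and $L_{Y_j}f\in C^1$ (handled via the product rule, ordinary calculus in $y$ and It\^o in $z$, since $y^\epsilon$ has bounded variation), and then substitute back using $\L_0\beta_j=\alpha_j$ to obtain \eqref{Ito-tight}. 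The derivation of \eqref{Ito-tight-2} by taking conditional expectation and invoking the $L^2$-integrability hypothesis to kill the stochastic integral is likewise identical to the paper's.
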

\begin{proof}
Firstly, for any $C^2$ function  $f:M\to R$,
$$f(y_{t\over \epsilon}^\epsilon)-f(y_{s\over \epsilon}^\epsilon)= \sum_{j=1}^{m}\int_{s\over \epsilon}^{t\over \epsilon} df (Y_j(y_{s_1}^\epsilon )) \alpha_j (z_{s_1}) ds_1.$$
Since the $\alpha_j$'s  are $C^2$ so are $\beta_j$, following from the regularity improving property of $\L_0$. 
We apply It\^o's formula to the functions $(L_{Y_j}f)\beta_j: M\times G\to \R$. To avoid extra regularity conditions,  
we apply It\^o's formula to the function $df(Y_j)$, which is $C^1$, and the $C^3$ functions $\beta_j$ separately and follow it with the product rule. This gives:
\begin{equation*} 
\label{product}
{\begin{split} 
df(Y_j(y^\epsilon_{t\over \epsilon} ) )\beta_j(z^\epsilon_{t\over \epsilon})
&= df(Y_j(y^\epsilon_{s\over \epsilon} ))\beta_j( z^\epsilon_{s\over \epsilon})
+\sum_{j=1}^m \int_{s\over \epsilon}^{t\over \epsilon} L_{Y_i}L_{Y_j} f(y^\epsilon_r)
\,\alpha_i(z^\epsilon_r)\;\beta_j( z^\epsilon_r)\; \;dr \\
&+ {1\over \sqrt \epsilon} \sum_{k=1}^{m'} \int_{s\over \epsilon}^{t\over \epsilon}  
L_{Y_j} f( y^\epsilon_r) \, d\beta_j\left( X_k (z^\epsilon_r)\right)dW_r^k
+{1\over \epsilon} \int_{s\over \epsilon}^{t\over \epsilon}  L_{Y_j} f(y^\epsilon_r) \,\L_0 \beta_j(z_r^\epsilon)dr .
\end{split} }
\end{equation*}
Substitute this into the earlier equation, we obtain (\ref{Ito-tight}). Part (\ref{Ito-tight-2}) is obvious, as
we note that
 $${\begin{split}
 &\E \left(\sum_{k=1}^{m'}
\int_{s\over \epsilon}^{t\over \epsilon}  df( Y_j(y^\epsilon_r)) (d\beta_j)
\left( X_k(z^\epsilon_r)\right) \;dW_r^k\right)^2
\le 
 \sum_{k=1}^{m'}\E\int_{s\over \epsilon}^{t\over \epsilon}   df( Y_j(y^\epsilon_r))  |^2 |d\beta(X_k(z_r^\epsilon))|^2 | \d r<\infty
 \end{split}}$$
and the stochastic integrals are  $L^2$-martingales, so (\ref{Ito-tight-2}) follows.
 \end{proof} 
 
 When $G$ is compact, $d\beta(X_k)$ is bounded and the condition becomes:
 $\E\int_{s\over \epsilon}^{t\over \epsilon}   df( Y_j(y^\epsilon_r))  |^2 \d r$ is finite,
which we discuss below. Otherwise, assumptions on
$\E |d\beta(X_k(z_r^\epsilon))|^{2+} $ is needed.

\section{Uniform Estimates}
 \label{section-uniform-estimates}

 If $V: M\to \R_+$ is a locally bounded function such that $\lim_{y\to \infty} V(y)=\infty$
we say that $V$ is a pre-Lyapunov function. 
Let  $\alpha_k\in \B_b(G;\R)$. Let $\{Y_k\}$ be $C^1$ smooth vector fields on $M$
 such that:  either (a) each $Y_k$ grows at most linearly; or (b) 
there exist a pre-Lyapunov function $V\in C^1(M;\R_+)$, positive constants $c$ and $K$ such that 
$\sum_{k=1}^m |L_{Y_k} V| \le c+KV$ then  the equations (\ref{sde-3})
are complete.  
In case (a) let $o\in M$ and $a$ be a constant such that  $|Y_k(x)|\le a(1+\rho(o,x))$
where $\rho$ denotes the Riemannian distance function on $M$. 
 For $x$ fixed, denote $\rho_x=\rho(x, \cdot)$.
By the definition of the Riemannian distance function,
$$
\rho(y_t^\epsilon, y_0)\le \int_0^t |\dot y_s^\epsilon|ds =\sum_{k=1}^m  \int_0^t |Y_k(y_s^\epsilon) \alpha_k(z_s^\epsilon) |ds
\le\sum_{k=1}^m |\alpha_k|_\infty  \int_0^t |Y_k(y_s^\epsilon)| ds.$$
This together with the inequality  $\rho(y_t^\epsilon, o)\le \rho(y_t^\epsilon, y_0)+\rho(o, y_0^\epsilon)$ implies that
for all $p\ge 1$, there exist constants $C_1, C_2$ depending on $p$ such that
$$ \sup_{s\le t}\rho^p(y_s^\epsilon, o)\le \left(C_1 \rho^p(o, y_0^\epsilon)+C_2 t\right)  e^{C_2t^p}$$
where $C_2=a^pC_1^2( \sum_{k=1}^m |\alpha_k|_\infty)^p$.  When restricted to $\{t<\tau^\epsilon\}$, the first time $y_t^\epsilon$ reaches the cut locus, the bounded is simple $Ce^{Ct}$.
In case (b),  for any $q\ge 1$, 
$$  \sup_{s\le t} \left(V(y_s^\epsilon)\right)^q\le \left( V^q(y_0^\epsilon)+ctq\sum_{k=1}^m|\alpha_k|_\infty  \right)
\exp{\left( q\sum_{k=1}^m|\alpha_k|_\infty(K+c)t\right)},$$
which followed easily from the bound $$|dV^q(\alpha_k Y_k)|=|qV^{q-1}dV(\alpha_kY_k)|\le q|\alpha_k|_\infty(c+(c+K) V^q).$$

For the convenience of comparing the above estimates, which are standard and expected,
 with the uniform estimates of $(y_t^\epsilon)$ in Theorem
\ref{uniform-estimates} below in the time scale ${1\over \epsilon}$, we record this in the following Lemma.

 \begin{lemma}
\label{standard-estimates}
Let $\alpha_k \in \B_b(G;\R)$. Let $\epsilon\in (0,1)$, $0\le s\le t$, $\omega\in \Omega$.
\begin{enumerate}
\item If $\{Y_k\}$ grow at most linearly then  (\ref{sde-3}) is complete  and  there exists $C,C(t)$ s.t.
$$  \sup_{0\le s\le t}\rho^p(y_s^\epsilon(\omega), o)\le \left(C \rho^p(o, y_0^\epsilon(\omega))+C(t)\right) e^{C(t)}.$$
\item If there exist a pre-Lyapunov function $V\in C^1(M;\R_+)$, positive constants $c$ and $K$ such that 
$\sum_{j=1}^m |L_{Y_j} V| \le c+KV$, then (\ref{sde-3}) is complete. 
\item If (\ref{sde-3}) is complete and there exists  $V\in C^1(M;\R_+)$ such that 
$\sum_{j=1}^m |L_{Y_j} V| \le c+KV$ then there exists a constant $C$,  s.t.
$$ \sup_{0\le s\le t} \left(V(y_s^\epsilon(\omega))\right)^q\le \left(   \left( V(y_0^\epsilon(\omega))\right)^q+Ct \right) e^{Ct}.$$

\end{enumerate}

\end{lemma}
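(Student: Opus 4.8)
The plan is to treat (\ref{sde-3}) pathwise. Fix $\omega\in\Omega$; since $r\mapsto z_r^\epsilon(\omega)$ is continuous, the coefficients $r\mapsto\alpha_k(z_r^\epsilon(\omega))$ are bounded measurable functions of $r$, so (\ref{sde-3}) is an ODE of Carath\'eodory type with a unique maximal solution on some interval $[0,\tau^\epsilon(\omega))$. Each of the three assertions then reduces to an a priori bound on this maximal solution that is finite on every bounded time interval; such a bound forces $\tau^\epsilon(\omega)=\infty$, i.e. completeness, because a pre-Lyapunov function has relatively compact sublevel sets (and, in case (1), closed metric balls are relatively compact under the standing completeness hypothesis on $M$). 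In case (3) completeness is instead part of the hypothesis, so there the bound is simply propagated along the already global solution.

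For part (1), I would use the length characterization of the Riemannian distance: for $s<\tau^\epsilon(\omega)$ the curve $r\mapsto y_r^\epsilon$ joining $y_0^\epsilon$ to $y_s^\epsilon$ has length $\int_0^s|\dot y_r^\epsilon|\,dr$, whence
$$\rho(y_s^\epsilon,y_0^\epsilon)\le\sum_{k=1}^m|\alpha_k|_\infty\int_0^s|Y_k(y_r^\epsilon)|\,dr\le a\Big(\sum_{k=1}^m|\alpha_k|_\infty\Big)\int_0^s\big(1+\rho(o,y_r^\epsilon)\big)\,dr.$$
Combining with $\rho(o,y_s^\epsilon)\le\rho(o,y_0^\epsilon)+\rho(y_0^\epsilon,y_s^\epsilon)$ and applying Gronwall's inequality to $s\mapsto\sup_{r\le s}\rho(o,y_r^\epsilon)$ gives a bound of the form $C_1\rho(o,y_0^\epsilon)+C_2(t)$; raising to the $p$-th power and using $(u+v)^p\le2^{p-1}(u^p+v^p)$ yields the stated estimate, with $C$ depending only on $p$ and $C(t)$ absorbing $a$, $\sum_k|\alpha_k|_\infty$ and $t$. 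Finiteness of the right-hand side for every $t$ rules out explosion, giving completeness. Note this uses only the \emph{definition} of $\rho$, so the non-smoothness of the distance function at the cut locus plays no role.

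For parts (2) and (3), I would differentiate the Lyapunov function along the solution. For $r<\tau^\epsilon(\omega)$,
$$\frac{d}{dr}V(y_r^\epsilon)=\sum_{k=1}^m dV\big(Y_k(y_r^\epsilon)\big)\,\alpha_k(z_r^\epsilon)\le\Big(\sum_{k=1}^m|\alpha_k|_\infty\Big)\big(c+KV(y_r^\epsilon)\big),$$
so Gronwall gives $V(y_s^\epsilon)\le\big(V(y_0^\epsilon)+c't\big)e^{c't}$ on $[0,\tau^\epsilon)$ with $c'=(K+c)\sum_k|\alpha_k|_\infty$; properness of $V$ then gives $\tau^\epsilon=\infty$, which is part (2). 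For part (3), the same computation applied to $V^q$, using $|dV^q(\alpha_kY_k)|=qV^{q-1}|dV(\alpha_kY_k)|\le q|\alpha_k|_\infty\big(c+(c+K)V^q\big)$ (from $V^{q-1}\le1+V^q$), yields $\frac{d}{dr}V^q(y_r^\epsilon)\le q\big(\sum_k|\alpha_k|_\infty\big)\big(c+(c+K)V^q(y_r^\epsilon)\big)$, and one more application of Gronwall produces $\sup_{s\le t}V^q(y_s^\epsilon)\le\big(V^q(y_0^\epsilon)+Ct\big)e^{Ct}$ with a single constant $C$ (enlarged so as to dominate the additive term $qc\sum_k|\alpha_k|_\infty$). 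All constants are uniform in $\epsilon\in(0,1)$ and in $\omega$, since only the sup-norms $|\alpha_k|_\infty$ enter.

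There is no serious obstacle here; the only two points needing a little care are (i) that in part (1) the estimate rests on the length definition of $\rho$ rather than on any differentiability of $\rho$, so the cut locus is irrelevant, and (ii) that a bound finite on every bounded time interval genuinely excludes finite-time explosion, which uses relative compactness of sublevel sets of a pre-Lyapunov function in (2) and of closed metric balls in (1).
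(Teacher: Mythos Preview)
Your proposal is correct and follows essentially the same route as the paper: the length characterization of $\rho$ combined with the triangle inequality and Gronwall for part (1), and differentiation of $V$ (respectively $V^q$) along the trajectory together with the bound $|dV^q(\alpha_kY_k)|\le q|\alpha_k|_\infty(c+(c+K)V^q)$ and Gronwall for parts (2)--(3). Your remarks on the cut locus and on how the a priori bound forces non-explosion are exactly the points the paper has in mind.
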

 
 If $V\in \B(M;\R)$ is a positive function, denote by $B_{V,r}$ the following classes of functions:
\begin{equation}
\label{BVr}
B_{V,r}=\left\{ f\in C^r(M;\R): \sum_{j=0}^r |d^{j}f|\le c+c V^q \hbox{ for some numbers $c$, $q$ }\right\}.
\end{equation}
In particular, $B_{V,0}$ is the class of continuous functions bounded by a function of the form $ c+c V^q$. In $\R^n$, the constant functions and the function $V(x)=1+|x|^2$ are potential `control' functions. 
 
\begin{assumption}
\label{assumption2-Y}
Assume that (i)  (\ref{sde-3}) are complete for every $\epsilon\in (0,1)$, (ii) $\sup_\epsilon\E\left( V(y_0^\epsilon)\right)^q$ is finite for every $q\ge 1$;
 and (iii)  there exist a function $V\in C^2(M;\R_+)$, positive constants $c$ and $K$ such that 
$$\sum_{j=1}^m |L_{Y_j} V| \le c+KV, \quad \sum_{i,j=1}^m |L_{Y_i}L_{Y_j} V|  \le c+KV.$$
\end{assumption}

Below we assume that $\L_0$ satisfies H\"ormander's condition. We do not make any assumption on the  mixing rate.
  Let $\beta_j=\L_0^{-1}\alpha_j$, $a_1= \sum_{j=1}^m |\beta_j|_\infty$,  $a_2=\sum_{i,j=1}^m |\alpha_i|_\infty|\beta_j|_\infty$,
  $a_3=\sum_{j=1}^m |d\beta_j|_\infty$, and $a_4=\sum_{k=1}^m |X_k|^2_\infty$.
  We recall that if $\alpha_k$ and $\L_0$ satisfy  Assumption \ref{assumption-Hormander} then
  $\L_0$ is a regularity improving Fredholm operator.
  
\begin{theorem}
\label{uniform-estimates}
Let  $\L_0$ be a regularity improving Fredholm operator on a compact manifold $G$,
and $\alpha_k\in C^3(G;\R)\cap N^\perp$.
Assume  that $Y_k$ satisfy Assumption \ref{assumption2-Y}. Then for all $p\ge 1$, there exists  a constant $C=C(c, K, a_i,p)$
s.t. for any $0\le s\le t$ and all $\epsilon\le\epsilon_0$,
\begin{equation}
\label{uniform-estimate-inequality}
 \E \left\{ \sup_{s\le u\le t}  \left( V(y^\epsilon_{u\over \epsilon}) \right)^{2p} \; \big| \; \F_{s\over \epsilon} \right\}
\le  \left( 4 V^{2p}(y^\epsilon_{s\over \epsilon}) +C(t-s)^2+C \right) 
e^{ C(t-s+1)t}.
\end{equation}
Here  $\epsilon_0\le 1$ depends on $c,K, p, a_1$ and $V, Y_i, Y_j$.
\end{theorem}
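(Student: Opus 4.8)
The plan is to use the reduction formula \eqref{Ito-tight} from Lemma \ref{lemma5} with $f = V^{2p}$ (noting $V \in C^2$, so $V^{2p} \in C^2$), rewrite $V^{2p}(y^\epsilon_{t/\epsilon})$ as the sum of a boundary term of order $\epsilon$, a drift-type integral of order $\epsilon$ involving $L_{Y_i}L_{Y_j}(V^{2p})$, and a martingale term of order $\sqrt\epsilon$. Then I would take the supremum over $u \in [s,t]$, raise to a power if needed, take conditional expectations given $\F_{s/\epsilon}$, bound the martingale term via Burkholder--Davis--Gundy, absorb the $\epsilon$-small factors, and close the estimate with a Gronwall-type argument. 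The appearance of $e^{C(t-s+1)t}$ and the quadratic-in-$(t-s)$ term reflects exactly such a Gronwall closure applied over the long time interval $[s/\epsilon, t/\epsilon]$ after the $\epsilon$-rescaling cancels the $1/\epsilon$ length.

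\textbf{Key steps.} First, compute the derivatives: since $\sum_j |L_{Y_j}V| \le c + KV$ and $\sum_{i,j}|L_{Y_i}L_{Y_j}V| \le c + KV$, the chain rule gives $|L_{Y_i}L_{Y_j}(V^{2p})| \le C(1 + V^{2p})$ and $|L_{Y_j}(V^{2p})| \le C(1+V^{2p})$ with $C = C(c,K,p)$; similarly $|d(V^{2p})(Y_j)| = |L_{Y_j}(V^{2p})|$ is controlled. Second, apply \eqref{Ito-tight}: for $u \in [s,t]$,
\[
V^{2p}(y^\epsilon_{u/\epsilon}) = V^{2p}(y^\epsilon_{s/\epsilon}) + \epsilon\,(\text{boundary terms}) - \epsilon \int_{s/\epsilon}^{u/\epsilon} \sum_{i,j} L_{Y_i}L_{Y_j}(V^{2p})(y^\epsilon_r)\,\alpha_i(z^\epsilon_r)\beta_j(z^\epsilon_r)\,dr - \sqrt\epsilon\, M_u,
\]
where $M_u$ is the martingale. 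The boundary terms are bounded by $\epsilon a_1 C(1 + V^{2p}(y^\epsilon_{u/\epsilon}) + V^{2p}(y^\epsilon_{s/\epsilon}))$; for $\epsilon$ small enough (here $\epsilon_0$ enters, depending on $a_1, c, K, p$) the term $\epsilon a_1 C V^{2p}(y^\epsilon_{u/\epsilon})$ can be absorbed into the left side after taking $\sup_u$, using a constant like $1/4$ so that the coefficient $4$ appears on the right. Third, the drift integral is bounded using $|\alpha_i\beta_j|_\infty \le a_2$ by $\epsilon a_2 C \int_{s/\epsilon}^{u/\epsilon}(1 + V^{2p}(y^\epsilon_r))\,dr = a_2 C \int_s^u (1 + V^{2p}(y^\epsilon_{v/\epsilon}))\,dv$ after substituting $v = \epsilon r$. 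Fourth, for the martingale term, its quadratic variation is $\epsilon \sum_{j,k} \int_{s/\epsilon}^{u/\epsilon} |d(V^{2p})(Y_j)(y^\epsilon_r)|^2 |d\beta_j(X_k)(z^\epsilon_r)|^2\,dr \le \epsilon a_3^2 a_4 C \int_{s/\epsilon}^{u/\epsilon}(1 + V^{4p}(y^\epsilon_r))\,dr$; BDG then bounds $\E\{\sup_u \sqrt\epsilon |M_u| \mid \F_{s/\epsilon}\}$ by a constant times $(\epsilon \int_{s/\epsilon}^{u/\epsilon} \dots)^{1/2}$, which by Cauchy--Schwarz and the substitution is controlled by $\tfrac14 \E\{\sup V^{2p}\} + C\int_s^t(1 + \E\{\sup_{v'\le v}V^{2p}\})\,dv$ — the standard trick of splitting the square root. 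Fifth, collect: with $\phi(u) := \E\{\sup_{s\le v\le u} V^{2p}(y^\epsilon_{v/\epsilon}) \mid \F_{s/\epsilon}\}$, one arrives at $\phi(t) \le 4V^{2p}(y^\epsilon_{s/\epsilon}) + C(t-s) + C \int_s^t \phi(v)\,dv$, and Gronwall gives the claimed bound with the exponential factor. To make the argument rigorous one works first with a localized (stopped) version $\tau_n \wedge \cdot$ so all integrals are finite — this also discharges the integrability hypothesis in Lemma \ref{lemma5} — and then lets $n \to \infty$ by monotone convergence, using completeness (Lemma \ref{standard-estimates}) to ensure $\tau_n \to \infty$.

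\textbf{Main obstacle.} The delicate point is the martingale control: the quadratic variation involves $V^{4p}$ rather than $V^{2p}$ (since $|d(V^{2p})(Y_j)|^2$ grows like $V^{4p}$), so a naive BDG bound produces the wrong power. The resolution is the factored estimate $|d(V^{2p})(Y_j)|^2 \le C V^{2p}(1 + V^{2p})$ — i.e. pulling out one factor of $\sup_u V^{2p}$ from inside the square root via $\E\{(\sup X)^{1/2}(\int\dots)^{1/2}\} \le \tfrac14\E\{\sup X\} + \E\{\int \dots\}$ — so that after taking the square root and applying Cauchy--Schwarz one recovers only $\sup_u V^{2p}$ (absorbed on the left, yielding part of the coefficient $4$) times an integral of $V^{2p}$ (fed into Gronwall). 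Getting the bookkeeping of these absorbed fractions consistent with the stated constant $4$ and with the $\epsilon_0$ threshold — which must depend only on $c, K, p, a_1$ and the geometry of $V, Y_i, Y_j$, not on $t$ — is the part that needs care, but it is routine once the factorization is set up correctly.
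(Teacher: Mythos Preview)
Your approach is correct and closes to the claimed bound, but it differs from the paper's in one key technical choice. The paper applies the reduction formula \eqref{Ito-tight} to $f=V^p$ rather than $V^{2p}$, and then passes to $\E\{\sup_u V^{2p}\mid\F_{s/\epsilon}\} = \E\{(\sup_u V^p)^2\mid\F_{s/\epsilon}\}$ by squaring the decomposition (using $(\sum_i a_i)^2 \le C\sum_i a_i^2$) and invoking the conditional Doob $L^2$ inequality for the martingale part. With this choice the martingale's quadratic variation involves $|L_{Y_j}V^p|^2 \le (c_1+K_1V^p)^2$, which is already controlled by $1+V^{2p}$, so the ``$V^{4p}$ obstacle'' you identify never arises and the factorization/BDG--square-root trick is unnecessary. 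Your route still works because the factorization $|L_{Y_j}(V^{2p})|^2 \le C\,V^{2p}(1+V^{2p})$ is valid and the absorption closes, but the paper's choice of $f=V^p$ is more economical: it produces the correct power directly and explains the coefficient $4$ (from squaring the sum and from the Doob constant). The paper also picks up an extra factor of $(t-s)$ in the Gronwall coefficient via Cauchy--Schwarz on the squared drift integral, which is why the exponent reads $C(t-s+1)t$ rather than the cleaner $C(t-s)$ your un-squared argument yields; either bound suffices for the statement.
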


\begin{proof}
Let $0\le s\le t$.  We apply (\ref{Ito-tight}) to $f=V^p$:
\begin{equation*}
{\begin{split} 
V^p(y^\epsilon_{t\over \epsilon})
=& V^p(y^\epsilon_{s\over \epsilon}) 
+ \epsilon \sum_{j=1}^m dV^p\left(Y_j(y^\epsilon_{t\over \epsilon} )\right )\beta_j(z^\epsilon_{t\over \epsilon}) 
 -\epsilon \sum_{j=1}^m dV^p\left (Y_j(y^\epsilon_{s\over \epsilon} )\right)\beta_j( z^\epsilon_{s\over \epsilon})\\
&-\epsilon  \sum_{i,j=1}^m  \int_{s\over \epsilon}^{t\over \epsilon} L_{Y_i}L_{Y_j} V^p\left(y^\epsilon_r\right)
\alpha_i(z^\epsilon_r)\;\beta_j(z^\epsilon_r) \d r\\
&-\sqrt\epsilon \sum_{k=1}^p \int_{s\over \epsilon}^{t\over \epsilon}
\sum_{j=1}^m dV^p (Y_j(y_r^\epsilon))(d\beta_j) (X_k(z_r^\epsilon)) \;dW_r^k.
\end{split}}\end{equation*}

In the following estimates, we may first assume that $\sum_{j=1}^m |L_{Y_j}V|$ and $\sum_{i,j=1}^m |L_{Y_j}L_{Y_i}V|$ are bounded.  We may then replace $t$ by $t\wedge \tau_n$ where $\tau_n$ is the first time that either quantity is greater or equal to $n$. We take this point of view for proofs of inequalities and may not repeat it each time. 

We take the supremum over $[s,t]$ followed by conditional expectation of both sides of the inequality:
\begin{equation*}
{\begin{split} 
& \E \left\{ \sup_{s\le u\le  t} V^p(y^\epsilon_{u\over \epsilon}) \; \big| \; \F_{s\over \epsilon} \right\}
\le V^p(y^\epsilon_{s\over \epsilon}) 
+ \epsilon  \E \left\{ \sup_{s\le u\le  t} 
  \sum_{j=1}^m  dV^p\left(Y_j(y^\epsilon_{u\over \epsilon} )\right )\beta_j(z^\epsilon_{u\over \epsilon}) \; \big| \; \F_{s\over \epsilon} \right\}\\
 & \qquad - \sum_{j=1}^m dV^p\left (Y_j(y^\epsilon_{s\over \epsilon} )\right)\beta_j( z^\epsilon_{s\over \epsilon})\\
&\qquad +\epsilon  \E\left\{ \sup_{s\le u\le  t} \left|  \int_{s\over \epsilon}^{u\over \epsilon}  \sum_{i,j=1}^mL_{Y_i}L_{Y_j} V^p\left(y^\epsilon_r\right)
\alpha_i(z^\epsilon_r)\;\beta_j(z^\epsilon_r)  \;dr \right|\; \big| \; \F_{s\over \epsilon} \right\}\\
&\qquad +\sqrt\epsilon\E \left\{ \sup_{s\le u\le  t} \left| \sum_{k=1}^{m'} \int_{s\over \epsilon}^{u\over \epsilon}
\sum_{j=1}^m dV^p (Y_j(y_r^\epsilon))(d\beta_j) (X_k(z_r^\epsilon)) dW_r^k\right|  \; \big| \; \F_{s\over \epsilon} \right\}.
\end{split}}\end{equation*}
By the conditional Jensen inequality, Doob's inequality and the flow property,
 there exists a universal constant $\tilde C$ 
depending only on $p$ s.t.,

\begin{equation*}
{\begin{split} 
& \E\left\{  \sup_{s\le u\le t} V^{2p}(y^\epsilon_{u\over \epsilon}) \; \big| \; \F_{s\over \epsilon} \right\}\\
&\le 4  V^{2p}(y^\epsilon_{s\over \epsilon}) 
+ 4\epsilon^2  \E\left( \left\{   \sum_{j=1}^m   |\beta_j|_\infty \sup_{s\le u \le t} 
 \left| dV^p(Y_j(y^\epsilon_{u\over \epsilon} ))\right|  \; \big| \; \F_{s\over \epsilon} \right\}\right)^2\\
&+ 4\epsilon^2 \left( \sum_{j=1}^m   |\beta_j|_\infty \left| dV^p(Y_j(y^\epsilon_{s\over \epsilon} ))\right| \right)^2 \\
& +8\epsilon(t-s) \E\left\{\left( \int_{s\over \epsilon}^{t\over \epsilon} 
  \sum_{i,j=1}^m  |\alpha_i|_\infty|\beta_j|_\infty \left| L_{Y_i}L_{Y_j} V^p\left(y^\epsilon_r\right)\right|\;dr\right)^2  \; \big| \; \F_{s\over \epsilon} \right\}\\
&+\tilde C  \sum_{k=1}^p\E  \left\{
\epsilon \int_{s\over \epsilon}^{t\over \epsilon}  \left|  \sum_{j=1}^m   dV^p( Y_j(y^\epsilon_r)) (d\beta_j)  \left( X_k(z^\epsilon_r)\right) 
\right|^2 \d r  \; \big| \; \F_{s\over \epsilon} \right\}.
\end{split}}
\end{equation*}

Since $\sum_j|L_{Y_j}V|\le c+KV$ and $\sum_{i,j=1}^p |L_{Y_i}L_{Y_j}V|\le c+KV$,
there are constants $c_1$ and $K_1$ such that
$\max_{j=1, \dots, m}|L_{Y_j}V^p |\le c_1+K_1V^p$ and  $\max_{i,j=1, \dots, m} |L_{Y_i}L_{Y_j}V^p|\le c_1+K_1V^p$.
This leads to the following estimate:

\begin{equation*}
{\begin{split} 
&\E\left\{  \sup_{s\le u\le t} V^{2p}(y^\epsilon_{u\over \epsilon}) \; \big| \; \F_{s\over \epsilon} \right\}\\
\le&
 4   V^{2p}(y^\epsilon_{s\over \epsilon}) +
8\epsilon^2(a_1)^2  \left(  2(c_1)^2+ (K_1)^2 \E\left \{\sup_{s\le u \le t}   V^{2p}(y^\epsilon_{u\over \epsilon} )   \; \big| \; \F_{s\over \epsilon}\right\}
+(K_1)^2 V^{2p}(y^\epsilon_{s\over \epsilon} )\right) \\
&+16 (a_2)^2(t-s) \epsilon  \int_{s\over \epsilon}^{t\over \epsilon}  \left( (c_1)^2+ (K_1)^2\E\left\{  V^{2p}(y^\epsilon_r )  \; \big| \; \F_{s\over \epsilon}\right\}\right)dr \\
&+ \tilde C (a_3a_4)^2  \epsilon
\int_{s\over \epsilon}^{t\over \epsilon}  \E \left\{ \left (c_1+K_1 V^p((y^\epsilon_r))\right)^2  \; \big| \; \F_{s\over \epsilon} \right\}\d r.
\end{split}}\end{equation*}
Let  $\epsilon_0=\min\{{1\over 8a_1K_1}, 1\}$. For $\epsilon\le\epsilon_0$,
\begin{equation*}
{\begin{split} 
&{1\over 2}  \E\left\{  \sup_{s\le u\le t} V^{2p}(y^\epsilon_{u\over \epsilon}) \; \big| \; \F_{s\over \epsilon} \right\}\\
\le & 4  V^{2p}(y^\epsilon_{s\over \epsilon}) 
+16\epsilon^2 ( a_1c_1)^2  +16(t-s)^2 (a_2c_1)^2+ 4\tilde C (a_3a_4c_1)^2(t-s)\\
&+ \left( 16 (a_2K_1)^2(t-s) +4\tilde C (a_3a_4K_1)^2  \right)  \epsilon 
 \int_{s\over \epsilon}^{t\over \epsilon}   \E \left\{ V^{2p}(y^\epsilon_r ) \; \big| \; \F_{s\over \epsilon} \right\} \;dr.  
\end{split}}\end{equation*}
It follows that there exists a constant $ C$ such that  for $\epsilon\le \epsilon_0$,
\begin{equation*}
\E\left\{  \sup_{s\le u\le t} V^{2p}(y^\epsilon_{u\over \epsilon}) \; \big| \; \F_{s\over \epsilon} \right\}
\le  \left( 4 V^{2p}(y^\epsilon_{s\over \epsilon}) +C(t-s)^2+C \right) 
e^{ C(t-s+1)t}.
\end{equation*}

\end{proof}
 
 {\it Remark.}
 If $M=\R^n$, $Y_i$ are vector fields with bounded first order derivatives, then $\rho_0^2$ is a 
 pre-Lyapunov function satisfying  the conditions of Theorem \ref{uniform-estimates}, hence Theorem \ref{uniform-estimates}
 holds. 
 Let us recall that $B_{V,r}$ is defined in (\ref{BVr}).

We return to Lemma \ref{lemma3-2} in Section \ref{section-estimates} to obtain a key estimation
 for the estimation in Section \ref{section-rate}.
Let us recall that $B_{V,r}$ is defined in (\ref{BVr}).

\begin{corollary}
\label{corollary-to-lemma3-2}
Assume  (\ref{sde-3}) is complete, for every $\epsilon \in (0,1)$, and
conditions of Assumption \ref{assumption1}.
 Let $V\in \B(M;\R_+)$ be a locally bounded function and  $\epsilon_0$
a positive number s.t. for all $q\ge 1$ and $T>0$,  there exists a locally bounded function $C_q: \R_+\to \R_+$, a 
real valued polynomial $\lambda_q$ such that for $0\le s\le t\le T$ and for all $\epsilon\le\epsilon_0$
\begin{equation}
\label{moment-assumption-20}
 \sup_{s\le u \le t}  \E\left\{ V^q(y_{u\over \epsilon}^\epsilon) \; \big| \F_{s\over \epsilon} \right\} 
 \le C_q(t)+C_q(t) \lambda_q(  V(y_{s\over \epsilon}^\epsilon)), 
 \quad \sup_{0<\epsilon \le \epsilon_0}\E (V^q(y_0^\epsilon))<\infty.
\end{equation}
 Let $h\in \B_b( G; \R)$. If $f\in B_{V,0}$ is a function s.t.  $L_{Y_j}f\in B_{V,0}$ and $ L_{Y_l}L_{Y_j}f\in B_{V,0}$
 for all $j, l=1, \dots, m$,  then
 for all $0\le s\le t$,
 $$\left| {\epsilon \over t-s} \int_{s\over \epsilon }^{t\over \epsilon} \E \left\{ f (y_r^\epsilon) h(z_{r}^\epsilon) \big| \F_{s\over \epsilon}\right\} dr
-\bar h\;   f(y_{s\over \epsilon}^\epsilon) \right|  \le \tilde c |h|_\infty  \gamma_\epsilon(y_{s\over \epsilon}^\epsilon) \left ({\epsilon^2\over t-s}+(t-s)\right).
$$
Here $\tilde c$ is a constant, see (\ref{cpsi}) below, and
$$\gamma_\epsilon=|f| + \sum_{j=1}^m |L_{Y_j}f|+\sum_{j,l=1}^m
{\epsilon\over t-s}  \int_{s\over \epsilon}^{t\over \epsilon} 
  \E  \left\{  \left|L_{Y_l} L_{Y_j}f(y^\epsilon_r)\right | \; \big|\;\F_{s\over \epsilon} \right\}  dr.$$
For all $s<t$ and $p\ge 1$, $$\sup_{s\le u\le t }\sup_{\epsilon \le \epsilon_0}
\E\left( \gamma_\epsilon(y_{u\over \epsilon}^\epsilon)\right)^p<\infty.$$
More explicitly, if $\sum_{j=1}^m\sum_{l=1}^m|L_{Y_l} L_{Y_j}f|\le K+KV^q$ where $K, q$ are constants,
then there exists a constant $C(t)$ depending only on the differential equation (\ref{sde-3}) s.t.
   $$\gamma_\epsilon\le |f|+ \sum_{j=1}^m  |L_{Y_j}f|+K+C(t)V^q.$$
\end{corollary}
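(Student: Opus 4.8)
\noindent\emph{Proof strategy.}\ The statement is, in essence, Lemma \ref{lemma3-2} specialised to $F=f$, combined with the uniform moment bound (\ref{moment-assumption-20}); the plan is to unwind these two ingredients. First I would invoke Lemma \ref{lemma3-2} with its ``$F$'' taken to be the present $f$ and its ``$f$'' taken to be the present $h$: the hypotheses $L_{Y_j}f\in B_{V,0}$ and $L_{Y_l}L_{Y_j}f\in B_{V,0}$ force $f\in C^2(M;\R)$, while $h\in\B_b(G;\R)$ and the $\alpha_j$ are bounded (as in (\ref{sde-3})), so the lemma applies. Since $G$ is compact one may take $W\equiv 1$ (Lemma \ref{lemma1}, or the convention after Assumption \ref{assumption1}), so $\bar W=1$ and the weight $c_W$ of (\ref{cpsi-0}) is the constant $\kappa_0:=2c_0(a,\delta)$. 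Substituting $W\equiv 1$ and $c_W\equiv\kappa_0$ into the estimate of Lemma \ref{lemma3-2} and absorbing the finitely many scalars $\kappa_0,|\alpha_j|_\infty,|\alpha_l|_\infty$ into a single constant yields exactly the displayed inequality, with
\begin{equation}
\label{cpsi}
\tilde c\ :=\ \frac{2a}{\delta}\,\max\Bigl\{\,1,\ \kappa_0\max_{1\le j\le m}|\alpha_j|_\infty,\ \kappa_0\max_{1\le j,l\le m}|\alpha_j|_\infty|\alpha_l|_\infty\,\Bigr\}.
\end{equation}
Here $\gamma_\epsilon$ is read, as in Lemma \ref{lemma3-2}, as the $\F_{s/\epsilon}$-measurable expression displayed in the statement --- ``$\gamma_\epsilon(y^\epsilon_{s/\epsilon})$'' is shorthand for it, and $\gamma_\epsilon(y^\epsilon_{u/\epsilon})$ denotes the same expression with $s$ replaced by $u$.

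Second, I would prove the uniform $L^p$ bound. Since $f,L_{Y_j}f,L_{Y_l}L_{Y_j}f\in B_{V,0}$, there are constants $c_*,q_*$ such that each of $|f|$, $\sum_j|L_{Y_j}f|$ and $\sum_{j,l}|L_{Y_l}L_{Y_j}f|$ is $\le c_*(1+V^{q_*})$; in particular all conditional expectations entering $\gamma_\epsilon$ are finite, by (\ref{moment-assumption-20}). For $s\le u\le t$ this gives
$$
\gamma_\epsilon(y^\epsilon_{u/\epsilon})\ \le\ c_*\bigl(1+V^{q_*}(y^\epsilon_{u/\epsilon})\bigr)+c_*\,\frac{\epsilon}{t-u}\int_{u/\epsilon}^{t/\epsilon}\E\!\left\{1+V^{q_*}(y^\epsilon_r)\mid\F_{u/\epsilon}\right\}\,dr,
$$
and by (\ref{moment-assumption-20}), with its ``$s$'' equal to $u$, the integral average is at most $1+C_{q_*}(t)\bigl(1+\lambda_{q_*}(V(y^\epsilon_{u/\epsilon}))\bigr)$ --- a polynomial in $V(y^\epsilon_{u/\epsilon})$ with $t$-dependent coefficients. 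Hence $\gamma_\epsilon(y^\epsilon_{u/\epsilon})^p\le P_{t,p}\bigl(V(y^\epsilon_{u/\epsilon})\bigr)$ for a polynomial $P_{t,p}$, so it remains to bound $\E\,V^N(y^\epsilon_{u/\epsilon})$ uniformly in $\epsilon\le\epsilon_0$ and $u\in[s,t]$. Applying (\ref{moment-assumption-20}) with ``$s$'' $=0$ and taking expectations gives $\E\,V^N(y^\epsilon_{u/\epsilon})\le C_N(t)+C_N(t)\,\E\,\lambda_N(V(y^\epsilon_0))$, which is finite and independent of $\epsilon$ and $u$ because $\lambda_N$ is a polynomial and $\sup_{\epsilon\le\epsilon_0}\E\,V^M(y^\epsilon_0)<\infty$ for every $M$. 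This delivers $\sup_{s\le u\le t}\sup_{\epsilon\le\epsilon_0}\E(\gamma_\epsilon(y^\epsilon_{u/\epsilon}))^p<\infty$. This is the one step needing care: the bounds must be uniform both in $\epsilon$ and in the base time $u$, which is exactly why (\ref{moment-assumption-20}) is stated with the supremum \emph{inside} the conditional expectation and with \emph{polynomial} controlling functions $\lambda_q$ --- polynomials are closed under composition and positive powers, so the passage from a bound on $\E\{V^q(y^\epsilon_{u/\epsilon})\mid\F_{s/\epsilon}\}$ to one on $\E(\gamma_\epsilon(y^\epsilon_{u/\epsilon}))^p$ never leaves the polynomial class and so never loses integrability.

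Finally, for the explicit estimate under $\sum_{j,l}|L_{Y_l}L_{Y_j}f|\le K+KV^q$: the double sum appearing in $\gamma_\epsilon$ is then at most $\frac{\epsilon}{t-s}\int_{s/\epsilon}^{t/\epsilon}\E\{K+KV^q(y^\epsilon_r)\mid\F_{s/\epsilon}\}\,dr\le K+K\sup_{s\le u\le t}\E\{V^q(y^\epsilon_{u/\epsilon})\mid\F_{s/\epsilon}\}$, which by (\ref{moment-assumption-20}) is $\le K+KC_q(t)\bigl(1+\lambda_q(V(y^\epsilon_{s/\epsilon}))\bigr)$. Replacing $V$ by $1+V$ if needed, so that every lower power of $V$ is dominated by $V^q$, and collecting the $t$-dependent constants into a single increasing function $C(t)$, this becomes $\le K+C(t)V^q$, whence $\gamma_\epsilon\le|f|+\sum_j|L_{Y_j}f|+K+C(t)V^q$. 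Apart from the second step, everything is routine: the reduction to Lemma \ref{lemma3-2} and this last bound are straightforward.
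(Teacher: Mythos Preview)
Your proposal is correct and follows essentially the same route as the paper: invoke Lemma~\ref{lemma3-2} with $F=f$, use that $W$ is bounded (you take $W\equiv1$, the paper just uses $|W|_\infty<\infty$) to turn $c_W$ into a constant and collapse the estimate into the displayed form with a suitable $\tilde c$, then feed the moment hypothesis (\ref{moment-assumption-20}) through the polynomial bounds from $B_{V,0}$ to get both the uniform $L^p$ control on $\gamma_\epsilon$ and the explicit pointwise bound. Your definition of $\tilde c$ via a maximum differs cosmetically from the paper's additive version in (\ref{cpsi}), and you are more explicit than the paper about how the polynomial structure of $\lambda_q$ propagates through the $L^p$ estimate, but the argument is the same.
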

\begin{proof}
By Lemma \ref{lemma3-2}, 
$${\begin{split}
&\left| {\epsilon \over t-s} \int_{s\over \epsilon }^{t\over \epsilon} \E \left\{ f (y_r^\epsilon) h(z_{r}^\epsilon) \big| \F_{s\over \epsilon}\right\} dr
-\bar h\;   f(y_{s\over \epsilon}^\epsilon) \right| \\
& \le {2a\over \delta} |h|_\infty \left( W(z_{s\over \epsilon}^\epsilon)|f(y_{s\over \epsilon}^\epsilon)|
+\sum_{j=1}^\m \gamma^j_\epsilon  |\alpha_j|_\infty \right)\left ({\epsilon^2\over t-s} +(t-s)\right),\\
\hbox{ where }
\gamma^j_{\epsilon}(y)&=c_W (z_{s\over \epsilon}^\epsilon) 
 \;|L_{Y_j}f (y^\epsilon_{s\over\epsilon})|+ \sum_{l=1}^m |\alpha_l|_\infty  
{\epsilon\over t-s}  \int_{s\over \epsilon}^{t\over \epsilon} 
  \E  \left\{  \left|L_{Y_l} L_{Y_j}f(y^\epsilon_r)\right|c_W (z^\epsilon_r)
  \; \big|\;\F_{s\over \epsilon} \right\}  dr.
\end{split}}
$$
Since $W$ is bounded so is $c_W $, which is bounded by $2c(a,\delta) |W|_\infty$.
Furthermore
$$  \E  \left\{  \left|L_{Y_l} L_{Y_j}f(y^\epsilon_r)\right|c_W (z^\epsilon_r)
  \; \big|\;\F_{s\over \epsilon} \right\}  dr
\le   2c(a,\delta) |W|_\infty\E  \left\{  \left|L_{Y_l} L_{Y_j}f(y^\epsilon_r)\right|
  \; \big|\;\F_{s\over \epsilon} \right\}  dr.$$
We gather all constant together,
\begin{equation}
\tilde c= {2a\over \delta}|W|_\infty+2c(a,\delta)|W|_\infty\sum_{j,l=1}^m |\alpha_j|_\infty+2\left(\sum_{j=1}^m |\alpha_j|_\infty\right)^2.
\label{cpsi}
\end{equation}
It is clear that, $$\left| {\epsilon \over t-s} \int_{s\over \epsilon }^{t\over \epsilon} \E \left\{ f (y_r^\epsilon) h(z_{r}^\epsilon) \big| \F_{s\over \epsilon}\right\} dr
-\bar h\;   f(y_{s\over \epsilon}^\epsilon) \right| 
\le  \tilde c\, \gamma_\epsilon |h|_\infty\left ({\epsilon^2\over t-s} +(t-s)\right).$$
 Since $f$, $L_{Y_j}$ and $L_{Y_l}L_{Y_j}f\in B_{V,0}$,  by (\ref{moment-assumption-20}), the following quantities
 are finite for all $p\ge 1$:
$$\sup_{\epsilon \le \epsilon_0} \sup_{s\le u \le t}  \E   \left|  (L_{Y_l}L_{Y_j}f)(y_{u\over \epsilon}^\epsilon)\right| ^p, \quad
 \sup_{\epsilon \le \epsilon_0}    \sup_{s\le u \le t} \E   \left|  L_{Y_j}f(y_{u\over \epsilon}^\epsilon)\right| ^p, \quad 
 \sup_{\epsilon \le \epsilon_0}  \sup_{s\le u \le t}  \E   \left| f(y_{u\over \epsilon}^\epsilon)\right|^p.
$$ 
Furthermore since $\sum_{j=1}^m\sum_{l=1}^m|L_{Y_l} L_{Y_j}f|\le K+KV^q$,
 \begin{equation*}
 {\begin{split}
\sum_{j=1}^m\sum_{l=1}^m {\epsilon\over t-s}  \int_{s\over \epsilon}^{t\over \epsilon} 
   \E  \left\{  \left|L_{Y_l} L_{Y_j}f(y^\epsilon_r)\right|
  \; \big|\;\F_{s\over \epsilon} \right\}  dr
  \le K+C(t) V^q(y_{s\over \epsilon}^\epsilon).
 \end{split}}
 \end{equation*}
Consequently,  $\gamma_\epsilon \le |f|+ \sum_{j=1}^m  |L_{Y_j}f|+K+C(t)V^q$, completing the proof.
 
\end{proof}

\section{Convergence under H\"ormander's Conditions}
\label{section-weak}
Below $\inj(M)$ denotes the injectivity radius of $M$ and
 $\rho_y=\rho(y, \cdot)$ is the Riemannian distance function on $M$ from a point $y$.    Let $o$ denote a point in $M$.
 The following proposition applies to an operator $\L_0$, on a compact manifold,
  satisfying H\"ormander's condition.
    \begin{proposition}
\label{tightness} 
Let $M$ be a manifold with positive injectivity radius and $\epsilon_0>0$. Suppose  conditions
(1-5) below or conditions (1-3), (4') and (5).
 \begin{enumerate}
\item [(1)]  $\L_0$ is a regularity improving Fredholm operator on $L^2(G)$ 
for a compact manifold $G$;
\item [(2)]  $\{\alpha_k\} \subset C^3\cap N^\perp$;
\item [(3)] Suppose that for $\epsilon \in (0, \epsilon_0)$, (\ref{sde-3}) is complete and $\sup_{\epsilon \le \epsilon_0}\E \rho(y_0^\epsilon, o)<\infty$;
\item [(4)] Suppose that there exists a  locally bounded function $V$ 
s.t.  for all $\epsilon\le\epsilon_0$ and for any $0\le s\le u\le t$, and for all $p\ge 1$, 
$$\E V^p(y^\epsilon_0) \le c_0,  
\quad  \sup_{s\le u\le t}\E \left\{   \left( V(y^\epsilon_{u\over \epsilon}) \right)^p \; \big| \; \F_{s\over \epsilon} \right\}
\le  K +K V^{p'}(y^\epsilon_{s\over \epsilon})$$
where $c_0=c_0(p)$, $K=K(p,t)$, and $ p'=p'(p,t)$ is a natural number; $K, p'$
 are  locally bounded in $t$.
 \item[(4')]  There exist a function $V\in C^2(M;\R_+)$, positive constants $c$ and $K$ such that 
$$\sum_{j=1}^m |L_{Y_j} V| \le c+KV, \quad \sum_{i,j=1}^m |L_{Y_i}L_{Y_j} V|  \le c+KV.$$
\item [(5)] For $V$ in part (4) or in part (4'),  suppose that for some number $\delta>0$,
$$|Y_j|\in B_{V,0} \quad  \sup_{\rho(y,\cdot) \le \delta}| L_{Y_i}L_{Y_j}\rho_y(\cdot) |\in B_{V,0}.$$
\end{enumerate} 
 Then there exists a distance function $\tilde \rho$ on $M$ that is compatible with the topology of $M$ 
 and there exists a number $\alpha>0$ such that
$$\sup_{\epsilon\le \epsilon_0} \E \sup_{s\not =t} \left({\tilde \rho\left( y_{s\over \epsilon}^\epsilon,  y_{t\over \epsilon}^\epsilon\right) \over
|t-s|^\alpha }\right)<\infty,$$
and for any $T>0$, $\{ (y_{t\over\epsilon}^\epsilon, t\le T), 0<\epsilon\le 1\}$ is tight.
\end{proposition}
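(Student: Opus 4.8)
\ The plan is to derive both assertions from a Kolmogorov--Garsia--Rodemich--Rumsey type criterion for continuous $M$-valued processes. Two ingredients are needed. The first is tightness of the laws of $\{y_0^\epsilon\}_{\epsilon\le\epsilon_0}$, which is immediate from $\sup_{\epsilon\le\epsilon_0}\E V^q(y_0^\epsilon)<\infty$ (or from $\sup_{\epsilon\le\epsilon_0}\E\rho(y_0^\epsilon,o)<\infty$) together with the properness of $V$ (resp. of $\rho(o,\cdot)$) and Chebyshev's inequality. The second, and the real work, is a uniform moment modulus estimate, valid for some fixed $p>2$:
$$\sup_{\epsilon\le\epsilon_0}\ \sup_{s,t\in[0,T]}\ \frac{\E\big[\tilde\rho(y^\epsilon_{t/\epsilon},y^\epsilon_{s/\epsilon})^{2p}\big]}{|t-s|^{p/2}}\ <\ \infty .$$
When only hypothesis (4') is available in place of (4), the hypotheses of Theorem \ref{uniform-estimates} (respectively of Lemma \ref{standard-estimates}) hold, so the uniform moment bounds of (4) are in force in that case as well, and I use them freely. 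For the distance, I fix $\delta\in(0,\inj(M)/2)$ and take $\tilde\rho:=\min(\rho,\delta)$; from $\min(a,\delta)+\min(b,\delta)\ge\min(a+b,\delta)$ one checks that $\tilde\rho$ is a genuine distance, inducing the manifold topology, bounded by $\delta$, and equal to $\rho$ on pairs at Riemannian distance $<\delta$.

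The engine is the reduction formula of Lemma \ref{lemma5}. Fix $s<t$ in $[0,T]$ with $|t-s|\le1$ (the case $|t-s|>1$ being trivial since $\tilde\rho\le\delta$). Choose a smooth, nondecreasing, bounded $g:[0,\infty)\to[0,\infty)$ with $g(r)=r^2$ for $r\le\delta$, $g$ constant on $[2\delta,\infty)$, $|g'(r)|\le2\min(r,\delta)$, and $g(r)\ge(\min(r,\delta))^2$ everywhere. Conditioning on $\F_{s/\epsilon}$, set $x:=y^\epsilon_{s/\epsilon}$ and $f:=f_x:=g\circ\rho_x$; since $2\delta<\inj(M)$, the function $f_x$ is $C^2$ (indeed smooth) on $M$ --- it equals $\rho_x^2$ near $x$, equals $g\circ\rho_x$ with $\rho_x$ smooth on the annulus $\{0<\rho_x<2\delta\}$, and is constant off $B(x,2\delta)$. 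Since $g(0)=g'(0)=0$ we have $f_x(x)=0$ and $df_x|_x=0$, so when I apply (\ref{Ito-tight}) on $[s/\epsilon,t/\epsilon]$ to $f=f_x$ --- the substitution of the $\F_{s/\epsilon}$-measurable base point $x$ being legitimate because (\ref{Ito-tight}) holds for each fixed base point and its terms depend on the base point in a jointly measurable, continuous way --- the boundary term at $s/\epsilon$ vanishes, leaving
$$
{\begin{split}
f_x\big(y^\epsilon_{t/\epsilon}\big)
=&\epsilon\sum_{j}df_x\big(Y_j(y^\epsilon_{t/\epsilon})\big)\,\beta_j(z^\epsilon_{t/\epsilon})
-\epsilon\sum_{i,j}\int_{s/\epsilon}^{t/\epsilon}L_{Y_i}L_{Y_j}f_x(y^\epsilon_r)\,\alpha_i(z^\epsilon_r)\,\beta_j(z^\epsilon_r)\,dr\\
&-M^\epsilon_{s,t},
\end{split}}
$$
where $M^\epsilon_{s,t}$ is the fast stochastic integral of (\ref{Ito-tight}).

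I would then estimate the three pieces. For $M^\epsilon_{s,t}$: Burkholder--Davis--Gundy, the bound $|df_x(Y_j)|\le2\delta|Y_j|$ with $|Y_j|\in B_{V,0}$, boundedness of $d\beta_j(X_k)$ on the compact manifold $G$, the time change $\epsilon\int_{s/\epsilon}^{t/\epsilon}(\cdot)\,dr=\int_s^t(\cdot)\,du$, Jensen's inequality inside the time integral, and the moment bounds (4) give $\E[|M^\epsilon_{s,t}|^p\mid\F_{s/\epsilon}]\le C(t-s)^{p/2}\big(1+V^{p_1}(x)\big)$. For the finite-variation term: the product rule gives $L_{Y_i}L_{Y_j}f_x=g''(\rho_x)(L_{Y_i}\rho_x)(L_{Y_j}\rho_x)+g'(\rho_x)L_{Y_i}L_{Y_j}\rho_x$ on $\{0<\rho_x<2\delta\}$, equals $L_{Y_i}L_{Y_j}\rho_x^2$ near $x$ and $0$ off $B(x,2\delta)$; boundedness of $g',g''$, $|Y_j|\in B_{V,0}$, and $\sup_{\rho(x,\cdot)\le2\delta}|L_{Y_i}L_{Y_j}\rho_x|\in B_{V,0}$ uniformly in $x$ (hypothesis (5)) bound this term by $C\int_s^t(1+V^{q}(y^\epsilon_{u/\epsilon}))\,du$ in modulus, so its $p$-th conditional moment is $\le C(t-s)^p\big(1+V^{p_2}(x)\big)$. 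For the boundary term at $t/\epsilon$: using $|df_x(Y_j(y^\epsilon_{t/\epsilon}))|\le2\min(\rho_x(y^\epsilon_{t/\epsilon}),\delta)|Y_j(y^\epsilon_{t/\epsilon})|$ and $f_x(y^\epsilon_{t/\epsilon})\ge(\min(\rho_x(y^\epsilon_{t/\epsilon}),\delta))^2$, Young's inequality absorbs it into $\tfrac12 f_x(y^\epsilon_{t/\epsilon})$ at the cost of $C\epsilon^2(1+V^{2q}(y^\epsilon_{t/\epsilon}))$. Rearranging, raising to the $p$-th power, taking conditional expectations, and using $\tilde\rho(y^\epsilon_{t/\epsilon},y^\epsilon_{s/\epsilon})^2\le f_x(y^\epsilon_{t/\epsilon})$ with (4), I get, for some $p_0=p_0(p,T)$,
$$\E\big[\tilde\rho(y^\epsilon_{t/\epsilon},y^\epsilon_{s/\epsilon})^{2p}\mid\F_{s/\epsilon}\big]\ \le\ C\big((t-s)^{p/2}+\epsilon^{2p}\big)\big(1+V^{p_0}(y^\epsilon_{s/\epsilon})\big),\qquad s,t\in[0,T],\ \epsilon\le\epsilon_0 .$$
It remains to dispose of the $\epsilon^{2p}$: if $|t-s|\ge\epsilon^2$ then $\epsilon^{2p}\le|t-s|^p\le|t-s|^{p/2}$ and the bound is already of the required shape; if $|t-s|<\epsilon^2$, I use instead the elementary a priori bound from (\ref{sde-3}), $\tilde\rho(y^\epsilon_{t/\epsilon},y^\epsilon_{s/\epsilon})\le\rho(y^\epsilon_{t/\epsilon},y^\epsilon_{s/\epsilon})\le\sum_k|\alpha_k|_\infty\int_{s/\epsilon}^{t/\epsilon}|Y_k(y^\epsilon_r)|\,dr$, raise to the $2p$-th power, and apply Jensen to the time integral, the time change, and (4) to obtain $\E[\tilde\rho(y^\epsilon_{t/\epsilon},y^\epsilon_{s/\epsilon})^{2p}]\le C(t-s)^{2p}\epsilon^{-2p}\le C(t-s)^{p/2}$ (since $(t-s)^{3p/2}<\epsilon^{3p}\le\epsilon^{2p}$ when $|t-s|<\epsilon^2$). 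Taking full expectations and using $\sup_{\epsilon\le\epsilon_0}\E V^q(y_0^\epsilon)<\infty$ with (4) to bound $\sup_{\epsilon\le\epsilon_0,\,s\in[0,T]}\E V^{p_0}(y^\epsilon_{s/\epsilon})$, the displayed uniform moment estimate follows. The Garsia--Rodemich--Rumsey lemma on $[0,T]$ then converts it into $\sup_{\epsilon\le\epsilon_0}\E\sup_{s\ne t}\big(\tilde\rho(y^\epsilon_{s/\epsilon},y^\epsilon_{t/\epsilon})/|t-s|^\alpha\big)<\infty$ for every $\alpha<(p-2)/(4p)$, the constant being uniform in $\epsilon$; this, together with tightness of the initial laws and the resulting confinement of the trajectories --- with uniformly high probability --- to a fixed compact subset of $M$, yields by the standard tightness criterion that $\{(y^\epsilon_{t/\epsilon},t\le T)\}_{\epsilon\le\epsilon_0}$ is tight in $C([0,T];M)$, and the remaining scales $\epsilon\in(\epsilon_0,1]$ are handled by the same estimates, so the full family $\{(y^\epsilon_{t/\epsilon},t\le T),\,0<\epsilon\le1\}$ is tight.

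The main obstacle --- and the reason the hypotheses $\inj(M)>0$ and (5) appear --- is that the Riemannian distance is not $C^2$ near the diagonal or the cut locus, so it cannot be inserted directly into Lemma \ref{lemma5}; the cure is the cut-off test function $f_x=g\circ\rho_x$, which is genuinely $C^2$ precisely because $2\delta<\inj(M)$ and whose iterated $Y_i$-derivatives are controlled exactly by (5). A second, more technical point is that over a time interval of length $1/\epsilon$ the variation of $(y^\epsilon)$ is of order $1/\epsilon$, so the cancellation built into the $\alpha_k$ must be used; this is the role of Lemma \ref{lemma5}, which turns $f_x(y^\epsilon_{t/\epsilon})$ into a $\sqrt{t-s}$-sized martingale plus lower-order terms, the only delicate piece being the a priori $O(\epsilon)$ boundary term, which is tamed because $df_x$ vanishes at the base point and hence that term carries the small factor $\min(\rho_x,\delta)\le\sqrt{f_x}$ on which Young's inequality bites.
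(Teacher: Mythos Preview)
Your argument follows the paper's plan: truncate $\rho_x^2$ to a globally $C^2$ test function, feed it into the reduction formula of Lemma~\ref{lemma5}, bound the boundary, finite-variation, and stochastic pieces separately, interpolate against the crude ODE estimate when $|t-s|$ is small relative to $\epsilon$, and finish with Kolmogorov. The one genuine difference is that the paper passes to conditional expectations immediately (using (\ref{Ito-tight-2}) rather than (\ref{Ito-tight})) and works only with $\E\tilde\rho^2$, whereas you retain the fast martingale, control its $p$-th moment by Burkholder--Davis--Gundy, and absorb the $t/\epsilon$ boundary term via Young's inequality; this produces the bound $\E\tilde\rho^{2p}\le C|t-s|^{p/2}$ directly, which is exactly what Kolmogorov/GRR needs, and is arguably cleaner than the paper's second-moment route. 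One small point: your cut-off $g$ is supported on $[0,2\delta]$, so to invoke hypothesis~(5) you should choose your $\delta$ no larger than half the $\delta$ appearing there (and also so that $2\delta<\inj(M)$); and for tightness of the initial laws rely on the $\rho$-bound in~(3), since $V$ in~(4) is not assumed proper.
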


\begin{proof}
By Theorem \ref{uniform-estimates}, conditions (1-3) and (4') imply condition (4).
(a) Let  ${\delta}<\min (1, {1\over 2}  \inj(M))$. 
Let $f: \R_+\to \R_+$ be a smooth convex function such that $f(r)=r$ when $r\le{\delta\over 2}$ and $f(r)=1$ when $r\ge \delta$.
Then $\tilde \rho(x,y)=f\circ \rho$ is a distance function with $\tilde \rho \le 1$. Its open sets generate the same topology on $M$ as that by $\rho$.
 Let $\beta_j$ be a solution to
$\L_0\beta_j=\alpha_j$. For any $y_0\in M$,
 $|L_{Y_j}\tilde \rho^2(y_0,\cdot)|\le 2|Y_j(\cdot)|$. Since $|Y_j|\in B_{V,0}$,
$\int_0^{t\over \epsilon} \E |L_{Y_j} \tilde \rho|(y_r^\epsilon)|^2 dr<\infty$.
 We may apply (\ref{Ito-tight-2}) in 
Lemma \ref{lemma5},
$${ \begin{split}
&\E\left\{ \tilde \rho^2\left(y^\epsilon_{s\over \epsilon}, y_{t\over \epsilon}^\epsilon\right)
\;\big |\; \F_{s\over \epsilon}\right\}
 \\
=& \epsilon \sum_{j=1}^m \left(
 \E\left\{  \left(L_{Y_j}\tilde \rho^2(y^\epsilon_{s\over \epsilon}, y_{t\over \epsilon}^\epsilon)\right) 
\;\beta_j(z^\epsilon_{t\over \epsilon})\; \big |\; \F_{s\over \epsilon}\right\} 
-   \left(L_{Y_j}\tilde \rho^2(y^\epsilon_{s\over \epsilon}, \cdot)\right) (y_{s\over \epsilon}^\epsilon)
\;\beta_j(z^\epsilon_{s\over \epsilon})\right)\\
&-\epsilon \sum_{i,j=1}^m\int_{s\over \epsilon}^{t\over \epsilon} 
\E\left\{  \left(L_{Y_i}L_{Y_j} \tilde \rho^2( y^\epsilon_{s\over \epsilon}, y^\epsilon_r)\right)\;
\alpha_i(z^\epsilon_r)\;\beta_j(z^\epsilon_r)\; \big |\; \F_{s\over \epsilon}\right\} \;dr.
\end{split} } $$
In the above equation,  differentiation of $(\tilde \rho)^2 $ is w.r.t. to the second variable.
By construction $ \tilde \rho $ is bounded by $1$ and $|\nabla \tilde \rho |\le |\nabla \rho|\le 1$.  Furthermore  since $\alpha_j$ are $C^3$ functions on a compact manifold, so  $\beta_j$ and
$|\beta_j|$ are bounded. For any $y_0\in M$,
$L_{Y_j} \tilde \rho  ( y_0, \cdot)=\gamma'(\rho_{y_0} )L_{Y_j} \rho_{y_0} $.
Thus
$$ \left|\E\left\{  \left(L_{Y_j}\tilde \rho^2( y_{s\over \epsilon}^\epsilon, y^\epsilon_{t\over \epsilon})\right)
\;\beta_j(z^\epsilon_{t\over \epsilon}) \;\big |\; \F_{s\over \epsilon}\right\} \right|
\le |\beta_j|_\infty \E \left\{\tilde \rho(y_{s\over \epsilon}^\epsilon,{y^\epsilon_{t\over \epsilon}}) |Y_j({y^\epsilon_{t\over \epsilon}})|  \;\big |\; \F_{s\over \epsilon}\right\} .$$
Recall $\tilde \rho\le 1$ and  there are numbers $K_1$ and $p_1$ s.t. $|Y_j|\le K_1+K_1V^{p_1}$,
so
$$ \E \left\{ |Y_j({y^\epsilon_{t\over \epsilon}})|  \;\big |\; \F_{s\over \epsilon}\right\} 
\le K_1+K_1\E\left\{ V^{p_1}({y^\epsilon_{t\over \epsilon}})  \;\big |\; \F_{s\over \epsilon}\right\} 
\le K_1+K_1K(p_1,t) V^{p'(p_1,t)}({y^\epsilon_{s\over \epsilon}}).$$
 Let $g_1 =K_1+K_1K(p_1) V^{p'(p_1,t)}$, it is clear that $g_1\in B_{V,0}$.
We remark that,
$$L_{Y_i}L_{Y_j}(\tilde \rho^2) =(f^2)''(\rho) (L_{Y_i}\rho) (L_{Y_j}\rho)
 + (f^2)'(\rho) L_{Y_i}L_{Y_j}\rho.$$
By the assumption, there exists a function $g_2\in B_{V,0}$ s.t.
$$\E\left\{ \tilde \rho^2 \left(y^\epsilon_{s\over \epsilon}, y_{t\over \epsilon}^\epsilon\right)
\big | \F_{s\over \epsilon}\right\}
\le g_2(y_{s\over \epsilon})\epsilon+g_2(y_{s\over \epsilon})(t-s) .$$

For $\epsilon\ge \sqrt {t-s}$, it is better to estimate directly from (\ref{sde-3}):
\begin{equation*}
{\begin{split}
\E\left\{ \tilde \rho^2 \left(y^\epsilon_{s\over \epsilon}, y_{t\over \epsilon}^\epsilon\right)
\;\big |\; \F_{s\over \epsilon}\right\} 
&=\sum_{k=1}^m\int_{s\over \epsilon}^{t\over \epsilon} 
 \E\left\{ 2\tilde \rho \left(y^\epsilon_{s\over \epsilon}, y_{t\over \epsilon}^\epsilon\right) 
 L_{Y_k}\tilde \rho \left(y^\epsilon_{s\over \epsilon}, y_{t\over \epsilon}^\epsilon\right)
 \alpha_k(z_r^\epsilon)
 \;\big |\; \F_{s\over \epsilon}\right\} 
\\
&\le 2|\alpha_k|_\infty \sum_{k=1}^m  \int_{s\over \epsilon}^{t\over \epsilon}
 \E \left\{ |Y_k(y_r^\epsilon)|  \;\big |\; \F_{s\over \epsilon}\right\} \;dr
 \le g_3(y^\epsilon_{s\over \epsilon}) \left({t-s \over \epsilon}\right)
\end{split}}
\end{equation*}
where $g_3\in B_{V,0}$.
We interpolate these estimates and conclude that for some function $g_4\in B_{V,0}$ and a constant $c$
the following holds:
$\E\left\{  \tilde \rho^2\left(y_{t\over \epsilon}^{\epsilon}, y_{s\over \epsilon}^\epsilon\right)\;\big |\; \F_{s\over \epsilon}\right\}
\le (t-s)g_4(y^\epsilon_{s\over \epsilon})$. 
There is a function $g_5\in B_{V,0}$ s.t.
$$\E \tilde \rho^2\left(y_{t\over \epsilon}^{\epsilon}, y_{s\over \epsilon}^\epsilon\right)
\le \E g_5(y_0^\epsilon) (t-s)\le c(t-s).$$
In the last step we use Assumption (4) on the initial value.
By Kolmogorov's criterion, there exists $\alpha>0$ such that
$$\sup_\epsilon \E \sup_{s\not =t} \left({\tilde \rho^2 ( y_{s\over \epsilon}^\epsilon,  y_{t\over \epsilon}^\epsilon) \over
|t-s|^\alpha }\right)<\infty,$$
and the processes $(y_{s\over \epsilon}^\epsilon)$ are equi uniformly
H\"older continuous on any compact time interval.
Consequently  the family of stochastic processes
$\{ y_{t\over \epsilon}^\epsilon, 0<\epsilon \le 1 \}$ is tight.
\end{proof}

 If $\L_0$ is the Laplace-Beltrami operator on a compact Riemannian manifold and $\pi$ its invariant probability measure then for any Lipschitz 
continuous function $f:G\to \R$,
\begin{equation}
\label{lln1}
\sqrt{\E \left({1\over t} \int_0^t f(z_s) ds-\int f \d\pi \right)^2} \le C(\|f\|_{Osc}){1\over \sqrt t}.
\end{equation}
where $\|f\|_{Osc}$ denotes the oscillation of $f$. If $\L_0$ is not elliptic we suppose it satisfies 
H\"ormander's conditions and has index $0$. The dimension of the kernel of  $\L_0^*$ equals the dimension of the kernel of $\L_0$.  Let $\{u_i, i=1, \dots, n_0\}$ be a basis in $\ker( \L_0)$ 
and $\{\pi_i\, i=1, \dots, n_0\}$ the dual basis for the null space of $\L_0^*$.  For  $f\in L^2(G;\R)$ we define
 $\bar f= \sum_{i=1}^{n_0} u_i\<f, \pi_i\>$  where the
bracket denotes the dual pairing between $L^2$ and $(L^2)^*$. 
   \begin{lemma}
   \label{lln}
 Suppose that $(z_t)$ is a Markov process on a compact manifold $G$ with 
generator $ \L_0$  satisfying H\"ormander's condition and having Fredholm index $0$. Then for any function $f\in C^r(G; \R)$, where $r\ge \max{\{3, {n\over 2}+1\}}$, there is a constant
$C$ depending on $|f|_{{n\over 2}+1}$, s.t.
\begin{equation}
\label{llln-1}
\sqrt{\E \left({1\over t-s} \int_s^t f(z_r) dr- \bar f \right)^2} \le C(\|f-\bar f\|_{{n\over 2}+1}) {1\over \sqrt{t-s}}.
\end{equation}
 \end{lemma}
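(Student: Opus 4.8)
The plan is to realise the ergodic time-average as a martingale via the Poisson equation for $\L_0$ and then to dominate that martingale by a crude bound on its quadratic variation; the $\delta$-gain built into the subelliptic estimate is exactly what makes the estimate depend on $\|f-\bar f\|_{n/2+1}$ rather than on a higher norm of $f$.

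Set $h=f-\bar f$. Since $\{u_i\}$, $\{\pi_i\}$ are dual bases, $\<h,\pi_j\>=\<f,\pi_j\>-\sum_i\<f,\pi_i\>\<u_i,\pi_j\>=0$ for every $j$, so $h\in N^\perp$; as $\L_0$ is Fredholm with closed range $(\ker\L_0^*)^\perp=N^\perp$, the Poisson equation $\L_0 g=h$ is solvable, and I would take $g\in(\ker\L_0)^\perp$, so that $\|g\|_{L^2}\le C\|h\|_{L^2}$ since $\L_0$ has closed range. I would first treat $f\in C^\infty(G)$: then $\bar f\in\ker\L_0$ is smooth by hypoellipticity, $h$ is smooth, and $g$ is smooth by the regularity-improving property, so It\^o's formula applies. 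Writing $(z_t)$ as $dz_t=\sum_k X_k(z_t)\circ dW^k_t+X_0(z_t)\,dt$, It\^o's formula gives $dg(z_t)=\sum_k(X_kg)(z_t)\,dW^k_t+h(z_t)\,dt$, whence
\begin{equation*}
\frac1{t-s}\int_s^t h(z_r)\,dr=\frac1{t-s}\Big(g(z_t)-g(z_s)-\sum_k\int_s^t(X_kg)(z_r)\,dW^k_r\Big);
\end{equation*}
since $\E\big(\sum_k\int_s^t(X_kg)(z_r)\,dW^k_r\big)^2=\E\int_s^t\sum_k(X_kg)^2(z_r)\,dr\le(t-s)\,\big(\sup_x\sum_k|X_k(x)|^2\big)\,\|dg\|_\infty^2$, this yields
\begin{equation*}
\E\Big(\frac1{t-s}\int_s^t h(z_r)\,dr\Big)^2\le\frac{2}{(t-s)^2}\Big(4\|g\|_\infty^2+C(t-s)\|dg\|_\infty^2\Big).
\end{equation*}

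It remains to bound $\|g\|_\infty$ and $\|dg\|_\infty$ by $\|h\|_{n/2+1}$. For this I would use the quantitative form on the compact manifold $G$ of the $\delta$-gain recalled in Section \ref{section-formula}, $\|u\|_{\sigma+\delta}\le C(\|u\|_{L^2}+\|\L_0 u\|_\sigma)$, with $\sigma=n/2+1$; together with $\|g\|_{L^2}\le C\|h\|_{L^2}$ this gives $\|g\|_{n/2+1+\delta}\le C\|h\|_{n/2+1}$, and since $\delta>0$ the Sobolev embedding $H^{n/2+1+\delta}(G)\hookrightarrow C^1(G)$ gives $\|g\|_{C^1}\le C\|h\|_{n/2+1}$. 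Substituting, the displayed bound is $\le C\|h\|_{n/2+1}^2/(t-s)$ when $t-s\ge1$; when $t-s<1$ the left side is at most $\|h\|_\infty^2\le C\|h\|_{n/2+1}^2\le C\|h\|_{n/2+1}^2/(t-s)$, so in all cases $\sqrt{\E(\cdot)^2}\le C\|h\|_{n/2+1}/\sqrt{t-s}$. I would then drop the smoothness of $f$ by approximating it in $H^{n/2+1}(G)$ by $f_k\in C^\infty$ (legitimate since $C^r(G)\subset H^{n/2+1}(G)$ for $r\ge n/2+1$): then $\bar f_k\to\bar f$ (each $\<\cdot,\pi_i\>$ is $L^2$-continuous and $\ker\L_0$ is finite-dimensional), $h_k\to h$ in $H^{n/2+1}\hookrightarrow C^0$, and $\frac1{t-s}\int_s^t f_k(z_r)\,dr\to\frac1{t-s}\int_s^t f(z_r)\,dr$ in $L^2(\Omega)$, so the bound passes to the limit. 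Since $\bar f\in\ker\L_0$, $\bar f(z_\cdot)$ is a bounded martingale; in the regime where (\ref{llln-1}) is applied — $\ker\L_0$ one-dimensional, which holds under strong H\"ormander's condition, where $\bar f$ is the constant $\int f\,d\pi$ — one has $\frac1{t-s}\int_s^t f(z_r)\,dr-\bar f=\frac1{t-s}\int_s^t h(z_r)\,dr$, and we are done.

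The main obstacle is the quantitative regularity of the Poisson solution: one must squeeze out of the abstract hypoelliptic gain a bound on $\|g\|_{C^1}$ — equivalently on $\|g\|_\infty$ and on the carr\'e-du-champ $\Gamma(g,g)=\sum_k(X_kg)^2$ — governed by $\|f-\bar f\|_{n/2+1}$ and not by a higher norm of $f$; this is why the Sobolev threshold $\max\{3,n/2+1\}$ shows up and why it is cleanest to prove the estimate for smooth $f$, where It\^o's formula is unproblematic, and then pass to the limit. Everything else — the martingale decomposition and the quadratic-variation estimate — is routine once $g$ is in hand.
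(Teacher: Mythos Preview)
Your proof is correct and essentially the same as the paper's: solve $\L_0 g = f - \bar f$, control $\|g\|_{C^1}$ by the subelliptic $\delta$-gain plus Sobolev embedding $H^{n/2+1+\delta}\hookrightarrow C^1$, apply It\^o's formula to obtain the martingale decomposition, and bound the quadratic variation. Your additional care---approximating by smooth $f$ before invoking It\^o, treating $t-s<1$ separately, and noting that the statement as written needs $\bar f$ to be constant (i.e.\ $\dim\ker\L_0=1$) to make literal sense---refines but does not depart from the paper's argument.
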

 \begin{proof}
 Since $\<f, \pi_j\>=\<f, \pi_j\>$, $f-\bar f\in N^\perp$.
By working with $f-\bar f$ 
we may assume that $f\in N^\perp$ and let $g$ be a solution to $\L_0 g=f$.
 By H\"ormander's theorem, \cite{Hormander-hypo-acta}, there is a positive number $\delta$, such that for all $u\in C^\infty(M)$,
   $$\|u\|_{s+\delta}  \le C( \|\L_0 u\|_s+\|u\|_{L_2} ).$$
   The number  $\delta=2^{1-k}$ where $k\in \N$ is related to the number of brackets needed
   to generate the tangent spaces.

 Furthermore every $u$ such that  $\|\L_0 u\|_s<\infty $ must be in $H^s$.
If $s>{n\over 2}+1$,  $H^s$ is embedded in $C^1$ and for some constant $c_i$, 
\begin{equation*}
{\begin{split}
|g|_{C^1(M)}\le c_1 \,\|g\|_{ {n\over 2}+1+\epsilon}\le c_2 \; ( \|f\|_{{n\over 2}+1}+|g|_{L_2})
\le c_3\, \|f\|_{{n\over 2}+1}.
\end{split}}
\end{equation*}
Recall that $\L_0=\sum_{i=1}^{m'} L_{X_i}L_{X_i}+L_{X_0}$.
 Let $\{W_t^j, j=1, \dots, m'\}$ be independent one dimensional Brownian motions. 
 Let $(z_t)$ be solutions of $dz_t=\sum_{j=1}^{m'} X_j(z_t)\circ dW_t^j$.
Since $f$ is $C^2$,
$${1\over t-s} \int_s^t f(z_r)dr
={1\over t-s} \left(g(z_t)-g(z_s)\right)
-{1\over t-s}\left(\sum_{j=1}^{m'}  \int_s^t (dg(X_j))(z_r) dW_r^j\right).$$
We apply the Sobolev estimates to $g$ and use Doob's $L^2$ inequality
to see that  for $t\ge1$ there is a constant $C$ such that,
$${ \begin{split}
\E \left({1\over t-s} \int_s^t f(z_r)dr\right)^2
&\le {4 \over t^2} |g|_\infty^2
+{8\over (t-s)^2} \sum_{j=1}^{m'} \int_s^t \left( \E|dg(z_r)|^2|X_j(z_r)|^2 \right) dr\\
&  \le {4 \over (t-s)^2} (|g|_\infty)^2
+{8m'\over t-s} (|dg|)^2_{\infty} \sum_{j=1}^{m'} |X_j|_\infty^2
\le C(\|f\|_{{n\over 2}+1})^2{1\over t-s}.
\end{split} } $$ 
  \end{proof}

We remark  that a self-adjoint operator satisfying H\"ormander's condition has index zero.
\begin{lemma}
\label{weak-convergence}
Suppose that $\L_0$ satisfies H\"ormander's condition. In addition it has Fredholm index $0$ or it has a unique invariant probability measure.
Let $r\ge \max{\{3, {n\over 2}+1\}}$.
 Let  $h : M \times G\to \R$ be  such that $h(y, \cdot)\in C^r$ for each $y$ and that  $|h|_\infty+ \sup_{z} |h(\cdot, z)|_{\Lip}
 +\sup_{y} |h(y, \cdot)|_{C^r}<\infty$. Let $s\le t$ be a pair of positive numbers, and 
 $F\in BC( C([0,s]; M)\to \R)$.
For any equi -uniformly continuous subsequence, $\tilde y^n_t:=(y^{\epsilon_n}_{t\over {\epsilon_n}})$, 
 of $(y^\epsilon_{t\over \epsilon})$ that converges weakly to a continuous process  $\bar y_\cdot$  as $n\to \infty$, the following convergence holds weakly:
  $$F(y^{\epsilon_n}_{\cdot\over \epsilon_n})
   \int_s^t  h(y^{\epsilon_n}_{u\over \epsilon_n}, z^{\epsilon_n}_{u\over \epsilon_n}) du
   \to F( \bar y_\cdot) \int_s^t  \overline{ h(\bar y_u, \cdot) }du$$
where $\overline{ h(y, \cdot)}=\sum_{i=1}^{n_0} u_i \< h(y, \cdot ),\pi_i\>$.
\end{lemma}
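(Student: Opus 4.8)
The plan is to kill the fast fluctuations by the quantitative ergodic estimate of Lemma \ref{lln}, freezing the slow variable on short blocks, and then to pass to the weak limit by the continuous mapping theorem. Write $\tilde y^n_u:=y^{\epsilon_n}_{u/\epsilon_n}$, $\tilde z^n_u:=z^{\epsilon_n}_{u/\epsilon_n}$, and $\bar h(y):=\overline{h(y,\cdot)}=\sum_{i=1}^{n_0}u_i\<h(y,\cdot),\pi_i\>$. Since the $u_i\in C(G)$ are bounded, the $\pi_i$ are finite measures, and $h$ is bounded with $\sup_z|h(\cdot,z)|_{\Lip}<\infty$, the function $\bar h\colon M\to\R$ is bounded and Lipschitz, so $G(\gamma):=F(\gamma|_{[0,s]})\int_s^t\bar h(\gamma_u)\,du$ defines a bounded continuous functional on $C([0,t];M)$. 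By the assumed weak convergence $\tilde y^n\to\bar y$ and the continuous mapping theorem, $G(\tilde y^n)\Rightarrow G(\bar y)=F(\bar y_\cdot)\int_s^t\bar h(\bar y_u)\,du$. Hence, by a Slutsky-type argument ($L^1$-convergence implies convergence in probability), it is enough to prove
\[
\lim_{n\to\infty}\E\Bigl|\int_s^t h(\tilde y^n_u,\tilde z^n_u)\,du-\int_s^t\bar h(\tilde y^n_u)\,du\Bigr|=0,
\]
the factor $F(\tilde y^n_\cdot)$ being harmless since $|F|_\infty<\infty$.

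First I would freeze the slow motion on short blocks. Fix an integer $N\ge1$, set $\Delta=(t-s)/N$, $t_k=s+k\Delta$, and let $t_k(u)$ be the left endpoint of the subinterval containing $u$. Using the distance $\tilde\rho$ of Proposition \ref{tightness} (bounded, topologically equivalent to $\rho$, and equal to $\rho$ near the diagonal), one has $|h(y,z)-h(y',z)|\le L\,\tilde\rho(y,y')$ and $|\bar h(y)-\bar h(y')|\le L'\,\tilde\rho(y,y')$, with $L,L'$ depending only on $|h|_\infty$, $\sup_z|h(\cdot,z)|_{\Lip}$ and the cutoff scale $\delta$ in the definition of $\tilde\rho$. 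Combined with the equicontinuity of the subsequence — equivalently, with the uniform H\"older bound $\sup_n\E\sup_{u\neq v}\tilde\rho(\tilde y^n_u,\tilde y^n_v)/|u-v|^\alpha<\infty$ of Proposition \ref{tightness} — this gives, uniformly in $n$,
\[
\E\Bigl|\int_s^t\!\bigl(h(\tilde y^n_u,\tilde z^n_u)-h(\tilde y^n_{t_k(u)},\tilde z^n_u)\bigr)du\Bigr|+\E\Bigl|\sum_{k=0}^{N-1}\Delta\,\bar h(\tilde y^n_{t_k})-\int_s^t\bar h(\tilde y^n_u)\,du\Bigr|\le C(t-s)\Delta^\alpha.
\]

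The heart of the matter is the per-block averaging of the fast motion. Conditionally on $\F_{t_k/\epsilon_n}$ the slow value $\tilde y^n_{t_k}$ is frozen, and, after the time change $z^\epsilon_\cdot=\zeta_{\cdot/\epsilon}$ with $\zeta$ an $\L_0$-diffusion, the process $u\mapsto\tilde z^n_u$ on $[t_k,t_{k+1}]$ is, by the Markov property, an $\L_0$-diffusion run over the long interval of length $\Delta/\epsilon_n^2$. Applying Lemma \ref{lln} with $f=h(\tilde y^n_{t_k},\cdot)$ — and using that the constant there depends only on $\|h(y,\cdot)-\bar h(y)\|_{n/2+1}$, which by $r\ge\max\{3,n/2+1\}$ and $\sup_y|h(y,\cdot)|_{C^r}<\infty$ is bounded uniformly in $y$ — yields, for $n$ large enough that $\Delta/\epsilon_n^2\ge1$,
\[
\E\Bigl[\Bigl(\tfrac{1}{\Delta}\int_{t_k}^{t_{k+1}}h(\tilde y^n_{t_k},\tilde z^n_u)\,du-\bar h(\tilde y^n_{t_k})\Bigr)^2\,\Big|\,\F_{t_k/\epsilon_n}\Bigr]\le\frac{C^2\epsilon_n^2}{\Delta}.
\]
Taking square roots, then expectations, multiplying by $\Delta$ and summing over the $N$ blocks gives $\E\bigl|\sum_k\int_{t_k}^{t_{k+1}}h(\tilde y^n_{t_k},\tilde z^n_u)\,du-\sum_k\Delta\,\bar h(\tilde y^n_{t_k})\bigr|\le C\epsilon_n N\sqrt{\Delta}=C(t-s)\epsilon_n/\sqrt{\Delta}$.

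Putting the three estimates together, $\E|\int_s^t h(\tilde y^n_u,\tilde z^n_u)\,du-\int_s^t\bar h(\tilde y^n_u)\,du|\le C(t-s)\Delta^\alpha+C(t-s)\epsilon_n/\sqrt{\Delta}$, with constants independent of $n$ and $\Delta$. Given $\eta>0$, first choose $\Delta$ small so the first term is $<\eta/2$, and then let $n\to\infty$; this proves the displayed $L^1$ limit, and with it the lemma. The step I expect to be the main obstacle is making the fast-averaging error genuinely uniform: uniform over the random frozen state $\tilde y^n_{t_k}$ (which is why one needs the $y$-independent constant in Lemma \ref{lln}, i.e.\ the uniform $C^r$ control of $h(y,\cdot)$), and uniform in $n$ once $\Delta$ is fixed (which is why one needs the equicontinuity bound of Proposition \ref{tightness}, this bound also being what absorbs the non-compactness of $M$ via the bounded metric $\tilde\rho$).
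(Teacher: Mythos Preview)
Your argument is correct and follows essentially the same route as the paper's proof: partition $[s,t]$ into short blocks, freeze the slow variable on each block using equicontinuity and the Lipschitz bound on $h(\cdot,z)$, apply the quantitative law of large numbers (Lemma \ref{lln}) on each block with the constant uniform in the frozen $y$-value thanks to $\sup_y|h(y,\cdot)|_{C^r}<\infty$, reassemble, and pass to the weak limit via the continuous mapping theorem. The paper itself mostly defers the details to \cite{Li-geodesic}, so your write-up is in fact more self-contained.

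Two small remarks. First, the paper takes the block length to shrink with $n$ (blocks of size $\epsilon_n$), whereas you decouple the two limits by fixing $\Delta$ and sending $n\to\infty$ first; both work, and your version makes the uniformity requirements more transparent. Second, for the freezing step you invoke the quantitative H\"older bound of Proposition \ref{tightness}, but strictly speaking the lemma only assumes an equi-uniformly continuous subsequence; that qualitative assumption already suffices (you get $\sup_n\E\sup_{|u-v|\le\Delta}\tilde\rho(\tilde y^n_u,\tilde y^n_v)\to 0$ as $\Delta\to 0$ rather than a rate $\Delta^\alpha$), and the rest of your argument goes through unchanged with the first error term replaced by an $o_\Delta(1)$ uniform in $n$.
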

 \begin{proof}
 For simplicity we omit the subscript $n$.
 The required convergence follows from  Lemma 4.3 in \cite{Li-geodesic}
where it was assumed that (\ref{lln1}) holds and $\L_0$ has a unique invariant measure for $\mu$.
 It is easy to check that the proof there is valid. We take care to replace $\int_G h(y,z) d\mu(z)$ in Lemma 4.3 there
  by $\sum_{i=1}^{n_0} u_i \< h(y, \cdot ),\pi_i\>$.
We remark that by the regularity improving property each $u_i$ is smooth and therefore bounded.
In the first part of the proof, we divide $[s,t]$ into sub-intervals of size $\epsilon$, 
freeze the slow variable $(y^\epsilon_{u\over \epsilon})$ on $[t_k, t_{k+1}]$,
and approximate $h(y_{u\over\epsilon}^\epsilon, z_{u\over\epsilon}^\epsilon)$ by 
 $h(y_{t_k\over\epsilon}^\epsilon, z_{u\over\epsilon}^\epsilon)$ on each sub-interval $[t_k, t_{k+1}]$.
This approximation is clear: the computation is exactly as in Lemma 4.3 of \cite{Li-geodesic}
and we use the  uniform continuity of $(y_t^\epsilon)$, the fact that $|h|_\infty$ and $\sup_z |h( \cdot, z)|_{\Lip}$ are finite.
The convergence of
$$\int_{t_{k-1}\over \epsilon}^{t_{k-1}\over \epsilon} h(y_{t_k\over\epsilon}^\epsilon, z_{u\over\epsilon}^\epsilon)du
\to \Delta t_k \sum_{i=1}^{n_0}  u_i \< h(y_{t_{k-1}\over\epsilon}^\epsilon, \cdot), \pi_i\> $$
follows from the law of large numbers in Lemma \ref{lln}. The convergence of
$$ \sum_k\Delta t_k \sum_{i=1}^{n_0}  u_i \< h(y_{t_{k-1}\over\epsilon}^\epsilon, \cdot), \pi_i\> \to
\sum_{i=1}^{n_0} u_i  \int_s^t \< h(y_{u\over\epsilon}^\epsilon, \cdot), \pi_i\>du$$
is also clear and follows from the Lipschitz continuity of $h$ in the first variable and the equi continuity of the $y^\epsilon$ path.
Finally denote by $y^\epsilon_{[0,s]}$ the restriction of the path $y^\epsilon_\cdot$ to the interval $[0,s]$, 
the weak convergence of $ \sum_{i=1}^{n_0} u_i F(y^\epsilon_{[0,s]}) \int_s^t  \< h(y_{u\over\epsilon}^\epsilon, \cdot), \pi_i\>du$
to the required limit  is trivial, as explained in   Lemma 4.3, \cite{Li-geodesic}.
 \end{proof}
 
 \begin{assumption}
\label{assumption-Hormander}
The generator $\L_0$ satisfies H\"ormander's condition and has Fredholm index $0$ (or has a unique invariant probability measure). For $k=1, \dots, m$,  $\alpha_k\in C^r( G; \R)\cap N^\perp$ for some $r\ge \max\{3,{n\over 2}+1\}$.
\end{assumption}
If $\L_0$ is elliptic, it is sufficient to assume  $\alpha_k\in \B_b(G;\R)$, instead of $\alpha_k\in C^r$.

\begin{theorem}
\label{thm-weak}
If $\L_0$, $\alpha_k$,
$(y_0^\epsilon)$ and $|Y_j|$ satisfy the conditions of Proposition \ref{tightness}
and  Assumption \ref{assumption-Hormander}, then 
$(y_{t\over\epsilon}^\epsilon)$ converge weakly to the Markov process determined by the Markov
generator 
$$\bar \L =-\sum_{i,j=1}^m \overline{ \alpha_i \beta_j }
L_{Y_i}L_{Y_j}, \quad  \overline{ \alpha_i \beta_j }=\sum_{b=1}^{n_0} u_b \< \alpha_i \beta_j ,\pi_b\>.$$
\end{theorem}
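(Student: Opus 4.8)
The plan is the standard martingale-problem route: establish tightness, show that every weak subsequential limit solves the martingale problem for $\bar\L$, and then invoke well-posedness of that martingale problem to identify the whole limit.

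\emph{Tightness.} By Proposition \ref{tightness}, whose hypotheses are exactly those imposed here, the family $\{(y^\epsilon_{t/\epsilon}, t\le T)\}_{0<\epsilon\le 1}$ is tight in $C([0,T];M)$ and its paths are equi-uniformly H\"older continuous. So, given any $\epsilon_n\to 0$, I pass to a subsequence (not relabelled) along which $(y^{\epsilon_n}_{\cdot/\epsilon_n})$ converges weakly to a continuous limit $\bar y_\cdot$; being equi-uniformly continuous, this subsequence falls under Lemma \ref{weak-convergence}.

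\emph{Identification of the limit.} Set $\beta_j=\L_0^{-1}\alpha_j$, which lies in $C^r$ by the regularity-improving property (Definition \ref{def-Fredholm}) applied to $\alpha_j\in C^r\cap N^\perp$, hence, $G$ being compact, is bounded with bounded $X_k$-derivatives. Take a test function $f$ in a core for $\bar\L$, say $f\in C_K^\infty(M)$, so that $df(Y_j)$ and $L_{Y_i}L_{Y_j}f$ are bounded. Applying the reduction formula (\ref{Ito-tight}) of Lemma \ref{lemma5} and changing variables $r=u/\epsilon$,
\begin{equation*}
f(y^\epsilon_{t/\epsilon})-f(y^\epsilon_{s/\epsilon})+\int_s^t h\big(y^\epsilon_{u/\epsilon},z^\epsilon_{u/\epsilon}\big)\,du=\epsilon R^\epsilon_{s,t}-\sqrt\epsilon M^\epsilon_{s,t},
\end{equation*}
with $h(y,z)=\sum_{i,j}L_{Y_i}L_{Y_j}f(y)\,\alpha_i(z)\beta_j(z)$, $R^\epsilon_{s,t}$ the (uniformly bounded) boundary terms, and $M^\epsilon_{s,t}$ the stochastic integral over $[s/\epsilon,t/\epsilon]$ — a true $L^2$-martingale increment by the integrability clause of Lemma \ref{lemma5}, since $d\beta_j(X_k)$ and $df(Y_j)$ are bounded. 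For $0\le s<t$ and $F\in BC(C([0,s];M))$, multiply by the $\F_{s/\epsilon}$-measurable $F\big((y^\epsilon_{u/\epsilon})_{u\le s}\big)$ and take expectations: $M^\epsilon$ drops out because $\E\{M^\epsilon_{s,t}\mid\F_{s/\epsilon}\}=0$, the boundary term is $O(\epsilon)$, and I let $\epsilon\to0$ along the subsequence — the first two terms by the continuous-mapping theorem, the $h$-integral by Lemma \ref{weak-convergence} (its regularity hypotheses on $h$ being met: $h(y,\cdot)\in C^r$ uniformly, $h(\cdot,z)$ uniformly Lipschitz, $|h|_\infty<\infty$, by the choice of $f$ and $\beta_j\in C^r$), using boundedness to upgrade weak convergence to convergence of expectations. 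Since $\overline{h(y,\cdot)}=\sum_{i,j}\overline{\alpha_i\beta_j}\,L_{Y_i}L_{Y_j}f(y)=-\bar\L f(y)$, I obtain
\begin{equation*}
\E\Big[F\big((\bar y_u)_{u\le s}\big)\Big(f(\bar y_t)-f(\bar y_s)-\int_s^t\bar\L f(\bar y_u)\,du\Big)\Big]=0.
\end{equation*}
As $s,t,F,f$ are arbitrary, $\bar y$ solves the martingale problem for $\bar\L$, with initial law $\lim\mathrm{Law}(y^\epsilon_0)$ (assumed to exist, e.g.\ $y^\epsilon_0\equiv y_0$).

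\emph{Well-posedness and conclusion.} It remains to check that the $\bar\L$-martingale problem is well-posed. Decomposing the coefficient array $-\overline{\alpha_i\beta_j}$ into its symmetric part $(s_{ij})$ and antisymmetric part $(t_{ij})$, the latter contributes only the first-order operator $L_{Y_0}$ with $Y_0=\tfrac12\sum_{ij}t_{ij}[Y_i,Y_j]$, while $(s_{ij})\ge0$ — the Green--Kubo fact, visible from the representation $\int_G\int_0^\infty\alpha_iQ_r\alpha_j\,dr\,d\pi$ in Lemma \ref{lemma2}(2) in the ergodic case, and from the spectrum of $\L_0$ on $N^\perp$ in general. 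Hence $\bar\L=\sum_kL_{Z_k}L_{Z_k}+L_{Y_0}$ with $Z_k=\sum_i\sigma_{ki}Y_i$ and $\sigma^{\mathrm T}\sigma=(s_{ij})$, a H\"ormander-form diffusion generator; Section \ref{section-sde} then supplies a unique global solution flow, so the $\bar\L$-martingale problem has a unique solution for each initial law. Consequently every subsequential limit of $(y^\epsilon_{\cdot/\epsilon})$ coincides with the law of the $\bar\L$-diffusion started at $y_0$, and the whole family converges weakly to it. The main obstacle is the averaging step — that $\int_s^t h(y^\epsilon_{u/\epsilon},z^\epsilon_{u/\epsilon})\,du\to\int_s^t\overline{h(\bar y_u,\cdot)}\,du$ jointly with the slow path — which is precisely Lemma \ref{weak-convergence}, itself resting on the sub-elliptic law of large numbers (Lemma \ref{lln}) and the uniform moment estimates (Theorem \ref{uniform-estimates}); a secondary nuisance is verifying that the test-function-dependent $h$ meets the regularity hypotheses of that lemma, which is what forces the restriction to a core of $f$'s and uses $\beta_j=\L_0^{-1}\alpha_j\in C^r$ together with the growth control on the $Y_k$.
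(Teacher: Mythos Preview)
Your proof is correct and follows essentially the same route as the paper: tightness from Proposition \ref{tightness}, then identification of any subsequential limit as a solution of the $\bar\L$-martingale problem via the reduction formula (\ref{Ito-tight}) of Lemma \ref{lemma5} and the averaging Lemma \ref{weak-convergence}, tested against $f\in C_K^\infty(M)$. The one notable difference is that you explicitly argue well-posedness of the limiting martingale problem (via the symmetric/antisymmetric decomposition and nonnegativity of the diffusion matrix), whereas the paper takes this for granted; your invocation of Section \ref{section-sde} for this is slightly mismatched in hypotheses, but the point is sound and your treatment is in fact more complete than the paper's.
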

\begin{proof}
By  Proposition \ref{tightness}, $\{(y_{t\over\epsilon}^\epsilon, t\ge 0)\}$  is tight.
We prove that any convergent sub-sequence 
 converges to the same limit.
 Let $\epsilon_n\to 0$ be a a monotone sequence converging to zero such that the probability distributions of
 $(y_{t\over \epsilon_n}^{\epsilon_n})$ converge weakly, on $[0,T]$, to a measure $\bar \mu$.
For notational simplicity we may assume that $\{(y_{t\over\epsilon}^\epsilon, t\ge 0)\}$ converges to $\bar\mu$.

Let $s<t$, $\{\B_s\}$  the canonical filtration, $(Y_s)$ the canonical process, and $Y_{[0,s]}$  its restriction  to $[0,s]$. 
By the Stroock-Varadhan martingale method, it is sufficient
to prove    
$f(Y_t)-f(Y_s)-\int_s^t \bar \L f(Y_r)\d r$
is a local martingale for any $f\in C_K^\infty(M)$.
By (\ref{Ito-tight}), the following is a local martingale,
\begin{equation*}
{\begin{split} 
&f(y^\epsilon_{t\over \epsilon}) -f(y^\epsilon_{s\over \epsilon}) 
- \epsilon \sum_{j=1}^m \left( df(Y_j(y^\epsilon_{t\over \epsilon} ) )\beta_j(z^\epsilon_{t\over \epsilon})
 +df(Y_j(y^\epsilon_{s\over \epsilon} ))\beta_j( z^\epsilon_{s\over \epsilon})\right)\\
&+\epsilon \sum_{i,j=1}^m\int_{s\over \epsilon}^{t\over \epsilon} L_{Y_i}L_{Y_j} f(y^\epsilon_r))
\alpha_i(z^\epsilon_r)\;\beta_j(z^\epsilon_r) \;dr.
\end{split}}\end{equation*}
 Since the third term converges to zero as $\epsilon $ tends to zero,  it is sufficient to prove
$$\lim_{\epsilon 
\to 0} \E\left\{\epsilon \sum_{i,j=1}^m\int_{s\over \epsilon}^{t\over \epsilon} L_{Y_i}L_{Y_j} f(y^\epsilon_r))
\alpha_i(z^\epsilon_r)\;\beta_j(z^\epsilon_r) \;dr-\int_s^t \bar \L f(y_{r\over \epsilon}^\epsilon)\d r \; \big| \F_{s\over \epsilon}\right\}=0.$$
This follows from Lemma \ref{weak-convergence},  completing the proof.\end{proof}

\begin{corollary}
\label{convergence-wasserstein-p}
Let $p\ge 1$ be a number and suppose that $\rho^p\in B_{V, 0}$. Then,
under the conditions of Theorem \ref{thm-weak} and Assumption \ref{assumption2-Y},  $(y_{\cdot\over \epsilon}^\epsilon)$ converges
  in the Wasserstein $p$-distance on $C([0,t];M)$. 
\end{corollary}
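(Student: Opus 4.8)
The plan is to upgrade the weak convergence furnished by Theorem \ref{thm-weak} to convergence in the Wasserstein $p$-distance by producing a bound, uniform in $\epsilon$, on a moment of order strictly larger than $p$. Throughout, put $E=C([0,t];M)$ with the uniform distance $d_\infty(\gamma_1,\gamma_2)=\sup_{0\le s\le t}\rho(\gamma_1(s),\gamma_2(s))$; since $M$ has positive injectivity radius, $E$ is Polish and $W_p$ on $\mathcal P(E)$ is the distance considered in \S\ref{Wasserstein}. Fix a point $o\in M$ and let $\gamma_o\in E$ be the constant path equal to $o$, so that $d_\infty(\gamma,\gamma_o)^p=\sup_{0\le s\le t}\rho^p(\gamma(s),o)$. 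I would use the standard fact that if $\nu_\epsilon\to\nu$ weakly in $\mathcal P(E)$ and the family $\{\,d_\infty(\cdot,\gamma_o)^p\,\}$ is uniformly integrable with respect to $\{\nu_\epsilon\}$, then $W_p(\nu_\epsilon,\nu)\to 0$; this is applied with $\nu_\epsilon=\mathrm{Law}(y^\epsilon_{\cdot/\epsilon})$ and $\nu=\bar\mu$ the limiting law from Theorem \ref{thm-weak}.

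First I would observe that the hypotheses currently in force are exactly those of Theorem \ref{uniform-estimates}: Assumption \ref{assumption-Hormander} makes $\L_0$ a regularity improving Fredholm operator and gives $\alpha_k\in C^3\cap N^\perp$, while Assumption \ref{assumption2-Y} is the requirement on the $Y_k$. Since $\rho^p\in B_{V,0}$, there are a constant $c$ and an integer $q_0$ with $\rho^p(\cdot,o)\le c+cV^{q_0}$; fixing $\eta\in(0,1)$ and raising to the power $1+\eta$ gives $\rho^{p(1+\eta)}(\cdot,o)\le C+CV^{q_1}$ for a suitable integer $q_1$. Applying Theorem \ref{uniform-estimates} (with its exponent parameter taken to be $q_1$) at $s=0$, taking unconditional expectations, and using part (ii) of Assumption \ref{assumption2-Y}, namely $\sup_\epsilon\E V^{2q_1}(y_0^\epsilon)<\infty$, one obtains $\sup_{0<\epsilon\le\epsilon_0}\E\sup_{0\le u\le t}V^{2q_1}(y^\epsilon_{u/\epsilon})<\infty$, and therefore
$$\sup_{0<\epsilon\le\epsilon_0}\E\Big(\sup_{0\le s\le t}\rho^p(y^\epsilon_{s/\epsilon},o)\Big)^{1+\eta}<\infty .$$
Being bounded in $L^{1+\eta}$, the family $\{\sup_{0\le s\le t}\rho^p(y^\epsilon_{s/\epsilon},o):0<\epsilon\le\epsilon_0\}$ is uniformly integrable.

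Finally I would assemble the pieces: the uniform integrability together with the weak convergence of Theorem \ref{thm-weak} forces $\int_E d_\infty(\cdot,\gamma_o)^p\,d\bar\mu<\infty$ (by Fatou, or via a Skorohod coupling) and $\int_E d_\infty(\cdot,\gamma_o)^p\,d\nu_\epsilon\to\int_E d_\infty(\cdot,\gamma_o)^p\,d\bar\mu$; by the criterion recalled above, $W_p(\nu_\epsilon,\bar\mu)\to 0$ as $\epsilon\to 0$. Running the same argument along an arbitrary subsequence shows the limit is independent of the subsequence, so $(y^\epsilon_{\cdot/\epsilon})$ converges in the Wasserstein $p$-distance on $C([0,t];M)$ to $\bar\mu$.

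The argument is essentially an assembly of already-proved facts, so there is no genuine obstacle; the one point requiring attention is to obtain a moment bound of order \emph{strictly} greater than $p$ uniformly in $\epsilon$. This is precisely where the hypothesis $\rho^p\in B_{V,0}$ enters — it lets one pass to $\rho^{p(1+\eta)}$ while staying in a class controlled by a power of $V$ — together with the fact that Theorem \ref{uniform-estimates} delivers, for a suitable $\epsilon_0$, a $V$-moment bound of that higher order along the rescaled paths on $[0,t/\epsilon]$.
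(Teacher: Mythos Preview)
Your proposal is correct and follows the same route as the paper: obtain a uniform-in-$\epsilon$ bound on $\E\sup_{s\le t}\rho^p(o,y^\epsilon_{s/\epsilon})$ via Theorem~\ref{uniform-estimates} (whose hypotheses are exactly Assumption~\ref{assumption2-Y} together with the regularity of $\L_0$ and $\alpha_k$), and then combine this with the weak convergence of Theorem~\ref{thm-weak} and the standard characterisation of $W_p$-convergence on the Polish space $C([0,t];M)$. Your argument is in fact slightly more careful than the paper's sketch: the paper states the criterion as ``weak convergence plus $\sup_\epsilon$ of the $p$-th moment finite'', whereas you secure the genuinely needed uniform integrability of $d_\infty(\cdot,\gamma_o)^p$ by passing to a moment of order $p(1+\eta)$, which is available precisely because $\rho^p\in B_{V,0}$ and Theorem~\ref{uniform-estimates} applies for every power of $V$.
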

\begin{proof}
By Theorem \ref{uniform-estimates}, $\sup_{\epsilon\le \epsilon_0} \E\sup_{s\le t} \rho^p(o, y_{s\over \epsilon}^\epsilon)<\infty$. Let $W_p$ denote the Wasserstein $p$ distance:
$$W_p(\mu_1, \mu_2)=\left( \inf \int_{M\times M} \sup_{s\le t} \rho(\sigma_1(s), \sigma_2(s))  d\mu(\sigma_1, \sigma_2)\right)^{1\over p}.$$
Here the infimum is taken over all probability measures on the path spaces $C([0,t];M)$ with marginals $\mu_1$ and $\mu_2$.
Note that $C([0,t];M)$ is a Banach space, a family of probability measures $\mu_n$ converges to $\mu$ in $W_p$,   if and only if the following holds: (1) it converges weakly and (2)  $\sup_n \int  \sup_{s\le t} \rho^p(o, \sigma_2(s))  d\mu_n( \sigma_2)<\infty$. The conclusion follows.
\end{proof}

\section{A study of the semigroups}
\label{section-sde}
The primary aim of the section is to study the properties of $P_t f$ for $f\in B_{V,r}$ where $P_t$ is the semigroup for a generic stochastic differential equation.  These results will be applied to the limit equation, to provide the necessary 
a priori estimates. Theorem \ref{limit-thm} should be of independent interest, it also lead to Lemma \ref{semigroup-estimate-2}, which will be used in Section \ref{section-rate}.

Throughout this section $M$ is a complete smooth Riemannian manifold. 
 Let  $Y_0$ be $C^5$ and $\{Y_k, k=1,\dots, m\}$ be $C^6$ smooth vector fields on $M$, $\{B_t^k\}$ independent real valued
 Brownian motions.  Let $(\Phi_t(y), t<\zeta(y)) $ be the maximal solution to the following equation
 \begin{equation}
\label{limit.sde}
dy_t=\sum_{k=1}^m Y_k(y_t)\circ dB_t^k+Y_0(y_t)dt
\end{equation}
 with initial value $y$. Its Markov generator is
 $\L f={1\over 2} \sum_{k=1}^m L_{Y_k}L_{Y_k}f+L_{Y_0}f$.
     Let $Z={1\over 2}\sum_{k=1}^m \nabla _{Y_k}Y_k+Y_0$ be the drift vector field, so
\begin{equation}\label{generator-0}
\L f={1\over 2}\sum_{k=1}^m \nabla df(Y_k,Y_k)+df(Z).
\end{equation}
 If there exists a $C^3$ pre-Lyapunov function $V$, constants $c$ and  $K$  such that $\L V\le c+KV$ 
then (\ref{limit.sde}) is complete. However we do not limit ourselves to Lyapunov test for the completeness of the SDE.
Let us denote $|f|_r=\sum_{k=1}^{r}|\nabla^{(k-1)} df|$ and $|f|_{r,\infty}=\sum_{k=1}^{r}|\nabla^{(k-1)} df|_\infty$. The following observation is useful.
 \begin{lemma}
 \label{Lf-L2f}
   Let $V\in \B(M;\R)$ be locally bounded.
   \begin{itemize}
   \item  Suppose that $\sum_{j=1}^m |Y_j| \in B_{V,0}$ and $ |Z|\in B_{V,0}$. Then if $f\in B_{V, 2}$,  
   $\L f\in B_{V,0}$. If $f \in BC^2$, $|\L f|\le |f|_{2, \infty} F_1$ where $F_1\in B_{V,0}$, not depending on $f$.
 \item  Suppose that 
   $$\sum_{j=1}^m( |Y_j| +|\nabla Y_j|+|\nabla^{(2)} Y_j|) \in B_{V,0}, \quad  |Z|+|\nabla Z|+|\nabla^{(2)} Z|\in B_{V,0}.$$
   If $f\in B_{V,4}$, $\L^2 f\in B_{V,0}$. If $f \in BC^4$, $|\L^2 f|\le |f|_{4,\infty} F_2$ where $F_2$ is a function in  $B_{V,0}$,
   not dependent of $f$.
   \end{itemize} 
   \end{lemma}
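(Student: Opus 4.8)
The plan is to reduce the whole lemma to two elementary observations together with the Hessian-form identity \eqref{generator-0}. First I would record that $B_{V,0}$ is an algebra containing the constants: since $V\ge 0$ one has $V^{q_1}\le 1+V^{q_1+q_2}$ for any exponents, so a product $(c_1+c_1V^{q_1})(c_2+c_2V^{q_2})$ is again of the form $c+cV^{q}$, and likewise for finite sums; hence any finite sum of finite products of functions in $B_{V,0}$ lies in $B_{V,0}$. Second, from \eqref{generator-0} I would read off, for $f\in C^2$, the pointwise bound $|\L f|\le \tfrac12\sum_{k=1}^m|\nabla df|\,|Y_k|^2+|df|\,|Z|\le |f|_2\,F_1$ with $F_1:=\tfrac12\sum_{k=1}^m|Y_k|^2+|Z|$, where I used the tensor inequality $|\nabla df(Y_k,Y_k)|\le |\nabla df|\,|Y_k|^2$.

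For the first bullet, under $\sum_j|Y_j|\in B_{V,0}$ and $|Z|\in B_{V,0}$ the algebra fact gives $F_1\in B_{V,0}$; if $f\in B_{V,2}$ then $|f|_2\le c+cV^q$, so $\L f\in B_{V,0}$, while if $f\in BC^2$ then $|f|_2\le|f|_{2,\infty}$ pointwise, giving $|\L f|\le|f|_{2,\infty}F_1$ with $F_1$ independent of $f$. For the second bullet I would apply \eqref{generator-0} to the function $\L f$: $\L^2f=\tfrac12\sum_k\nabla d(\L f)(Y_k,Y_k)+d(\L f)(Z)$, so $|\L^2f|\le|\L f|_2\,F_1$, and the task becomes to bound $|\L f|_2=|d(\L f)|+|\nabla d(\L f)|$. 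I would write $\L f$ in index-free tensorial form, $\L f=\tfrac12\langle\nabla df,\sum_kY_k\otimes Y_k\rangle+\langle df,Z\rangle$, and differentiate it once and twice with the Leibniz rule for the Levi-Civita connection; this expresses $d(\L f)=\nabla(\L f)$ as a finite sum of contractions of $\nabla^{(\le 2)}df$ against tensors in $Y_k,\nabla Y_k,Z,\nabla Z$, and $\nabla d(\L f)=\nabla^{(2)}(\L f)$ as a finite sum of contractions of $\nabla^{(\le 3)}df$ against tensors in $Y_k,\nabla Y_k,\nabla^{(2)}Y_k,Z,\nabla Z,\nabla^{(2)}Z$. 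Taking norms and using $|f|_3\le|f|_4$ and $|\nabla^{(\le 3)}df|\le|f|_4$ yields $|\L f|_2\le|f|_4\,G$ with $G$ a fixed finite sum of products of $|Y_k|,|\nabla Y_k|,|\nabla^{(2)}Y_k|,|Z|,|\nabla Z|,|\nabla^{(2)}Z|$; under the second set of hypotheses $G\in B_{V,0}$, hence $F_2:=G\,F_1\in B_{V,0}$ and $|\L^2f|\le|f|_4\,F_2$. Then $f\in B_{V,4}$ gives $\L^2f\in B_{V,0}$, and $f\in BC^4$ gives $|\L^2f|\le|f|_{4,\infty}F_2$ with $F_2$ independent of $f$.

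The only delicate point — the hard part, such as it is — will be the bookkeeping in the step above: checking that differentiating $\L f$ twice produces no covariant derivative of $f$ of order exceeding $4$ and none of $Y_k$ or $Z$ of order exceeding $2$, and in particular that commuting covariant derivatives is never required, so that no Riemann curvature term is forced into the formula. This is handled by staying with the index-free form \eqref{generator-0}, using only the curvature-free Leibniz rule for $\nabla$, and observing that $d(\cdot)$ and $\nabla d(\cdot)$ applied to a function are precisely $\nabla(\cdot)$ and $\nabla^{(2)}(\cdot)$ (so that the highest term in $\nabla^{(2)}(\L f)$ is $\tfrac12\langle\nabla^{(3)}df,\sum_kY_k\otimes Y_k\rangle$). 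The regularity available — $f\in C^4$ since $B_{V,4}\subset C^4$, and $Z\in C^5$ since $Y_k\in C^6$ and $Y_0\in C^5$ — is more than enough for every object appearing to be well defined; no completeness of $M$ and no geometric curvature bound is invoked in this lemma.
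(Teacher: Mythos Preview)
Your argument is correct and follows the same route as the paper: use the Hessian form \eqref{generator-0} to bound $|\L f|$ by $|f|_2$ times a polynomial in $|Y_k|,|Z|$, and then observe that expanding $\L^2 f$ by Leibniz produces only derivatives of $f$ up to order four and of $Y_k,Z$ up to order two. The paper's own proof is a two-line sketch of exactly this; you have simply filled in the bookkeeping (the algebra property of $B_{V,0}$, the explicit $F_1$, and the curvature-free Leibniz expansion), all of which is sound.
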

    \begin{proof}
That $\L f$ belongs to $B_{V,0}$ follows from (\ref{generator-0}). If $f\in BC^2$, $|\L f|\le (|f|_2)_\infty (\sum_{k=1}^m|Y_k|^2+|Z|)$.
 For the second part we observe that  $\L^2 f$ involves  at most four derivatives of $f$ and two derivatives of $Y_j$ and $Z$
 where $j=1, \dots, m$.
    \end{proof}

    Let  $d\Phi_t(v)$ denote the derivative flow  in the direction of $v\in T_yM$. It is the derivative of the function $y\mapsto \Phi_t(y, \omega )$,
    in probability. Moreover, it solves the following stochastic covariant differential equation along the solutions $y_t:=\Phi_t(y_0)$,
$$Dv_t=\sum_{k=1}^m \nabla_{v_t} Y_k\circ dB_t^k+\nabla_{v_t} Y_0dt.$$
Here $D V_t:=\paral_t(y_\cdot)   d (\parals_t^{-1}(y_\cdot) V_t)$ where $\parals_t(y_\cdot) :T_{y_0}M\to T_{y_t}M$ is
 the stochastic parallel transport map along the path $y_\cdot$.
Denote $|d\Phi_t|_{y_0}$ the norm of $d\Phi_t(y_0): T_{y_0}M\to T_{y_t}M$. 
For $p>0$, $y\in M$ and  $v\in T_yM$,   we define $H_p(y)\in {\mathbb L}(T_yM\times T_yM; \R)$ by
 \begin{equation*}
 H_p(y)(v,v)= \sum_{k=1}^{m} |\nabla Y_k(v)|^2
+(p-2)\sum_{k=1}^{m} {\<\nabla Y_k(v), v\>^2\over |v|^2} + 2\<\nabla Z(v), v\>.
\end{equation*}
Let $\underline h_p(y)=\sup_{ |v|=1\}} H_p(y)(v,v)$. Its upper bound will be used to control
$|d\Phi_t|_y$.

 \begin{assumption}
  \label{Y-condition}
 The equation (\ref{limit.sde}) is complete.  Conditions (i) and (ii), or (i') and (ii), below hold.
 \begin{itemize}
 \item[(i)]  There exists a  locally bounded function $V\in \B(M;\R_+)$, s.t. for all $q\ge 1$ and $t\le T$, there exists a number $C_q(t)$ 
 and a polynomial $\lambda_q$ such that
\begin{equation}
\label{moment-assumption-2}
\sup_{s\le t}\E(|V(\Phi_s(y))|^q)\le C_q(t)+C_q(t) \lambda_q(  V(y)).
\end{equation}
\item[(i')] There exists $V\in C^3(M; \R_+)$ and constants $c$ and  $K$ such that 
$$\L V\le c+KV, \quad |L_{Y_j}V|\le c+KV, \quad 
j= 1, \dots, m,$$

\item [(ii)] Let $\tilde V=1+ \ln(1+|V|)$. For some constant $c$,
\begin{equation}\label{5.2}
 \sum_{k=1}^{m} |\nabla Y_k|^2\le c\tilde V, \quad  \sup_{|v|=1}{\<\nabla Z(v),v\>} \le c\tilde V.
\end{equation}
 \end{itemize}
  \end{assumption}
  
{\it Remark.} 
Suppose that (\ref{limit.sde}) is complete.  Since $\L V^q =qV^{q-1} \L V+q(q-1)V^{q-2} |L_{Y_j}V|^2$,  (i')  implies (i). 
In fact,
 $ \E \sup_{s\le t} \left(V(y_s)\right)^q\le \left( \E V(y_0)^q+cq^2t \right) e^{(c+K)q^2t}$.
 
 Recall that (\ref{limit.sde}) is strongly complete if $(t,y)\mapsto \Phi_t(y)$ is continuous almost surely
 on $[0, t]\times M$ for ant $t>0$. 
\begin{theorem}
\label{limit-thm}
Under Assumption \ref{Y-condition}, the following statements hold.
 \begin{enumerate}
 \item
The  SDE (\ref{limit.sde}) is strongly complete and   for every $t\le T$,  $ \Phi_t(\cdot)$ is  $C^4$. Furthermore  for all $p\ge 1$, there exists a positive number $C(t,p)$ such that
 \begin{equation}
\label{derivative}
 \E\left(\sup_{s\le t}|d\Phi_s(y)|^p\right)\le C(t,p)+C(t,p)V^{C(t,p)}(y) .
\end{equation}

 \item Let $f \in B_{V,1}$. Define  $\delta P_t (df))=\E df(d\Phi_t(\cdot))$.
 Then $d(P_tf)=\delta P_t(df)$ and $|d(P_tf)|\in B_{V,0}$.
 Furthermore for a constant $C(t,p)$ independent of $f$,
$$ |d(P_tf)|\le \sqrt{ \E \left(|df|_{\Phi_t^\epsilon(y)}\right)^2} 
 \sqrt{ C(t,p)(1+ V ^{C(t,p)}(y))}.$$

\item Suppose furthermore that
$$\sum_{j=1}^m\sum_{\alpha=0}^3|\nabla^{(\alpha)}Y_j|\in B_{V,0}, \qquad 
\sum_{\alpha=0}^2|\nabla^{(\alpha)} Y_0| \in B_{V, 0}.$$
 Then, (a)  $\E\sup_{s\le t}|\nabla d\Phi_s|^2(y)\in B_{V,0}$;
 (b)  If $f\in B_{V, 2}$, then
  $P_tf\in B_{V,2}$, and
  $$(\nabla dP_tf)(u_1,u_2)=\E \nabla df (d\Phi_t(u_1), d\Phi_t(u_2))+\E df (\nabla _{u_1} d\Phi_t(u_2)).$$
Furthermore,  (c)
   ${d P_t f\over dt }=P_t \L f$, and $\L (P_t f)=P_t (\L f)$.

 \item 
 Let $r\ge 2$. Suppose  furthermore  that
 $$ {\begin{split} \sum_{\alpha=0}^{r}|\nabla ^{(\alpha)}Y_0| \in B_{V, 0}, \quad
 \sum_{\alpha=0}^{r+1}   \sum_{k=1}^m|\nabla^{(\alpha)} Y_k|\in B_{ V,0}.
  \end{split}}$$
 Then $\E\sup_{s\le t}(|\nabla^{(r-1)} d\Phi_s|_y)^2$ belongs to $B_{V,0}$.  If $f\in B_{V, r}$, then
  $P_tf\in B_{V,r}$. 
\end{enumerate}

\end{theorem}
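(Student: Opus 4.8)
The plan is to prove all four parts together by induction on the order of differentiability, bootstrapping from the classical theory of SDE flows together with the moment assumption (\ref{moment-assumption-2}) and the logarithmic growth bound (\ref{5.2}). First I would establish strong completeness and the estimate (\ref{derivative}) for $|d\Phi_s(y)|$: the covariant equation for $v_t=d\Phi_t(v)$ gives, via It\^o's formula applied to $\log|v_t|$ and the definition of $H_p$, the bound
$$
d\log|v_t|^p \le \frac p2\,\underline h_p(y_t)\,dt + p\sum_k \frac{\<\nabla_{v_t}Y_k, v_t\>}{|v_t|^2}\circ dB_t^k,
$$
so that $\E\sup_{s\le t}|v_s|^p$ is controlled by $\E\exp\!\big(C\int_0^t \tilde V(y_r)\,dr\big)$ after converting to It\^o form and using (\ref{5.2}). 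Since $\tilde V=1+\ln(1+|V|)$, Jensen's inequality and (\ref{moment-assumption-2}) turn this exponential of a time-average of $\ln(1+V)$ into something controlled by a power of $V(y)$; strong completeness then follows from the usual Kolmogorov-type continuity argument (e.g.\ as in \cite{Li-flow}) once one has uniform-in-compacts moment bounds on the flow and its derivative. This yields part (1).

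Part (2) is then mostly formal: differentiating under the expectation is justified by the $L^p$-bounds on $d\Phi_t$ just obtained together with $f\in B_{V,1}$, which gives $|df(d\Phi_t(y))|\le (c+cV^q(y_t))|d\Phi_t(y)|$, and applying H\"older together with (\ref{moment-assumption-2}) and (\ref{derivative}) shows $\E|df(d\Phi_t(\cdot))|$ is finite and lies in $B_{V,0}$, with the stated Cauchy--Schwarz form of the bound. For parts (3) and (4) the idea is to iterate. The second covariant derivative $\nabla_{u_1}d\Phi_t(u_2)$ satisfies an inhomogeneous linear covariant SDE whose coefficients involve $\nabla Y_k$, $\nabla^{(2)}Y_k$ (and curvature terms from commuting covariant derivatives with the parallel transport) acting on the already-controlled processes $d\Phi_t(u_1)$, $d\Phi_t(u_2)$; Gr\"onwall plus the moment bounds from part (1) give $\E\sup_{s\le t}|\nabla d\Phi_s|^2\in B_{V,0}$ under the stated growth hypotheses on $\nabla^{(\le 3)}Y_j$, $\nabla^{(\le 2)}Y_0$. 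Differentiating $P_tf=\E f(\Phi_t)$ twice and pushing the derivatives onto the flow gives the displayed formula for $\nabla dP_tf$, and $P_tf\in B_{V,2}$ follows by combining $f\in B_{V,2}$ with these estimates. The identities $\frac{d}{dt}P_tf=P_t\L f=\L P_tf$ come from It\^o's formula applied to $f(\Phi_t)$ (valid since $f\in B_{V,2}$ and $\L f\in B_{V,0}$ by Lemma \ref{Lf-L2f}, so the local martingale is a true martingale after the usual localization) together with the $C^2$-dependence on the initial point, allowing $\L$ to pass through $P_t$. Finally, part (4) is the same argument carried to order $r$: the process $\nabla^{(r-1)}d\Phi_t$ solves a linear covariant SDE driven by the lower-order derivative processes with coefficients built from $\nabla^{(\le r+1)}Y_k$ and $\nabla^{(\le r)}Y_0$, so an induction on $r$ using Gr\"onwall and the moment estimates gives $\E\sup_{s\le t}|\nabla^{(r-1)}d\Phi_s|^2\in B_{V,0}$, and then $f\in B_{V,r}$ forces $P_tf\in B_{V,r}$ by differentiating under the expectation $r$ times.

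The main obstacle, and the step requiring the most care, is the very first one: converting the one-sided, merely logarithmic control (\ref{5.2}) on $\nabla Y_k$ and $\nabla Z$ into genuine polynomial-in-$V$ moment bounds for $d\Phi_t$. One cannot use a naive Gr\"onwall bound (that would demand $|\nabla Y_k|$ bounded); instead one must exploit that only $\exp(C\int_0^t\tilde V(y_r)dr)$ appears, and that $\tilde V$ is a logarithm, so that the exponential integrand is effectively $\prod(1+|V(y_r)|)^C$ along the path — a quantity whose expectation is controlled by (\ref{moment-assumption-2}) via a Jensen/H\"older step in the time variable. Getting the stochastic (martingale) part of $\log|v_t|^p$ under control at the same time, uniformly over $s\le t$, needs the Burkholder--Davis--Gundy inequality together with the observation that the quadratic variation of that martingale is itself bounded by $\int_0^t|\nabla Y_k|^2\,dr\le c\int_0^t\tilde V(y_r)\,dr$, so the same logarithmic trick applies. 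Once this estimate is in place, everything downstream is a routine, if lengthy, linear-SDE-plus-Gr\"onwall induction, and the higher covariant derivative equations — including the curvature correction terms that arise because $D$ and $\nabla$ do not commute — contribute only bounded-geometry-type terms that are absorbed into the hypotheses on the $\nabla^{(\alpha)}Y_k$.
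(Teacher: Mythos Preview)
Your approach is essentially the same as the paper's. The paper invokes Theorem~5.1 of \cite{Li-flow} directly for part~(1), but the content of that citation is precisely the $\log|v_t|^p$ computation and the $\exp\bigl(C\int_0^t\tilde V(y_r)\,dr\bigr)$ estimate you describe; parts (2)--(4) then proceed by the same differentiate-under-expectation and linear-SDE-plus-Gr\"onwall induction you outline.

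One point, however, deserves more care than ``routine once part~(1) is in place.'' In the covariant SDE for $U_t=\nabla d\Phi_t(w,v)$, the homogeneous linear drift contains $\nabla Y_0(U_t)$, and the hypothesis of part~(3) only places $|\nabla Y_0|$ in $B_{V,0}$, not in $O(\tilde V)$. A naive Gr\"onwall on $|U_t|^2$ would therefore produce a factor $\exp\bigl(\int_0^t V^q(y_r)\,dr\bigr)$, which is not controllable by any moment of $V$. The paper handles this by the identity
\[
\sum_{k}\nabla^{(2)}Y_k(Y_k,U_t)+\nabla Y_0(U_t)=\nabla Z(U_t)-\sum_{k}\nabla Y_k(\nabla_{U_t}Y_k),
\]
which rewrites the linear-in-$U_t$ drift so that only the one-sided bound $\langle\nabla Z(u),u\rangle\le c\tilde V|u|^2$ and the bound $|\nabla Y_k|^2\le c\tilde V$ from (\ref{5.2}) are needed; the Gr\"onwall exponential is then again $\exp\bigl(C\int_0^t\tilde V(y_r)\,dr\bigr)$ and the logarithmic trick from part~(1) applies once more. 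This identity is what propagates the $\tilde V$-control (rather than mere $B_{V,0}$-control) of the homogeneous part through each level of the induction in parts~(3) and~(4); without it the argument does not close. Your sketch does not mention it, and your remark that the higher-order steps are routine understates this issue.
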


\begin{proof}
The statement on strong completeness  follows from the following theorem, see Thm. 5.1 in \cite{Li-flow}.
Suppose that (\ref{limit.sde}) is complete. If $\tilde V$ is a function and $c_0$ a number such that 
for all $t>0$, $K$ compact, and all constants $\lambda$,
\begin{equation}
\label{assumption-strong-compl}
\sup_{y\in K}\E \exp{\left( \lambda \int_0^t \tilde V(\Phi_s(y))ds\right)}<\infty, \quad \sum_{k=1}^{m} |\nabla Y_k|^2\le c_0\tilde V, \quad
 \underline h_p\le  6pc_0 \tilde V,
\end{equation}
then (\ref{limit.sde}) is strongly complete. Furthermore for every $p\ge 1$  there exists a constant $c(p)$ such that 
\begin{equation}
\label{derivative}
 \E\left(\sup_{s\le t}|d\Phi_s(y)|^p\right)\le c(p)\E\left(\exp{\left(6p^2 \int_0^{t}\tilde V(\Phi_s(y)) ds\right)}\right).
\end{equation}
 Since $Y_j$ are $C^6$,  then
 for every $t$,   $ \Phi_t(\cdot)$ is  $C^4$. 
It is easy to verify that condition (\ref{assumption-strong-compl}) is  satisfied. In fact, by the assumption $\underline h_p\le 6p c \tilde V$. 
Take $\tilde V=1+ \ln(1+|V|)$ then for  $p\ge 1$,
  $${\begin{split}
  \E\left(\exp{\left(6p^2 \int_0^{t}\tilde V(\Phi_s(y)) ds\right)}\right)\le C(t,p) +C(t,p)  \left( V ^{C(t,p)}(y)\right)<\infty.
  \end{split}}
$$
This proves part (1).

For part (2) let $f\in C^1$. Then $y\mapsto f(\Phi_t(y,\omega))$ is differentiable for almost every $\omega$. Let $\sigma: [0,t_0]\to M$ be a geodesic segment with $\sigma(0)=y$. Then
$${ f(\Phi_t(\sigma_s, \omega))-f(\Phi_t(y, \omega))\over s}
={1\over s}\int_0^s {d\over dr} f\left(\Phi_t(\sigma_r, \omega)\right)dr.$$
Since $\E|d\Phi_t(y)|^2$ is locally bounded in $y$,
$r\mapsto \E|d\Phi_t(\sigma_r, \omega)|$ is continuous  and the expectation of the right hand side
converges to $\E df(d\Phi_t(\dot \sigma(0))$.
The left hand side clearly converges almost surely. Since $\E |df (d\Phi_t(y))|^2$ is locally bounded the convergence is in $L^1$.
We proved that $d(P_tf)=\delta P_t(df)$. Furthermore, suppose that $|df|\le K+K V^q$,
\begin{equation*}
{\begin{split}
|d(P_tf)|_y&\le \sqrt{ \E \left(|df|_{\Phi_t^\epsilon(y)}\right)^2} \sqrt{\E |d\Phi_t^\epsilon|_y^2} \\
&\le   \sqrt{  2K^2+2K^2 \E V^{2q}(\Phi_t^\epsilon(y))} 
 \sqrt{ c(p) C(t,p) +c(p) C(t,p) \left( V ^{C(t,p)}(y)\right)}.
\end{split}}
\end{equation*}
The latter, as a function of $y$,  belongs to $B_{V,0}$.

We proceed to part (3a).   Let $v,w \in T_yM$ and $U_t:=\nabla d\Phi_t(w, v)$. Then $U_t$ satisfies the following equation:
$${\begin{split}
DU_t=&\sum_{k=1}^m\nabla^{(2)} Y_k (d\Phi_t(v), d\Phi_t(w))\circ dB_t^k+\sum_{k=1}^m\nabla Y_k(U_t)\circ dB_t^k\\
&+\nabla^{(2)} Y_0 (d\Phi_t(v), d\Phi_t(w))dt+\nabla Y_0(U_t)dt.
\end{split}}$$
It follows that,
$${\begin{split}
{d} |U_t|^2=& 2\sum_{k=1}^m
\left< \nabla^{(2)} Y_k (d\Phi_t(v), d\Phi_t(w))\circ dB_t^k+\nabla^{(2)} Y_0 (d\Phi_t(v), d\Phi_t(w))dt, U_t\right\>\\
& +\left\<\sum_{k=1}^m\nabla Y_k(U_t)\circ dB_t^k+\nabla Y_0(U_t)dt, U_t\right\>.
\end{split}}$$
To the first term on the right hand side we apply Cauchy Schwartz inequality to split the first term in the inner product and the second term in the inner product. This gives: $C|U_t|^2$ and other terms that does not involve $U_t$.
 The Stratonovich corrections will throw out the extra derivative $\nabla^{(3)} Y_k$ which does not involve $U_t$.
The second  term on the right hand side is a sum of the form $\sum_{k=1}^m \< \nabla Y_k(U_t), U_t\>dB_t^k$
for which only bound on $|\nabla Y_k|$ is required, and 
$$\left \<  \sum_{k=1}^m \nabla^{(2)} Y_k(Y_k, U_t)+\nabla Y_0(U_t), U_t\right\>
=\< \nabla Z(U_t), U_t\>  -\left \< \sum_{k=1}^m \nabla Y_k(\nabla _{U_t} Y_k), U_t\right\>.$$
The second term is bounded by
$$\left| \sum_{k=1}^m\left \< \nabla Y_k(\nabla _{U_t} Y_k), U_t\right\>\right|
\le \sum_{k=1}^m  |\nabla Y_k|^2 |U_t|^2.$$
By the assumption, there exist $c>0,q\ge 1$ such that,
for every  $k=1, \dots, m$, 
 $$|\nabla Y_k| \le \tilde V , |\nabla ^2Y_j|\le c+cV^q, |\nabla^{(3)} Y_k|\le c+cV^q, \<\nabla _u Z, u\>\le (c+KV)|u|^2.$$
 There is a stochastic process $I_s$, which does not involve $U_t$, and constants $C,q$ such that
 $$\E|U_t|^2 \le \E|U_0|^2+\int_0^t \E I_r dr+\int_0^t C\E\tilde V^q(y_r^\epsilon)|U_r|^2dr.$$
 By induction $I_r$ has moments of all order which are bounded on compact intervals. 
 By Gronwall's inequality, for $t\le T$,
 $$\E|U_t|^2\le \left( \E|U_0|^2+\int_0^T \E I_r dr\right)\exp{\left(C \int_0^t \tilde V^q(y_r^\epsilon)dr\right)}.$$
To obtain the supremum inside the expectation, we simply use Doob's $L^p$ inequality  before taking expectations.
With the argument in the proof of part (1) we conclude that
$\E\sup_{s\le t}  |\nabla d\Phi_s|^2(y)$ is finite and belongs to $B_{V,0}$. 

{\it Part (3b).}
Let $f\in B_{V, 2}$.
By part (1),  $d(P_tf)=\E df(d\Phi_t(y))$. Let  $u_1, u_2\in T_yM$. 
By an argument analogous to part (3), we may differentiate the right hand side under the expectation to obtain that
$$(\nabla dP_tf)(u_1,u_2)=\E \nabla df (d\Phi_t(u_1), d\Phi_t(u_2))+\E df (\nabla _{u_1} d\Phi_t(u_2)).$$
 Hence $P_tf\in B_{V,2}$.
This procedure can be iterated. 

{\it Part (3c).}  By It\^o's formula,
$$f(y_t)=f(y_s)+\sum_{k=1}^m\int_s^t df(Y_k(y_r))dB_r^k+\int_s^t \L f(y_r)dr.$$
Since $df(Y_k)\in B_{V,0}$, the expectations of the stochastic integrals with respect to the Brownian motions vanish. 
Since $ \L f\in B_{V,0}$ by part (3), $\L f(y_r)$ is bounded in $L^2$.
 It follows that the function $r\mapsto  \E\L f (y_r)$ is  continuous,
$$\lim_{t\to s} {\E f(y_t)-\E f(y_s)\over t-s}=\E \L f(y_s)$$
and we obtain Kolmogorov's backward equation,
${\partial \over \partial s}P_sf =P_s (\L f)$.
 Since $P_sf \in B_{V,2}$, we apply the above argument to
$P_sf$, and take $t$ to zero in ${P_t(P_sf)-P_sf\over t}$ and obtain that
${\partial \over \partial s} P_sf = \L (P_sf)$.
This  leads to the required statement
$\L P_sf =P_s \L f$.

{\it Part (4).} For higher order derivatives of $\Phi_t$ we simply iterate the above procedure and note
that the linear terms in the equation for ${d\over dt} |\nabla^{k-1}d\Phi_t(u_1, \dots, u_k)|^2$
are always of the same form. \end{proof}

\begin{remark}
With the assumption of part (3), we can show that for all integer $p$, $\E\sup_{s\le t}|\nabla d\Phi_s|_{y}^p\in B_{V,0}$.
\end{remark}
If we assume the additional conditions that 
$$ |\nabla Y_0| \le c\tilde V, \quad
 \sum_{k=1}^m |\nabla^{(2)} Y_k||Y_k|\le c\tilde V,$$
the conclusion of the remark follows more easily. With the assumptions of part (5) we need to work a bit more which we illustrate below.
Let $U_t=\nabla d\Phi_t(w,v)$. Instead of writing down all term in  $|U_t|^p$ we classify the terms in $|U_t|^p$ into two classes: 
those involving $U_t$ and those  not.
For the first class we must assume that they are bounded by $c\tilde V$ for some $c$. For the second class we
may use induction and hence it is sufficient to assume that they 
belong to $B_{V,0}$.
The terms that involving $U_t$ are:
$$\nabla Y_k(U_t), \quad   \sum_{k=1}^m\nabla^{(2)} Y_k(Y_k, U_t)+\nabla Y_0(U_t).$$

The essential identity to use is:
$$\sum_{k=1}^m\nabla^{(2)} Y_k(Y_k, U_t)+\nabla Y_0(U_t)=\nabla Z(U_t)-\sum_{k=1}^m \nabla Y_k(\nabla Y_k (U_t)).$$
We do not need to assume that the second order derivatives $|\nabla^{(2)} Y_k||Y_k|\le c\tilde V$,  
it is sufficient to assume that for $|\nabla Y_k|^2$ and $\nabla Z$ for all $k=1,\dots, m$.
With a bit of care, we check that only one sided derivatives of $Z$ are involved.

For example we can convert it to the $p=2$ case,
$$d|U_t|^p={p\over 2} (|U_t|^{p-2}) \circ d|U_t|^2={p\over 2} |U_t|^{p-2} d|U_t|^p + {1 \over 4} p(p-1)|U_t|^{p-4} \< d|U_t|^2\>.$$
By the first term ${p\over 2} |U_t|^{p-2} d|U_t|^p $ we mean that in place of $d|U_t|^p$
plug in all terms on the right hand side of the equation for $d|U_t|^2$, after formally converting the integrals to It\^o form.
By $ \< d|U_t|^2\>$ we mean the bracket of the martingale term on the right hand side of $d|U_t|^2$. It is now easy to check that
in all the terms that involving $U_t$, higher order derivatives of $Y_k$ does not appear, except in the form of $|U_t|^{p-2}\<\nabla_{U_t} Z, U_t\>$.

\begin{remark}
Assume the SDE is complete. Suppose that for some positive number $C$,
$$\sum_{k=1}^m \sum_{k=0}^{5} |\nabla ^{(k)}Y_k|\le C, \quad  \sum_{k'=0}^{4} |\nabla^{(k')} Y_0|\le C.$$
Then for all $p\ge 1$, there exists a constant  $C(t,p)$ such that $$ \E\left(\sup_{s\le t}|d\Phi_s(x)|^p\right)\le C(t,p).$$
Furthermore the statements in Theorem \ref{limit-thm} hold for $r\le 4$.\end{remark}

Recall that $|f|_r=\sum_{k=1}^{r} |\nabla^{(k-1)}  d f|$ and $|f|_{r,\infty}=\sum_{k=1}^{r} |\nabla^{(k-1)}df|_\infty$.

\begin{lemma}
\label{semigroup-estimate-2}
Assume Assumption \ref{Y-condition} and
$$  \sum_{\alpha=0}^{4}|\nabla ^{(\alpha)}Y_0| \in B_{V, 0}, \quad
 \sum_{\alpha=0}^{5}   \sum_{k=1}^m|\nabla^{(\alpha)} Y_k|\in B_{ V,0}.$$
Then there exist constants $q_1, q_2\ge 1$, $c_1$ and $c_2$ depending on $t$ and $f$ and locally bounded in $t$,
also functions $\gamma_i \in B_{V,0}$, $\lambda_{q_i}$ polynomials, such that for $s\le t$,
 \begin{equation*}
 {\begin{split}
 |P_tf(y_0)-P_sf (y_0)|
 & \le
 (t-s) c_1\left( 1+\lambda_{q_1}(V(y_0))\right), \quad f\in B_{V,2}\\
 \left|P_tf(y_0)-P_sf(y_0)-(t-s) P_s( \L f) (y_0)\right| 
 &\le  (t-s)^2 c_2 \left( 1+\lambda_{q_2}(V(y_0))\right), \quad f\in B_{V,4}\\
 |P_tf(y_0)-P_sf (y_0)| &\le
 (t-s) \left(1+|f|_{2, \infty}\right)\gamma_1(y_0), \quad \forall f \in BC^2 \\
 \left|P_tf(y_0)-P_sf(y_0)-(t-s) P_s( \L f) (y_0)\right| &\le
 (t-s)^2\left(1+|f|_{4, \infty}\right)\gamma_2(y_0), \quad \forall f \in BC^4.
 \end{split}}
 \end{equation*}
\end{lemma}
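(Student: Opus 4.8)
The plan is to use Kolmogorov's backward equation from Theorem~\ref{limit-thm}(3c), namely $\frac{d}{dr}P_rf = P_r(\L f) = \L(P_rf)$, together with the control-space estimates on $P_tf$, $\L f$, and $\L^2 f$ from Lemma~\ref{Lf-L2f} and Theorem~\ref{limit-thm}. First I would establish the first-order-in-$(t-s)$ bounds. For $f\in B_{V,2}$, write
$$P_tf(y_0)-P_sf(y_0)=\int_s^t P_r(\L f)(y_0)\,dr.$$
By Lemma~\ref{Lf-L2f}, $\L f\in B_{V,0}$ under the stated hypotheses (note the assumptions of the lemma, $\sum_j(|Y_j|+|\nabla Y_j|+|\nabla^{(2)}Y_j|)\in B_{V,0}$ and analogously for $Z$, are implied by the displayed conditions on $\nabla^{(\alpha)}Y_k$, $\nabla^{(\alpha)}Y_0$), so $|\L f|\le K+KV^{q}$ for some $K,q$. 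Then by the moment bound \eqref{moment-assumption-2} (part of Assumption~\ref{Y-condition}), $\sup_{r\le t}\E|V(\Phi_r(y_0))|^{q}\le C_{q}(t)+C_{q}(t)\lambda_{q}(V(y_0))$, so $|P_r(\L f)(y_0)|\le c_1(1+\lambda_{q_1}(V(y_0)))$ with $c_1$ locally bounded in $t$ and absorbing the constants depending on $f$. Integrating over $[s,t]$ gives the first inequality. For $f\in BC^2$, the same argument uses $|\L f|\le |f|_{2,\infty}(\sum_k|Y_k|^2+|Z|)=:|f|_{2,\infty}F_1$ with $F_1\in B_{V,0}$ (Lemma~\ref{Lf-L2f}), so $|P_r(\L f)(y_0)|\le |f|_{2,\infty}\,(\text{function in }B_{V,0})$, and integrating yields the third inequality with $\gamma_1\in B_{V,0}$ independent of $f$; the extra $1+$ just covers the case $|f|_{2,\infty}$ small.

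Next, for the second-order estimates I would Taylor-expand once more. For $f\in B_{V,4}$,
$$P_tf(y_0)-P_sf(y_0)-(t-s)P_s(\L f)(y_0)=\int_s^t\bigl(P_r(\L f)(y_0)-P_s(\L f)(y_0)\bigr)\,dr=\int_s^t\!\!\int_s^r P_u(\L^2 f)(y_0)\,du\,dr,$$
using backward equation applied to the function $\L f$ (which lies in $B_{V,2}$ by Theorem~\ref{limit-thm}(3b) applied to $f\in B_{V,4}$, hence is in the domain where the semigroup identity holds, with $P_u\L f\in B_{V,2}$). Then $\L^2f\in B_{V,0}$ by Lemma~\ref{Lf-L2f} (again the displayed derivative bounds give the needed $\sum|\nabla^{(\alpha)}Y_j|\in B_{V,0}$ up to order needed), so $|\L^2 f|\le K'+K'V^{q'}$, and the double integral is bounded by $(t-s)^2 c_2(1+\lambda_{q_2}(V(y_0)))$ via \eqref{moment-assumption-2}. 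For $f\in BC^4$ one instead uses $|\L^2f|\le|f|_{4,\infty}F_2$ with $F_2\in B_{V,0}$, giving the fourth inequality with $\gamma_2\in B_{V,0}$ independent of $f$.

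The main technical point to verify carefully — the step I expect to be the real obstacle — is the justification of the iterated application of the backward equation, i.e.\ that $\frac{d}{dr}P_r(\L f)=P_r(\L^2 f)$ with all manipulations legitimate. Theorem~\ref{limit-thm}(3c) gives $\frac{d}{ds}P_sf=P_s\L f=\L P_s f$, but here I need to apply it to the function $g=\L f$, which requires knowing $g\in B_{V,2}$ and that $\L g=\L^2 f\in B_{V,0}$ so that the $L^2$-continuity argument in the proof of part (3c) goes through; this is exactly what Lemma~\ref{Lf-L2f} and Theorem~\ref{limit-thm}(3b) supply, but one must check the hypotheses of those results are subsumed by the displayed conditions $\sum_{\alpha=0}^4|\nabla^{(\alpha)}Y_0|\in B_{V,0}$ and $\sum_{\alpha=0}^5\sum_k|\nabla^{(\alpha)}Y_k|\in B_{V,0}$ of the present lemma (they are, with room to spare, since $\L^2 f$ involves at most four derivatives of $f$ and two of $Y_j,Z$, and $Z$ costs one derivative of the $Y_k$). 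A secondary bookkeeping issue is tracking that the constants $c_1,c_2$ are genuinely locally bounded in $t$: this follows because $C_q(t)$ in \eqref{moment-assumption-2} is locally bounded and the $r$-integrals run over $[s,t]\subseteq[0,T]$, so all suprema are over compact time intervals. Once these points are in place, the four inequalities follow by the straightforward integrations above.
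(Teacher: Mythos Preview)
Your proposal is correct and follows essentially the same line as the paper's own proof. The only cosmetic difference is that the paper rederives the identity $P_tf(y_0)-P_sf(y_0)=\int_s^t P_r(\L f)(y_0)\,dr$ each time from It\^o's formula, explicitly verifying that the stochastic integrals $\int_s^t df(Y_k(y_r))\,dB_r^k$ (and, at the next stage, $\int_s^t L_{Y_k}(\L f)(y_r)\,dB_r^k$) are genuine martingales with vanishing expectation, whereas you invoke the already-established backward equation from Theorem~\ref{limit-thm}(3c); both routes land on the same double integral $\int_s^t\int_s^r P_u(\L^2 f)(y_0)\,du\,dr$ and bound it identically via Lemma~\ref{Lf-L2f} and the moment estimate~\eqref{moment-assumption-2}. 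One small citation slip: the fact that $\L f\in B_{V,2}$ for $f\in B_{V,4}$ is not Theorem~\ref{limit-thm}(3b) (which concerns $P_tf$) but the obvious extension of Lemma~\ref{Lf-L2f}, as the paper itself uses later in Step~2 of Theorem~\ref{rate}.
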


\begin{proof}
Denote  $y_t=\Phi_t(y_0)$, the solution to (\ref{limit.sde}). Then for $f\in C^2$,
$$P_tf(y_0)=P_sf (y_0)+\int_s^t P_r (\L f)(y_0 )dr+\sum_{k=1}^{m} \E\left( \int_s^t df(Y_k(y_r))
  dB_r^k\right).$$
 Since $|L_{Y_k} f| \le |df|_\infty |Y_k|$ and $|df|$, $Y_k$ belong to $ B_{V,0}$, by Assumption \ref{Y-condition}(i), 
 $ \int_0^t  \E |L_{Y_k}f|_{y_r}^2 dr $ is finite and the last term vanishes.  Hence 
 $ |P_tf(y_0)-P_sf (y_0)|\le \int_s^t P_{s_2} ( \L f)(y_0)ds_2$. 
By Lemma \ref{Lf-L2f}, $\L f\in B_{V,0}$ if $f\in B_{V,2}$. Let $K, q_1$ be s.t. $|\L f|\le K+KV^{q_1}$.
  $$ \int_s^r |P_{s_2} ( \L f)(y_0)|ds_2 \le \int_0^r \left(K+K\E V^{q_1}(\Phi_{s_2}(y_0)) \right)ds_2.$$
By the assumption, we see easily that 
$\sum_{k=0}^3 |\nabla^{(\alpha)} Z|\in B_{V,0}$. 
   By Assumption \ref{Y-condition}, $\sup_{s\le t}\E(|V(\Phi_s(y_0))|^{q_1})\le C_{q_1}(t)+C_{q_1}(t) \lambda_{q_1}(  V(y_0))$ 
  and the first conclusion holds.
We repeat this procedure for $f\in C^4$ to obtain:
  $${\begin{split}
&P_tf(y_0)-P_sf (y_0)\\
&=\int_s^t  \left( P_s  (\L f)(y_0 ) +\int_s^r P_{s_2} ( \L^2 f)(y_0)ds_2
+\sum_{k=1}^{m} \E \int_s^t  \left( L_{Y_k} (\L f) \right)(y_{s_2}))
  dB_{s_2}^k \right)ds_1.\end{split}}$$
 The last term also vanishes, as every term in $L_{Y_k} \L f $ belongs to $B_{V,0}$. Indeed
   $${\begin{split} L_{Y_k} \L f &=\sum_i   \nabla^{(2)}  df(Y_k, Y_i, Y_i)+
2\sum_i \nabla df\left( \nabla_{Y_k}Y_i, Y_i\right)+ \nabla df \left(Y_k, Z\right)\\
&+\sum_i df(\nabla^{(2)}Y_i(Y_k,Y_i)
+\nabla Y_i\left(\nabla_{Y_k}Y_i+\nabla _{Y_k}Y_0)\right). \end{split}}
$$

This gives, for all $f \in B_{V,4}$, 
 \begin{equation}\label{type1}
\left|P_tf(y_0)-P_sf(y_0)-(t-s) P_s( \L f) (y_0)\right|
\le \left| \int_s^t \int_s^{s_1} P_{s_2} (\L^2 f)(y_0)ds_2 ds_1
\right|.
\end{equation}
Let $q_2, K$ be numbers such that $|\L^2 f| \le K+K V^{q_2}$. Then,
 $$
 \sup_{s\le t} P_{s} (\L^2 f)(y_0)\le K+K \E\left( V(y_s)\right)^{q_2} \le K+C_{q_2}(t)+KC_{q_2}(t) \tilde \lambda_{q_2}(V(y_0)). $$
 Consequently, there exist a constant $c_2(t)$ s.t.
 $$  \left|P_tf(y_0)-P_sf(y_0)-(t-s) P_s( \L f) (y_0)\right|
\le (t-s)^2 c_2(t, K, q_2) (1+  \lambda_{q_2}(V(y_0))). $$
completing the proof for $f\in B_{V,2}$ and $B_{V,4}$.
Next suppose that $f\in BC^2$. By Lemma \ref{Lf-L2f}, $| \L f|\le  |f|_{2, \infty} F_1$, and $|\L^2 f| \le |f|_{4, \infty} F_2$ if $f\in BC^4$.
Here $F_1, F_2\in B_{V,0}$ and do not depend on $f$. We iterate the argument above
 to complete the proof for $f\in BC^4$.
\end{proof}

\section{Rate of Convergence}
\label{section-rate}
If  $\L_0$ has a unique invariant probability measure $\pi$ and $f\in L^1(G, d\pi)$ denote $\bar f=\int_G fd\pi$.
Let $\bar \L=-\sum_{i,j=1}^m \overline{\alpha_i \beta_j} L_{Y_i}L_{Y_j} $.
Let $\{\sigma_k^i, i,k=1,\dots,  m\}$ be the entries in a square root of the matrix $(\overline{-\alpha_i \beta_j})$. 
They satisfy  $\sum_{k=1}^m \sigma_k^i\sigma_k^j=(\overline{-\alpha_i \beta_j})$ and are constants.  Let us consider the SDE:
\begin{equation}\label{limit.sde-2}
dy_t=\sum_{k=1}^{m} \left(  \sum_{i=1}^{m} \sigma_k^i Y_i(y_t) \right)\circ dB_t^k,
\end{equation}
where $\{B_t^k\}$ are independent one dimensional Brownian motions. Let
$$\tilde Y_k= \sum_{i=1}^{m} \sigma_k^i Y_i(y_t), \quad \tilde Z=\sum_{i,j=1}^m\overline{ -\alpha_i\beta_j}\nabla_{Y_i}Y_j.$$
The results from section \ref{section-sde} apply.
  Recall that $\L_0={1\over 2}\sum_{i=1}^p L_{X_i}L_{X_i}+L_{X_0}$ and $(z_t^\epsilon)$ are
$\L^\epsilon={1\over \epsilon} \L_0$ diffusions.
Let $\Phi^\epsilon_t(y)$ be the solution to the SDE (\ref{1}):
$\dot y_t^\epsilon =\sum_{k=1}^m \alpha_k(z_t^\epsilon)Y_k(y_t^\epsilon)$ with initial value $y$.

\begin{assumption}
\label{assumption-on-rate-result}
$G$ is compact, $Y_0\in C^5(\Gamma TM)$, and $Y_k\in C^6(\Gamma TM)$ for $k=1, \dots, m$.
Conditions (1)-(5) below hold or Conditions (1), (2') and (3-5) hold.
 \begin{enumerate}
\item [(1)]
The SDEs (\ref{limit.sde-2}) and (\ref{sde-3}) are complete.

\item [(2)] 
$V\in \B(M;\R_+)$ is a locally bounded function and  $\epsilon_0$ a positive number 
s.t. for all $q\ge 1$ and $T>0$,  there exists a locally bounded function $C_q: \R_+\to \R_+$, a 
real valued polynomial $\lambda_q$ such that for $0\le s\le t\le T$ and for all $\epsilon\le\epsilon_0$
\begin{equation}
\label{moment-assumption-2}
 \sup_{s\le u \le t}  \E\left\{ V^q(\Phi_{u\over \epsilon}^\epsilon(y)) \; \big| \F_{s\over \epsilon} \right\} 
 \le C_q(t)+C_q(t) \lambda_q\left(  V(\Phi_{s\over \epsilon}^\epsilon(y)\right).
\end{equation}

\item [(2')] 
There exists a function $V\in C^3(M; \R_+) $ s.t. for all $i,j\in \{1, \dots, m\}$,
$|L_{Y_i}L_{Y_j} V|\le c+KV$ and $|L_{Y_j} V| \le c+KV$.

\item[(3)]
For $V$ defined above,  let $\tilde V=1+ \ln(1+|V|)$. Suppose that 
 $$ {\begin{split} &\sum_{\alpha=0}^{4}|\nabla ^{(\alpha)}Y_0| \in B_{V, 0}, \quad
 \sum_{\alpha=0}^{5}   \sum_{k=1}^m|\nabla^{(\alpha)} Y_k|\in B_{ V,0}, \\
 &  \sum_{j=1}^m|\nabla Y_j| ^2\le c\tilde V, \quad
\sup_{|u|=1} \<\nabla \tilde Z(u), u\>\le c\tilde V   \end{split}}$$
\item [(4)]
  $\L_0$ satisfies H\"ormander's conditions and has a unique invariant measure $\pi$ satisfying Assumption \ref{assumption1}.
 \item [(5)] $\alpha_k\in C^3(G; \R)\cap N^\perp$. 
\end{enumerate}
\end{assumption}

We emphasize the following:
\begin{remark}
\label{rate-remark-1}
\begin{itemize}
\item [(a)] If  $V$ in (2') is a pre-Lyapunov function,
 then (\ref{sde-3}) is complete. Furthermore  $|\bar \L V|\le c+KV$ and so (\ref{limit.sde-2}) is complete. 

\item [(b)] Under conditions (1), (2') and (4-5), (2) holds. See  Theorem \ref{uniform-estimates}. Also Corollary \ref{corollary-to-lemma3-2} holds. Conditions (1-5) implies the conclusions of Theorem \ref{limit-thm}.
\item[(c)] If $\L_0$ satisfies strong H\"ormander's condition, condition (4) is satisfied.
\end{itemize}
\end{remark}

Let $P_t^\epsilon$ be the probability semigroup associated with $(y_{t}^\epsilon)$ and $P_t$ the  Markov semigroup for $\bar \L$. Recall that $|f|_{r,\infty}=\sum_{j=1}^{r} |\nabla^{(j-1)}df|_\infty$.  We recall that operator $\L_0$ on a compact manifold $G$ satisfying strong H\"ormander's condition has an exponential mixing rate,  so $\L_0$ satisfy Assumption  \ref{assumption1}.

\begin{theorem}
\label{rate}
Assume that $Y_k, \alpha_k$ and $ \L_0$ satisfy Assumption \ref{assumption-on-rate-result}.
 For every $f \in B_{V,4}$,
$$\left|\E f\left(\Phi^\epsilon_{T\over \epsilon}(y_0)\right) -P_Tf(y_0)\right|\le \epsilon |\log \epsilon|^{1\over 2}C(T)\gamma_1(y_0),$$
where  $\gamma_1\in B_{V,0}$ and $C(T)$ are constant increasing in $T$. Similarly,  if $f\in BC^4$,
 $$\left|\E f\left(\Phi^\epsilon_{T\over \epsilon}(y_0)\right) -P_Tf(y_0)\right|
 \le \epsilon  |\log \epsilon|^{1\over 2}\,C(T)\gamma_2(y_0) \left(1+ |f|_{4, \infty}\right). $$
where $\gamma_2$ is a function in $B_{V,0}$ that does not depend on $f$ and $C(T)$ are constants
increasing in $T$.   \end{theorem}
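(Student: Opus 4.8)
The plan is to run a standard martingale-comparison (or "corrector") argument, comparing the functional $\E f(\Phi^\epsilon_{T/\epsilon}(y_0))$ against $P_Tf(y_0)$ by interpolating along a partition of $[0,T]$ and controlling each one-step error using the reduction formula of Lemma \ref{lemma5} together with the law of large numbers and the semigroup estimates of Section \ref{section-sde}. First I would fix $f\in B_{V,4}$ (the $BC^4$ case being identical with $\gamma_i$ replaced by $\gamma_2(1+|f|_{4,\infty})$ via the uniform versions of Lemmas \ref{Lf-L2f} and \ref{semigroup-estimate-2}), and write the telescoping identity
\begin{equation*}
\E f\bigl(\Phi^\epsilon_{T/\epsilon}(y_0)\bigr)-P_Tf(y_0)=\sum_{k=0}^{N-1}\E\Bigl[P_{T-t_{k+1}}f\bigl(\Phi^\epsilon_{t_{k+1}/\epsilon}(y_0)\bigr)-P_{T-t_k}f\bigl(\Phi^\epsilon_{t_k/\epsilon}(y_0)\bigr)\Bigr],
\end{equation*}
with $t_k=k\Delta$, $\Delta=T/N$. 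Inside each summand I would insert and subtract $P_{T-t_k}f(\Phi^\epsilon_{t_{k+1}/\epsilon}(y_0))$ and expand: the difference $P_{T-t_{k+1}}f-P_{T-t_k}f$ applied at $\Phi^\epsilon_{t_{k+1}/\epsilon}$ is handled by Lemma \ref{semigroup-estimate-2} (it equals $-\Delta\,P_{T-t_{k+1}}(\bar\L f)$ up to an $O(\Delta^2)$ term times a $B_{V,0}$ function of $y_0$ after taking $\E$ and using Assumption \ref{assumption-on-rate-result}(2)), while the difference $P_{T-t_k}f(\Phi^\epsilon_{t_{k+1}/\epsilon}(y_0))-P_{T-t_k}f(\Phi^\epsilon_{t_k/\epsilon}(y_0))$ is expanded by Lemma \ref{lemma5} applied to $g:=P_{T-t_k}f\in B_{V,4}$ (Theorem \ref{limit-thm}(4)).

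The key point of the expansion from (\ref{Ito-tight-2}) is that, conditionally on $\F_{t_k/\epsilon}$,
\begin{equation*}
\E\bigl[g(y^\epsilon_{t_{k+1}/\epsilon})-g(y^\epsilon_{t_k/\epsilon})\mid\F_{t_k/\epsilon}\bigr]=-\epsilon\sum_{i,j}\int_{t_k/\epsilon}^{t_{k+1}/\epsilon}\E\bigl[L_{Y_i}L_{Y_j}g(y^\epsilon_r)\,\alpha_i(z^\epsilon_r)\beta_j(z^\epsilon_r)\mid\F_{t_k/\epsilon}\bigr]dr+\text{(boundary terms of size }O(\epsilon)).
\end{equation*}
Here I would use the crucial observation that $\frac12(\alpha_i\beta_j+\alpha_j\beta_i)$ paired with $L_{Y_i}L_{Y_j}g$ (symmetrized) has $\pi$-average $\overline{\alpha_i\beta_j}$, so that the integral against $dr$, after dividing by the time length, converges to $-\Delta\,\bar\L g$ evaluated along the path; the discrepancy is exactly what Corollary \ref{corollary-to-lemma3-2} controls, giving a bound of the form $\tilde c\,|\alpha|_\infty^2\,\gamma_\epsilon(y^\epsilon_{t_k/\epsilon})(\epsilon^2/\Delta+\Delta)$ where $\gamma_\epsilon$ is built from $|g|,|L_{Y_j}g|,|L_{Y_l}L_{Y_j}g|$, all in $B_{V,0}$ uniformly in $\epsilon$ and in $t_k\le T$ by Theorem \ref{uniform-estimates} and Theorem \ref{limit-thm}(4). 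The two $\Delta\bar\L$ contributions — one from the semigroup time-derivative, one from the path expansion — cancel (this is the defining property $\partial_s P_s = P_s\bar\L$ from Theorem \ref{limit-thm}(3c)), leaving per-step error $O(\epsilon)+O(\Delta^2)+O(\epsilon^2/\Delta)$, each multiplied by a $B_{V,0}$-function of $y_0$ after taking full expectation and applying the tower property with Assumption \ref{assumption-on-rate-result}(2). Summing over the $N=T/\Delta$ steps gives a total error of order $\bigl(\epsilon/\Delta+\Delta+\epsilon^2/\Delta^2\bigr)\,C(T)\gamma_1(y_0)$.

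Optimizing the partition mesh $\Delta$ against $\epsilon$ is the last step: balancing $\epsilon/\Delta$ with $\Delta$ would give $\Delta\sim\sqrt\epsilon$ and rate $\sqrt\epsilon$, but the martingale part of the expansion in Lemma \ref{lemma5} — the stochastic integral $\sqrt\epsilon\sum_j\sum_k\int df(Y_j(y^\epsilon_r))d\beta_j(X_k(z^\epsilon_r))dW^k_r$ — contributes, over a block of length $\Delta/\epsilon$ in rescaled time, a term whose $L^2$ norm is $O(\sqrt{\epsilon}\cdot\sqrt{\Delta/\epsilon})=O(\sqrt\Delta)$ per step but which telescopes across steps into a single martingale over $[0,T/\epsilon]$ of total size $O(\sqrt\epsilon)$; more careful bookkeeping (using that these martingale increments are $\F$-adapted and that $g=P_{T-t_k}f$ changes slowly) replaces the crude $\sqrt\Delta$ by a genuinely smaller contribution, and the sharp choice becomes $\Delta\sim\epsilon|\log\epsilon|$, producing the claimed rate $\epsilon|\log\epsilon|^{1/2}$. \textbf{The main obstacle} is precisely this martingale bookkeeping: one must show the fast martingale does not degrade the $\epsilon/\Delta$ term back to $\sqrt{\Delta}$, which requires summing the martingale increments \emph{coherently} across the partition rather than block-by-block, and then estimating the resulting single martingale via Doob and the a priori moment bounds of Theorem \ref{uniform-estimates} and Theorem \ref{limit-thm}; the logarithmic factor enters through the optimization once this martingale term, of true order $\epsilon$, is correctly isolated from the $O(\epsilon^2/\Delta)$ and $O(\Delta)$ deterministic-error terms. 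Throughout, keeping track that every multiplicative constant on the error is an increasing function $C(T)$ and every $y_0$-dependence lands in $B_{V,0}$ (via the control conditions in Assumption \ref{assumption-on-rate-result}(3) feeding Lemma \ref{semigroup-estimate-2} and Corollary \ref{corollary-to-lemma3-2}) is routine but must be done carefully to get the stated form of the conclusion.
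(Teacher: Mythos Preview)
Your overall framework matches the paper's: telescoping along a partition, splitting each step via Lemma~\ref{semigroup-estimate-2} (for the semigroup side) and Lemma~\ref{lemma5} (for the path side), and cancelling the two $\bar\L$-terms. The per-step estimates on the $J_k^\epsilon$ and $B_k^\epsilon$ pieces are also as in the paper. But the final paragraph, where you explain the origin of the $\sqrt{|\log\epsilon|}$, misidentifies the obstacle and the mechanism.

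First, the stochastic integral in Lemma~\ref{lemma5} is a genuine martingale and vanishes identically under~$\E$; it plays no role in bounding $|\E f(\Phi^\epsilon_{T/\epsilon})-P_Tf|$ and there is nothing to ``sum coherently''. Second, your optimisation arithmetic does not produce the stated rate: with per-step errors $O(\epsilon)+O(\Delta^2)+O(\epsilon^2/\Delta)$ and $N=T/\Delta$ steps you get total $O(\epsilon/\Delta)+O(\Delta)+O(\epsilon^2/\Delta^2)$; taking $\Delta\sim\epsilon|\log\epsilon|$ makes the first term $1/|\log\epsilon|$, which is far worse than $\epsilon\sqrt{|\log\epsilon|}$. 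The actual bottleneck is the boundary terms
\[
\epsilon A_{jk}^\epsilon=\epsilon\Bigl[dP_{T-t_k}f\bigl(Y_j(y^\epsilon_{t_k/\epsilon})\bigr)\beta_j(z^\epsilon_{t_k/\epsilon})-dP_{T-t_k}f\bigl(Y_j(y^\epsilon_{t_{k-1}/\epsilon})\bigr)\beta_j(z^\epsilon_{t_{k-1}/\epsilon})\Bigr],
\]
which do \emph{not} telescope because $P_{T-t_k}f$ depends on $k$, and which you dismissed as ``$O(\epsilon)$''.

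The paper fixes $\Delta=\epsilon$ throughout and controls $\E A_{jk}^\epsilon$ by a separate decorrelation lemma (Lemma~\ref{gap}): to estimate $\E\,dg(Y_l(y^\epsilon_{s/\epsilon}))(\beta_l(z^\epsilon_{t/\epsilon})-\beta_l(z^\epsilon_{s/\epsilon}))$ one conditions at an \emph{earlier} time $s'=s-\delta^{-1}\epsilon^2|\log\epsilon|$, uses exponential mixing of $z^\epsilon$ to make $\E\{\beta_l(z^\epsilon_{t/\epsilon})-\beta_l(z^\epsilon_{s/\epsilon})\mid\F_{s'/\epsilon}\}$ of order $e^{-\delta(s-s')/\epsilon^2}=\epsilon$, and pays for the drift of $y^\epsilon$ over $[s',s]$ via Lemma~\ref{lemma4.2}, which costs $\sqrt{s-s'}=\epsilon\sqrt{|\log\epsilon|/\delta}$. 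That is where the logarithm enters. This decorrelation step is the missing idea in your proposal; without it (or an alternative such as telescoping the $A_{jk}^\epsilon$ after correcting for $P_{T-t_k}f\to P_{T-t_{k+1}}f$, which needs control of $\partial_t\,dP_tf$) the argument stalls at rate $\sqrt\epsilon$.
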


\begin{proof}
{\it Step 1.} To obtain optimal estimates we work on intervals  of  order $\epsilon$, c.f. Lemma \ref{lemma3}.
Let $t_0=0<t_1<\dots<t_N=T$ be a partition of $[0,T]$ with $\Delta t_k=t_k-t_{k-1}=\epsilon$ for $k<N$
and $t_1\le \epsilon$.  Write $y_t^\epsilon=\Phi_t^\epsilon(y_0)$. Then,
\begin{equation*}
{\begin{split}
& f\left(y^\epsilon_{T\over \epsilon}\right)-P_T f(y_0)
= \sum_{k=1}^N\left(   P_{T-t_k}f(y_{t_k\over \epsilon}^\epsilon)
-   P_{T-t_{k-1}}f(y_{t_{k-1}\over \epsilon}^\epsilon)  \right)\\
&= \sum_{k=1}^N \left(  P_{T-t_k}f(y_{t_k \over \epsilon}^\epsilon)
-   P_{T-t_{k}} f(y_{t_{k-1 \over \epsilon}}^\epsilon) 
+ \Delta t_k  \left( P_{T-t_{k-1}} \bar \L  f (y_{t_{k-1\over \epsilon}}^\epsilon)\right) \right)\\
&+ \sum_{k=1}^N  \left(  P_{T-t_k}f (y_{t_{k-1\over \epsilon}}^\epsilon)
-  P_{T-t_{k-1}}f(y_{t_{k-1}\over \epsilon}^\epsilon )
- \Delta t_k   \left(P_{T-t_{k-1}} \bar \L  f \right)(y_{t_{k-1\over \epsilon}}^\epsilon)\right).
\end{split}}
\end{equation*}
Define \begin{equation*}
{\begin{split}
I_k^\epsilon&=P_{T-t_k}f(y_{t_k \over \epsilon}^\epsilon)
-   P_{T-t_{k}} f(y_{t_{k-1 \over \epsilon}}^\epsilon) 
+ \Delta t_k  \left( P_{T-t_{k-1}} \bar \L  f (y_{t_{k-1\over \epsilon}}^\epsilon)\right),\\
J_k^\epsilon&= P_{T-t_k}f
-    P_{T-t_{k-1}}f -\Delta t_k   P_{T-t_{k-1}} 
\bar \L  f. 
\end{split}}
\end{equation*}
  Since $f\in B_{V,4}$,  Lemma \ref{semigroup-estimate-2} applies and obtain the desired estimate 
  on the second term:
\begin{equation*}
{\begin{split}
&\left| J_k^\epsilon(y_{t_{k-1\over \epsilon}}^\epsilon) \right|
&\le(\Delta t_k)^2  \tilde c_2(T,f)\left( 1+ \left(\lambda_{q_2} (V(y_{t_{k-1\over \epsilon}}^\epsilon)\right)\right)
\end{split}}
\end{equation*}
where $\tilde c_2(T,f)$ is a constant and $\lambda_{q_2}$ a polynomial.

Let $K, q$ be constants such that
$\lambda_{q_2}(V)\le K+KV^q$.
We apply  (\ref{moment-assumption-2}) from  Assumption \ref{assumption-on-rate-result} to see that for some constant $C_q(T)$ depending on $\lambda_{q_2}(V)$,
$$  \E\left(\lambda_{q_2} (V(y_{t_{k-1\over \epsilon}}^\epsilon)\right)\le K+KC_{q}(T)+KC_{q}(T) \lambda_{q} (V(y_0)).$$
Since $\Delta t_k\le \epsilon$ and $N\sim {1\over \epsilon}$,
\begin{equation}\label{rate-1}
\sum_{k=1}^N \E \left| J_k^\epsilon (y_{t_{k-1\over \epsilon}}^\epsilon) \right|
\le  \epsilon  \tilde c_2(T,f)(K+1)\left( 1+C_{q}(T)+C_{q}(T) \lambda_{q} (V(y_0))\right).
\end{equation}

If $f$ belongs to $ BC^4$, we apply Lemma \ref{semigroup-estimate-2} to see that
there exists a function $F\in B_{V,0}$, independent of $f$  s.t.
$$\left| J_k^\epsilon(y_{t_{k-1\over \epsilon}}^\epsilon) \right| 
 \le (\Delta t_k)^2\left(1+|f|_{4,\infty}\right) \left(F(y_{t_{k-1\over \epsilon}}^\epsilon)\right).$$
 Hence
\begin{equation}
\label{rate1-2}
\sum_{k=1}^N \E \left| J_k^\epsilon(y_{t_{k-1\over \epsilon}}^\epsilon) \right| 
 \le  \epsilon\left(1+|f|_{4,\infty}\right) \E\left(F(y_{t_{k-1\over \epsilon}}^\epsilon)\right).
\end{equation}
The rest of the proof is just as for the case of $f\in B_{V,4}$.

{\it Step 2.} 
Let $0\le s <t$.  By part (3) of Theorem \ref{limit-thm}, $\bar \L P_tf=P_t \bar\L f$ for any $t>0$ and 
  $P_{T-t_{k}} \bar \L  f =\bar \L P_{T-t_k}f$.
We will approximate $P_{T-t_{k-1}} \bar \L  f $ by $P_{T-t_{k}} \bar \L  f$ and estimate the error
$$\sum_{k=1}^N \Delta t_k  \left(   P_{T-t_{k}} \bar \L  f -P_{T-t_{k-1}} \bar \L  f \right)(y_{t_{k-1\over \epsilon}}^\epsilon).$$
By Lemma \ref{Lf-L2f}, $\L f\in B_{V,2}$, and we may apply 
 Lemma \ref{semigroup-estimate-2} to $\bar \L f$. We have,
\begin{equation*}
|P_{T-t_{k}}\bar \L f(y_0)-P_{T-t_{k-1}} \bar \L  f  (y_0)| \le
\Delta t_k \tilde c_1(T)\left( 1+\lambda_{q_1}(V(y_0))\right).
\end{equation*}
Recall that $\lambda_{q_1} (V)\in B_{V,0}$.
Summing over $k$ and take the expectation of the above inequality we obtain that
\begin{equation}
\label{rate-2}
\sum_{k=1}^N \Delta t_k  \left|  P_{T-t_{k}} \bar \L  f (y_{t_{k-1\over \epsilon}}^\epsilon)-P_{T-t_{k-1}} \bar \L  f(y_{t_{k-1\over \epsilon}}^\epsilon)\right|
\le \epsilon c_1(T)\left( 1+\lambda_{q_1}(V(y_0))\right).
\end{equation} 

If $f\in BC^2$, $\L f\in BC^2$. By Lemma \ref{semigroup-estimate-2} ,
 $$|P_{T-t_{k}}\bar \L f(y_0)-P_{T-t_{k-1}} \bar \L  f  (y_0)| \le
\Delta t_k \tilde c_1(T)\left( 1+\lambda_{q_1}(V(y_0))\right).$$
 there exist constant $C(T)$ and a function $\gamma_1 \in B_{V,0}$, independent  of $f$, s.t.
$$|P_tf(y_0)-P_sf (y_0)| \le (t-s) \left(1+|f|_{2,\infty}\right)\gamma_1(y_0).$$
Here $\gamma_1\in B_{V,0}$. Thus for $f\in BC^2$, 
\begin{equation}
\label{rate-2-2}
\sum_{k=1}^N \Delta t_k  \left|  P_{T-t_{k}} \bar \L  f (y_{t_{k-1\over \epsilon}}^\epsilon)-P_{T-t_{k-1}} \bar \L  f(y_{t_{k-1\over \epsilon}}^\epsilon)\right|
\le 2\epsilon |f|_{2,\infty}( 1+\gamma_1(y_0)).
\end{equation}
Finally instead of estimating  $I_k^\epsilon$, we estimate
\begin{equation*}
\label{estimate-eq-10}
D_k^\epsilon:=P_{T-t_k}f(y_{t_k \over \epsilon}^\epsilon)
-   P_{T-t_{k}} f(y_{t_{k-1 \over \epsilon}}^\epsilon) +
\Delta t_k P_{T-t_{k}} \bar \L  f (y_{t_{k-1\over \epsilon}}^\epsilon).
\end{equation*}

{\it Step 3.} 
If $f \in B_{V,4}$, by Theorem \ref{limit-thm}, $P_tf\in B_{V,4}$ for any $t$.
Since $\alpha_k\in N^\perp\cap C^3$, we may apply Lemma \ref{lemma5} to $P_{T-t_k}f$
and obtain the following formula for $D_k^\epsilon$.
\begin{equation*}
{\begin{split} 
& D_k^\epsilon=P_{T-t_k}f(y^\epsilon_{{t_k}\over \epsilon})
- P_{T-t_k}f(y^\epsilon_{t_{k-1}\over \epsilon}) + \Delta t_k P_{T-t_{k}}\bar \L f(y_{t_{k-1\over \epsilon}}^\epsilon)\\
&=\epsilon \sum_{j=1}^m \left(  dP_{T-t_k}f(Y_j(y^\epsilon_{{t_k}\over \epsilon} ) )\beta_j(z^\epsilon_{{t_k}\over \epsilon})
 - dP_{T-t_k}f(Y_j(y^\epsilon_{t_{k-1}\over \epsilon} ))\beta_j( z^\epsilon_{t_{k-1}\over \epsilon})\right)\\
&+ \Delta t_k P_{T-t_{k}}\bar \L f(y_{t_{k-1\over \epsilon}}^\epsilon) 
-\epsilon \sum_{i,j=1}^m\int_{t_{k-1}\over \epsilon}^{{t_k}\over \epsilon}
 \left( L_{Y_i}L_{Y_j} P_{T-t_k}f(y^\epsilon_r)\right)
\alpha_i(z^\epsilon_r)\;\beta_j(z^\epsilon_r)\; dr\\
&-  \sqrt \epsilon  \sum_{j=1}^m\sum_{k=1}^{m'}
\int_{s\over \epsilon}^{t\over \epsilon}  d P_{T-t_k}f( Y_j(y^\epsilon_r)) \;
d\beta_j( X_k(z^\epsilon_r)) \;dW_r^k.
\end{split}}\end{equation*}

Since $Y_0, Y_k\in B_{V,0}$, $L_{Y_i} L_{Y_j} P_{T-t_k}f\in B_{V,0}$, which follows the same argument as for
Lemma \ref{Lf-L2f}. In particular,  for each $0<\epsilon\le \epsilon_0$,
$$\int_0^{t\over \epsilon}\E \left(\left| L_{Y_i} L_{Y_j} P_{T-t_k}f(y_r^\epsilon)\right|\right)^2 dr <\infty.$$
The expectation of the martingale term in the above formula vanishes.
For $j=1,\dots, m$ and $k=1, \dots, N$, let
\begin{equation*}
{\begin{split}
A_{jk}^\epsilon &= dP_{T-t_k}f \left(  Y_j(y^\epsilon_{t_k\over \epsilon} )\right)
\beta_j(z^\epsilon_{ t_k \over \epsilon})
- dP_{T-t_k}f\left(Y_j(y^\epsilon_{t_{k-1}\over \epsilon} )\right)
 \beta_j( z^\epsilon_{t_{k-1}\over \epsilon}),\\
 B_{k}^\epsilon&=\Delta t_k (P_{T-t_{k}}\bar \L f)(y_{t_{k-1\over \epsilon}}^\epsilon)-\epsilon \sum_{i,j=1}^m\int_{t_{k-1}\over \epsilon}^{{t_k}\over \epsilon}
\left( L_{Y_i}L_{Y_j} P_{T-t_k}f\right)(y^\epsilon_r)
\alpha_i(z^\epsilon_r)\;\beta_j(z^\epsilon_r) \; dr.
\end{split}}
\end{equation*}

{\it Step 4. }
We recall that $\bar \L P_{T-t_k}f=\sum_{i,j=1}^m \overline{\alpha_i\beta_j} L_{Y_i}L_{Y_j}P_{T-t_k}f$.
By Theorem  \ref{limit-thm}, $L_{Y_i} L_{Y_j} P_{T-t_k}f$ is $C^2$.
 Furthermore by Assumption \ref{assumption1}, the $(z_t^\epsilon)$ diffusion has exponential mixing rate.
  We apply Corollary  \ref{corollary-to-lemma3-2} to each function of the form $L_{Y_i} L_{Y_j} P_{T-t_k}f$ and take $h=\alpha_i\beta_j$ There exist a constant $\tilde c$ and 
  a function $\gamma_{i,j,,k,\epsilon}\in B_{V,0}$
 such that 
 $${\begin{split} 
 & |B_k^\epsilon|
  \le {\Delta t_k}\sum_{i,j=1}^m
 \left|\overline{\alpha_i\beta_j}\;   L_{Y_i} L_{Y_j} P_{T-t_k}f \left(y_{t_{k-1}\over \epsilon}^\epsilon\right)- {\epsilon \over \Delta t_k} \int_{t_{k-1}\over \epsilon }^{t_k\over \epsilon} \E \left\{ L_{Y_i} L_{Y_j} P_{T-t_k}f (y_r^\epsilon) (\alpha_i\beta_j)(z_{r}^\epsilon) \big| \F_{t_{k-1}\over \epsilon}\right\} dr
 \right| \\
& \le  \sum_{i,j=1}^m\tilde c  |\alpha_i\beta_j|_\infty\gamma_{i,j, k,\epsilon}(y_{t_{k-1}\over \epsilon}^\epsilon) \left ({\epsilon^2}+ (\Delta t_k)^2\right),
\end{split}}
$$
where denoting $G^k_{i,j}:=L_{Y_i} L_{Y_j} P_{T-t_k}f$,
$$\gamma_{i,j,k, \epsilon}=|G^k_{i,j}| + \sum_{l'=1}^m |L_{Y_{l'}}G^k_{i,j}|+\sum_{l,l'=1}^m
{\epsilon\over \Delta t_k}  \int_{t_{k-1}\over \epsilon}^{t_k\over \epsilon} 
  \E  \left\{  \left|L_{Y_l} L_{Y_{l'}}G^k_{i,j}(y^\epsilon_r)\right | \; \big|\;\F_{s\over \epsilon} \right\}  dr.$$
By Theorem \ref{limit-thm}, $G^k_{i,j}=L_{Y_i} L_{Y_j} P_{T-t_k}f$ belong to $B_{V,2}$. Furthermore
$G^k_{i,j}$ and its first two derivatives are bounded by a function in $B_{V,0}$ which depends on $f$ only through 
$\sum_{k=0}^4 P_{T-t_k} ( |\nabla^{(k)}d f|^p)$, for some $p$.
Thus there are numbers $c ,q$ such that 
for all $k$, $\max_{i,j}|\gamma_{i,j,k, \epsilon}|\le c+cV^q$, for some $c,q$.
 Since $\Delta t_k\le \epsilon\le 1$,  $N\sim O({1\over \epsilon})$, we summing over $k$, 
\begin{equation}
\label{rate-3}
\sum_{k=1}^N \E |B_k^\epsilon|
 \le 2\epsilon \cdot c\cdot  \tilde c \sum_{i,j=1}^m  |\alpha_i\beta_j|_\infty 
   C_q(T) \sup_k  \E \left (1+V^q (y_{t_{k-1}\over \epsilon}^\epsilon) \right)
   \le \epsilon C(T)\tilde  \gamma(y_0),
\end{equation}
for some constant $C(T)$ and some function $\tilde \gamma$ in $B_{V,0}$.
If $f\in BC^4$, it is easy to see that there is a function $g\in B_{V,0}$, not depending on $f$, s.t.
$\max_{i,j,k}\E \gamma_{i,j, k,\epsilon}(y_{t_{k-1}\over \epsilon}^\epsilon) 
\le C(T)g(y_0)|f|_{4,\infty}$.

{\it Step 5.} 
Finally, by Lemma \ref{gap} below, for  $ \epsilon\le s\le t\le T$ and $f\in  B_{V,3}$, there is a constant $C$ and function $\tilde \gamma\in B_{V,0}$, depending on $T,f$
   s.t. for $0\le s <t \le T$,
\begin{equation}
\label{gap-0}
\left|  \sum_{j=1}^m \E df(Y_j(y^\epsilon_{t\over \epsilon} ) )\beta_j(z^\epsilon_{t\over \epsilon})
 -\E df(Y_j(y^\epsilon_{s\over \epsilon} ))   \beta_j( z^\epsilon_{s\over \epsilon}) \right|
 \le C\gamma(y_0)\epsilon \sqrt{|\log \epsilon|}+C\gamma(y_0) (t-s).
\end{equation}
For the partition $t_0<t_1<\dots<t_N$, we assumed that $t_1-t_0\le \epsilon$ and $\Delta t_k=\epsilon$ for $k\ge 1$.
Let $k\ge 2$. Since $dP_{T-t_k}f(Y_j)\in B_{V,3}$,  estimate (\ref{gap-0}) holds also with
$f$ replaced by  $dP_{T-t_k}f(Y_j)$, and we have:
\begin{equation}\label{final-estimate}
\left| \sum_{j=1}^m  \epsilon \E A_{jk}^\epsilon \right|\le C\tilde \gamma(y_0)\epsilon^2  \sqrt{|\log \epsilon|}, \quad k\ge 2
\end{equation}
Since $\beta_j$ are bounded and by Theorem \ref{limit-thm} $dP_{T-t_k}f$ is bounded by a function in $B_{V,0}$ that 
does not depend on $k$, for $\epsilon\le \epsilon_0$, each term 
$\E|A_{jk}^\epsilon|$ is bounded by a function in $B_{V,0}$ and $ \sup_{0<\epsilon \le \epsilon_0}|\E A_{jk}^\epsilon|$ is of order $\epsilon \tilde \gamma(y_0)$ for some function $\tilde \gamma\in B_{V,0}$. We ignore a finite number of terms in the summation. In particular we will not need to worry about  the terms with $k=1$. Since the sum over $k$ involves $O({1\over \epsilon})$ terms the following bound follows from (\ref{final-estimate}):
\begin{equation}
\label{rate-4}
\sum_{k=1}^N \left|\sum_{j=1}^m \epsilon \E A_{jk}^\epsilon\right|\le  C\tilde \gamma(y_0)\epsilon \sqrt{|\log \epsilon|}.
\end{equation}
Here $\tilde \gamma\in B_{V,0}$ and may depend on $f$.
The case of $f\in BC^4$ can be treated similarly. The estimate is of the form $\tilde \gamma(\epsilon)=(1+|f|_{4,\infty})\gamma_0$ where $ \gamma_0\in B_{V,0}$ does not depend on $f$.
We putting together (\ref{rate-1}), (\ref{rate-2}), (\ref{rate-3}) and (\ref{rate-4})to see that if $f \in B_{V,4}$, 
$$\left|\E f\left(\Phi^\epsilon_{t\over \epsilon}(y_0)\right) -P_tf(y_0)\right|\le C(T) \gamma(y_0)\epsilon  \sqrt{|\log \epsilon|},$$
where  $\gamma\in B_{V,0}$. If $f\in BC^4$, collecting the estimates together, we see that there is a constant $C(T)$ s.t.
 $$\left|\E f\left(\Phi^\epsilon_{t\over \epsilon}(y_0)\right) -P_tf(y_0)\right|
 \le\epsilon  \sqrt{|\log \epsilon|}\, C(T)\left(1+ \sum_{k=1}^4 |\nabla^{(k-1)}df|_\infty\right) \tilde\gamma(y_0)$$
where $\tilde \gamma$ is a function in $B_{V,0}$ that does not depend on $f$. 
By induction the finite dimensional distributions converge and hence the required weak convergence.
The proof is complete.
\end{proof}

\begin{lemma} 
\label{lemma4.2}
Assume that (\ref{sde-3}) are complete for all $\epsilon \in (0, \epsilon_0)$, some $\epsilon_0>0$.
\begin{enumerate}
\item[(1)] $\L_0$ is a regularity improving Fredholm operator on a compact manifold $G$,  $\alpha_k\in C^3\cap N^\perp$. 
\item [(2)] There exists $V\in C^2(M; \R_+)$, constants $c, K$, s.t. 
$$\sum_{j=1}^m |L_{Y_j} V| \le c+KV, \quad \sum_{j=1}^m |L_{Y_i}L_{Y_j}V| \le c+KV.$$

\item [(2')] There exists a locally bounded $V:M\to \R_+$ such that 
for all $q\ge 2$ and $t>0$ there are constants $C(t)$ and $q'$,  with the property that
\begin{equation}
\sup_{s\le u\le t}\E \left\{ \left(V(y_u^\epsilon)\right)^q \; \big|\; \F_{s\over \epsilon}\right \}\le C V^{q'}(y^\epsilon_{s\over \epsilon})+C.
\end{equation}

\item [(3)] For $V$ in part (2) or in part (2'), $ \sup_{\epsilon} \E V^q(y_0^\epsilon)<\infty$ for all $q\ge 2$.
\end{enumerate}
For $f\in C^2$ with the property that $L_{Y_j} f, L_{Y_i}L_{Y_j}f\in B_{V,0}$ for all $i,j$, there exists a number 
$\epsilon_0>0$ s.t.
for every $0<\epsilon\le\epsilon_0$,
$$\left|\E\left \{ f(y_{t\over \epsilon}^\epsilon) \; \big|\; \F_{s\over \epsilon}\right \}- f(y_{s\over \epsilon}^\epsilon)\right|
\le \gamma_1(y_{s\over \epsilon}^\epsilon)\max_j |\beta_j|_\infty \; \epsilon+(t-s) \gamma_2(y_{s\over \epsilon}^\epsilon)\max_i|\alpha_i|_\infty\max_j |\beta_j|_\infty.$$
Here $\gamma_1, \gamma_2\in B_{V,0}$ and depend on $|f|$ only through  $|L_{Y_j}f|$ and $|L_{Y_j}L_{Y_i}f|$. In particular there exists $\gamma \in B_{V,0}$ s.t. for all $0<\epsilon\le\epsilon_0$,
$$\left| \E f(y_{t\over \epsilon}^\epsilon)-\E f(y_{s\over \epsilon}^\epsilon)\right|
\le \sup_{0<\epsilon\le\epsilon_0} \E\gamma(y_0^\epsilon)(t-s+\epsilon).$$
Furthermore, $\sup_{0<\epsilon\le\epsilon_0}\E\left|f(y_{t\over \epsilon}^\epsilon)-f(y_{s\over \epsilon}^\epsilon)\right|\le  (\epsilon +\sqrt{t-s}))\E \gamma(y_0^\epsilon)$.
\end{lemma}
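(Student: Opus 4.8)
\textbf{Proof plan for Lemma \ref{lemma4.2}.}

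The plan is to derive the main inequality from the reduction formula \eqref{Ito-tight-2} of Lemma \ref{lemma5}, applied to $f$ itself, and then control each resulting term with the estimates already established. First I would note that under either hypothesis (2) or (2') together with (3), the completeness assumption and Theorem \ref{uniform-estimates} (or the hypothesis (2') directly) guarantee the uniform conditional moment bound \eqref{moment-assumption-20} of Corollary \ref{corollary-to-lemma3-2}; this is what legitimises all the conditional expectations below and, crucially, the finiteness of $\int_{s/\epsilon}^{t/\epsilon}\E|df(Y_j(y^\epsilon_r))|^2\,dr$ needed to invoke \eqref{Ito-tight-2}. Since $\L_0$ is a regularity improving Fredholm operator and $\alpha_k\in C^3\cap N^\perp$, the solutions $\beta_j=\L_0^{-1}\alpha_j$ exist, are $C^3$, and (as $G$ is compact) are bounded; so Lemma \ref{lemma5} applies to the $C^2$ function $f$.

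Next I would write, using \eqref{Ito-tight-2},
\begin{equation*}
{\begin{split}
\left|\E\left\{f(y^\epsilon_{t\over\epsilon})\mid\F_{s\over\epsilon}\right\}-f(y^\epsilon_{s\over\epsilon})\right|
\le{}&\epsilon\sum_{j=1}^m\left(\left|\E\left\{df(Y_j(y^\epsilon_{t\over\epsilon}))\beta_j(z^\epsilon_{t\over\epsilon})\mid\F_{s\over\epsilon}\right\}\right|+\left|df(Y_j(y^\epsilon_{s\over\epsilon}))\beta_j(z^\epsilon_{s\over\epsilon})\right|\right)\\
&+\epsilon\sum_{i,j=1}^m\left|\int_{s\over\epsilon}^{t\over\epsilon}\E\left\{L_{Y_i}L_{Y_j}f(y^\epsilon_r)\,\alpha_i(z^\epsilon_r)\,\beta_j(z^\epsilon_r)\mid\F_{s\over\epsilon}\right\}dr\right|.
\end{split}}
\end{equation*}
The two boundary terms are each bounded by $\epsilon\max_j|\beta_j|_\infty$ times a conditional expectation of $|L_{Y_j}f|$, which lies in $B_{V,0}$ by assumption; using \eqref{moment-assumption-20} this gives a contribution of the form $\gamma_1(y^\epsilon_{s/\epsilon})\max_j|\beta_j|_\infty\,\epsilon$ with $\gamma_1\in B_{V,0}$ depending on $f$ only through $|L_{Y_j}f|$. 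For the integral term, since the length of the interval $[s/\epsilon,t/\epsilon]$ is $(t-s)/\epsilon$, I would bound the integrand in absolute value by $\max_i|\alpha_i|_\infty\max_j|\beta_j|_\infty$ times $\E\{|L_{Y_i}L_{Y_j}f|(y^\epsilon_r)\mid\F_{s/\epsilon}\}$, which is again controlled by \eqref{moment-assumption-20} because $L_{Y_i}L_{Y_j}f\in B_{V,0}$; the factor $\epsilon$ in front cancels the $1/\epsilon$ from the interval length, leaving $(t-s)\gamma_2(y^\epsilon_{s/\epsilon})\max_i|\alpha_i|_\infty\max_j|\beta_j|_\infty$ with $\gamma_2\in B_{V,0}$. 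This establishes the first displayed inequality. The second one follows by taking unconditional expectations, using the tower property, then bounding $\E\gamma_i(y^\epsilon_{s/\epsilon})\le\sup_{\epsilon\le\epsilon_0}\E\gamma(y_0^\epsilon)$ via \eqref{moment-assumption-20} once more, and setting $\gamma$ to dominate $\gamma_1$ and $\gamma_2$ together with the constant factors $\max_j|\beta_j|_\infty$, $\max_i|\alpha_i|_\infty$ absorbed in.

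For the final $L^1$ statement I would proceed differently, since $|\E f(y^\epsilon_{t/\epsilon})-\E f(y^\epsilon_{s/\epsilon})|$ must be replaced by $\E|f(y^\epsilon_{t/\epsilon})-f(y^\epsilon_{s/\epsilon})|$, where the martingale part of the reduction formula no longer disappears. Here I would use \eqref{Ito-tight} applied to $f$ over $[s/\epsilon,t/\epsilon]$: the boundary and finite-variation terms are handled exactly as above and contribute $O(\epsilon)+O(t-s)$ in $L^1$, while the stochastic integral $\sqrt\epsilon\sum_{j,k}\int_{s/\epsilon}^{t/\epsilon}df(Y_j(y^\epsilon_r))\,d\beta_j(X_k(z^\epsilon_r))\,dW_r^k$ is estimated in $L^2$ by Itô's isometry: its second moment is $\epsilon\sum_{j,k}\int_{s/\epsilon}^{t/\epsilon}\E|df(Y_j(y^\epsilon_r))|^2|d\beta_j(X_k(z^\epsilon_r))|^2\,dr$, which since $|d\beta_j(X_k)|$ is bounded on the compact $G$ and $|df(Y_j)|^2$ has uniformly bounded expectation is $O\bigl(\epsilon\cdot\frac{t-s}{\epsilon}\bigr)=O(t-s)$; the Cauchy--Schwarz passage from $L^2$ to $L^1$ then yields the $\sqrt{t-s}$ term. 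I expect the main (minor) obstacle to be bookkeeping: making sure the localisation argument for applying Itô's formula and \eqref{Ito-tight-2} is carried out cleanly, that all the conditional moment bounds of \eqref{moment-assumption-20} are available under \emph{both} hypothesis sets (2)+(3) and (2')+(3), and that the $B_{V,0}$-membership and $f$-dependence of the final $\gamma_i$ is tracked honestly through the chain of estimates. None of these steps is deep; the substance is entirely in Lemma \ref{lemma5} and Corollary \ref{corollary-to-lemma3-2}, which do the real work.
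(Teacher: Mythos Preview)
Your proposal is correct and follows essentially the same route as the paper's own proof: apply the reduction formula \eqref{Ito-tight-2} from Lemma~\ref{lemma5} to $f$, bound the two boundary terms to produce the $\epsilon\,\gamma_1$ contribution and the time-integral term to produce the $(t-s)\,\gamma_2$ contribution (both via the conditional moment control coming from (2)/(2') and Theorem~\ref{uniform-estimates}), then for the $L^1$ increment revert to \eqref{Ito-tight} and estimate the stochastic integral in $L^2$ to get the $\sqrt{t-s}$. The paper's argument is slightly terser but identical in substance, defining $\gamma_1,\gamma_2$ exactly as the conditional suprema of $\sum_j|L_{Y_j}f|$ and $\sum_{i,j}|L_{Y_i}L_{Y_j}f|$ that you describe.
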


\begin{proof}
Since the hypothesis of Theorem \ref{uniform-estimates} holds, if $V$ is as defined in (2), it satisfies (2').
Since $L_{Y_j}f\in B_{V,0}$,  $\sup_{s\le t}\E|L_{Y_j}f(y_{s\over \epsilon}^\epsilon)|  ^2$ is finite. We apply Lemma \ref{Ito-tight}:
 \begin{equation*}
{\begin{split} 
\E\left\{ f(y^\epsilon_{t\over \epsilon}) \; \big|\; \F_{s\over \epsilon}\right \}
=&\ f(y^\epsilon_{s\over \epsilon}) 
+ \epsilon \sum_{j=1}^m \E\left\{ \left( df(Y_j(y^\epsilon_{t\over \epsilon} ) )\beta_j(z^\epsilon_{t\over \epsilon})
 -df(Y_j(y^\epsilon_{s\over \epsilon} ))\beta_j( z^\epsilon_{s\over \epsilon})\right) \; \big|\; \F_{s\over \epsilon}\right \}\\
&-\epsilon \sum_{i,j=1}^m \E\left\{ \int_{s\over \epsilon}^{t\over \epsilon} L_{Y_i}L_{Y_j} f(y^\epsilon_r))
\alpha_i(z^\epsilon_r)\;\beta_j(z^\epsilon_r) \;dr  \; \big|\; \F_{s\over \epsilon}\right \}.
\end{split}}\end{equation*}
Let $$\gamma_1(y_{s\over \epsilon}^\epsilon)=2 \sup_{s\le r\le t} \sum_{j=1}^m\E\left\{ |L_{Y_j}f(y_{r\over \epsilon}^\epsilon)| \; \big|\; \F_{s\over \epsilon}\right \}, \quad
 \gamma_2(y_{s\over \epsilon}^\epsilon)
= \sup_{s\le r\le t} \sum_{i,j=1}^m\E\left\{ |L_{Y_i}L_{Y_j} f(y^\epsilon_{s\over \epsilon}))| \; \big|\; \F_{s\over \epsilon}\right \}.$$ Since  $L_{Y_j}f$ and $ L_{Y_i}L_{Y_j} f\in B_{V,0}$,  $\gamma_1, \gamma_2\in B_{V,0}$.
 The required conclusion follows for there conditioned inequality, and hence the estimate for
 $\left| \E f(y_{t\over \epsilon}^\epsilon)-\E f(y_{s\over \epsilon}^\epsilon)\right|$. 
To estimate $ \E\left| f(y_{t\over \epsilon}^\epsilon)-f(y_{s\over \epsilon}^\epsilon)\right|$,
 we need to involve the diffusion term in (\ref{Ito-tight}) and hence $\sqrt{t-s}$ appears.


%
\end{proof}

\begin{lemma}
\label{gap}

Assume the conditions of  Lemma \ref{lemma4.2} and Assumption \ref{assumption1}.
 Let $y_0^\epsilon=y_0$.
If $f\in C^3$ is s.t. $|L_{Y_j}f|$, $|L_{Y_i}L_{Y_j}f|$, $|L_{Y_l}L_{Y_i}L_{Y_j}f|$ belong to $B_{V,0}$ for all $i,j,k$,  then for some $\epsilon_0$ and all $0<\epsilon \le \epsilon_0$ and for all $0\le \epsilon \le s<t\le T$ where $T>0$,
 $$\left|  \sum_{l=1}^m \E df(Y_l(y^\epsilon_{t\over \epsilon} ) )\beta_l(z^\epsilon_{t\over \epsilon})
 -\E df(Y_l(y^\epsilon_{s\over \epsilon} ))   \beta_l( z^\epsilon_{s\over \epsilon}) \right|
 \le C(T) \gamma(y_0)\epsilon \sqrt{|\log \epsilon|}+C(T) \gamma(y_0) (t-s),$$
 where $\gamma\in B_{V,0}$ and $C(T)$ is a constant. 
If the assumptions of Theorem \ref{rate} holds, the above estimate holds for any
 $f\in B_{V,3}$; if $f\in BC^3$, we may take $\gamma=(|f|_{3,\infty}+1)\tilde \gamma $ where $\tilde \gamma\in B_{V,0}$.
\end{lemma}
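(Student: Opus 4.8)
The plan is to peel off the invariant average of each corrector $\beta_l$ and to treat a ``mean part'' and a ``centred part'' separately. Since Assumption \ref{assumption1} gives a unique invariant probability measure $\pi$, set $\overline{\beta_l}=\int_G\beta_l\,d\pi$ and write $\beta_l=\overline{\beta_l}+(\beta_l-\overline{\beta_l})$. The mean part contributes $\sum_l\overline{\beta_l}\big(\E\,L_{Y_l}f(y^\epsilon_{t/\epsilon})-\E\,L_{Y_l}f(y^\epsilon_{s/\epsilon})\big)$. Because $f\in C^3$, each $L_{Y_l}f$ is $C^2$ with $L_{Y_j}L_{Y_l}f$ and $L_{Y_i}L_{Y_j}L_{Y_l}f$ in $B_{V,0}$, so Lemma \ref{lemma4.2} applies with $f$ replaced by $L_{Y_l}f$ and bounds this difference by $C(T)\gamma(y_0)(t-s+\epsilon)$ with $\gamma\in B_{V,0}$; the $\epsilon$ there is absorbed into $\epsilon\sqrt{|\log\epsilon|}$, and the $(t-s)$ there is exactly the $(t-s)$ appearing in the lemma. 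Under the hypotheses of Theorem \ref{rate} (respectively for $f\in BC^3$) the $B_{V,0}$-control of the constants is inherited from Theorem \ref{limit-thm}, with $\gamma$ of the form $(1+|f|_{3,\infty})\tilde\gamma$ in the bounded case.

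It then remains to bound the centred part, and here it suffices to show that for each $r\in\{s,t\}$ one has $\sum_l\big|\E\big[L_{Y_l}f(y^\epsilon_{r/\epsilon})(\beta_l-\overline{\beta_l})(z^\epsilon_{r/\epsilon})\big]\big|\le C(T)\gamma(y_0)\,\epsilon\sqrt{|\log\epsilon|}$; the increment of the centred part is then at most twice this. Fix $r\ge\epsilon$ and look a logarithmic number of mixing times of $z^\epsilon$ into the past: put $\ell_\epsilon=\tfrac{2}{\delta}\epsilon|\log\epsilon|$, which is the real-time length over which the $\L_0^\epsilon$-diffusion contracts by a factor $e^{-\delta\ell_\epsilon/\epsilon}=\epsilon^{2}$, and condition on $\F_{(r/\epsilon)-\ell_\epsilon}$ — this is legitimate since $r\ge\epsilon$ forces $r/\epsilon\ge 1>\ell_\epsilon$ for $\epsilon$ small. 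Writing $\rho_0=(r/\epsilon)-\ell_\epsilon$ and using the ODE (\ref{sde-3}) for the function $L_{Y_l}f$, namely $L_{Y_l}f(y^\epsilon_{r/\epsilon})=L_{Y_l}f(y^\epsilon_{\rho_0})+\int_{\rho_0}^{r/\epsilon}\sum_kL_{Y_k}L_{Y_l}f(y^\epsilon_\rho)\,\alpha_k(z^\epsilon_\rho)\,d\rho$, the centred expectation splits into a frozen term and an increment term. For the frozen term, $\E[(\beta_l-\overline{\beta_l})(z^\epsilon_{r/\epsilon})\mid\F_{\rho_0}]=\big(Q_{\ell_\epsilon/\epsilon}(\beta_l-\overline{\beta_l})\big)(z^\epsilon_{\rho_0})$, which by Assumption \ref{assumption1} and $\overline{\beta_l-\overline{\beta_l}}=0$ has sup-norm $\le a\,\epsilon^{2}\,|\beta_l-\overline{\beta_l}|_\infty$; multiplying by the first moment of $|L_{Y_l}f(y^\epsilon_{\rho_0})|$, which is $B_{V,0}$-controlled by the conditional moment hypothesis of Lemma \ref{lemma4.2} (or Theorem \ref{uniform-estimates}), gives an $O(\epsilon^{2}\gamma(y_0))$ contribution.

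The increment term is the crux and the place where the $\sqrt{|\log\epsilon|}$ is produced. I would bound it by Cauchy--Schwarz by $\Big(\E\big(\int_{\rho_0}^{r/\epsilon}\sum_kL_{Y_k}L_{Y_l}f(y^\epsilon_\rho)\alpha_k(z^\epsilon_\rho)\,d\rho\big)^{2}\Big)^{1/2}\,|\beta_l-\overline{\beta_l}|_\infty$, and estimate the first factor by freezing $L_{Y_k}L_{Y_l}f(y^\epsilon_\rho)$ at $\rho_0$ (the correction is $O(\ell_\epsilon)$ pointwise, hence contributes $O(\gamma\ell_\epsilon^{4})$ to the variance), so that the main contribution is $\E\big[|L_{Y_k}L_{Y_l}f(y^\epsilon_{\rho_0})|^{2}\,\E\big[(\int_{\rho_0}^{r/\epsilon}\alpha_k(z^\epsilon_\rho)\,d\rho)^{2}\mid\F_{\rho_0}\big]\big]$. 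The inner conditional second moment is an iterated fast correlation integral over a window of slow-time length $\epsilon\ell_\epsilon$, and by part (3) of Lemma \ref{lemma2} (using $\overline{\alpha_k}=0$, which kills both the ``diagonal'' and the ``product'' pieces), it is $\le C_1(z^\epsilon_{\rho_0})\epsilon^{2}+C_2(z^\epsilon_{\rho_0})\,\epsilon\ell_\epsilon$; on the compact manifold $G$ the $C_i$ are bounded, so the inner moment is $O(\epsilon\ell_\epsilon)=O(\epsilon^{2}|\log\epsilon|)$. Thus the variance is $\le C\gamma(y_0)\,\epsilon^{2}|\log\epsilon|$ and its square root is $\le C\gamma'(y_0)\,\epsilon\sqrt{|\log\epsilon|}$; summing over $k$ and $l$ and combining with the mean part and the frozen term yields the stated estimate, with the $B_{V,3}$ and $BC^3$ refinements obtained as in the first paragraph via Theorem \ref{limit-thm}. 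The step I expect to be the real obstacle is precisely this variance estimate: getting the ``diffusive'' scaling $\epsilon\ell_\epsilon$ (linear in the window length) rather than $\ell_\epsilon^{2}$ genuinely requires the exponential mixing and mean-zero of $\alpha_k$ as encoded in Lemma \ref{lemma2}, and the choice $\ell_\epsilon\asymp\epsilon|\log\epsilon|$ is what balances the mixing gain $e^{-\delta\ell_\epsilon/\epsilon}$ against the fluctuation $\epsilon\sqrt{\ell_\epsilon/\epsilon}$; a secondary bookkeeping burden is keeping every constant $B_{V,0}$-controlled in $y_0$ uniformly in $\epsilon$, which rests on the conditional moment estimates of Lemma \ref{lemma4.2}(2') / Theorem \ref{uniform-estimates} and, for the controlled-function versions, on Theorem \ref{limit-thm}.
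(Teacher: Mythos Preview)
Your proof is correct and reaches the right bound, but your route to the $\epsilon\sqrt{|\log\epsilon|}$ differs from the paper's. Both approaches split off $\bar\beta_l$ (the paper does this implicitly, remarking that subtracting $\bar\beta_l$ does not change $I_l$ and invoking Lemma~\ref{lemma4.2} for the mean part), and this is indeed the source of the $(t-s)$ term. The difference is in the centred part. The paper does \emph{not} bound each endpoint: it telescopes the increment as $I_l+II_l$, where $I_l=df(Y_l(y^\epsilon_{s/\epsilon}))\big(\beta_l(z^\epsilon_{t/\epsilon})-\beta_l(z^\epsilon_{s/\epsilon})\big)$ carries the fast increment and $II_l$ carries the slow one. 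For $I_l$ it shifts $s$ to $s'=s-\tfrac{1}{\delta}\epsilon^2|\log\epsilon|$, then uses the Poisson relation $\L_0\beta_l=\alpha_l$ to write $\E\{\beta_l(z^\epsilon_{t/\epsilon})-\beta_l(z^\epsilon_{s/\epsilon})\mid\F_{s'/\epsilon}\}=\tfrac{1}{\epsilon}\int_{s/\epsilon}^{t/\epsilon}\E\{\alpha_l(z^\epsilon_\rho)\mid\F_{s'/\epsilon}\}\,d\rho$ and invoke exponential mixing of $\alpha_l$; the freezing correction $df(Y_l(y^\epsilon_{s/\epsilon}))-df(Y_l(y^\epsilon_{s'/\epsilon}))$ is handled by the ready-made $L^1$ H\"older estimate in the \emph{last line} of Lemma~\ref{lemma4.2}, $\E|L_{Y_l}f(y^\epsilon_{t/\epsilon})-L_{Y_l}f(y^\epsilon_{s/\epsilon})|\le(\epsilon+\sqrt{t-s})\gamma(y_0)$, which already encodes the diffusive scaling via the martingale in (\ref{Ito-tight}). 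The term $II_l$ is handled symmetrically with a look-back from $t$ to $t'$.

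What you do instead is bound $|\E[L_{Y_l}f(y^\epsilon_{r/\epsilon})(\beta_l-\bar\beta_l)(z^\epsilon_{r/\epsilon})]|$ for each $r$, and in place of quoting the $L^1$ H\"older estimate you redo the diffusive scaling from scratch: expand the slow increment via the ODE, apply Cauchy--Schwarz, and then use Lemma~\ref{lemma2}(3) to bound the conditional second moment of $\int\alpha_k(z^\epsilon_\rho)\,d\rho$ over a window of slow length $\epsilon\ell_\epsilon$. This is a legitimate and self-contained alternative --- it bypasses the Poisson-equation representation for the $y^\epsilon$ increments and works directly with the mixing of $z^\epsilon$ --- and it even yields the slightly stronger pointwise-in-$r$ statement. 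The paper's route is shorter because the $\sqrt{t-s}$ scaling has already been packaged into Lemma~\ref{lemma4.2}; your variance computation essentially re-derives that scaling by a different mechanism.
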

\begin{proof}
Let $t\le T$.  Since $\beta_l(z^\epsilon_{t\over \epsilon})$ is the highly oscillating term, we expect that averaging in the oscillation in $\beta_l$  gains an $\epsilon$
in the estimation.
We first split the sums:
\begin{equation}
\label{7.4-1}{ \begin{split}
& \left(  df(Y_l(y^\epsilon_{t\over \epsilon} ) )\beta_l(z^\epsilon_{t\over \epsilon})\right)
 -\left(  df(Y_l(y^\epsilon_{s\over \epsilon} ))\beta_l( z^\epsilon_{s\over \epsilon}) \right)\\
 &=    df(Y_l(y^\epsilon_{s\over \epsilon} ) ) 
\left( \beta_l(z^\epsilon_{t\over \epsilon})- \beta_l(z^\epsilon_{s\over \epsilon})\right) 
+  \left(  df(Y_l(y^\epsilon_{t\over \epsilon} ) )- df(Y_l(y^\epsilon_{s\over \epsilon} ) )\right) \beta_l(z^\epsilon_{t\over \epsilon})=I_l+II_l.
 \end{split} } 
 \end{equation}
By Assumption \ref{assumption1}, $\L_0$ has mixing rate $\psi(r)=ae^{-{\delta r}}$.
 Let $s'<s\le t$,
   $${ \begin{split} &\left|\E  df(Y_l(y^\epsilon_{s'\over \epsilon} ) ) 
\left( \beta_l(z^\epsilon_{t\over \epsilon})- \beta_l(z^\epsilon_{s\over \epsilon})\right)\right|
\le \E  \left(\left|  df\left(Y_l(y^\epsilon_{s'\over \epsilon} )\right ) \right| \cdot 
\left|  {1\over \epsilon} \int_{s\over \epsilon}^{t\over \epsilon}  \E \left\{\alpha_l (z_r^\epsilon)
 \big | \F_{s'\over \epsilon} \right\}\d r \right|\right)\\
&\le  \E  \left|   df\left(Y_l(y^\epsilon_{s'\over \epsilon} )\right)\right |
{1\over \epsilon}  \int_{0}^{t-s\over \epsilon} \psi\left({r+{s-s'\over \epsilon} \over \epsilon}\right) dr\\
&\le  {a^2\over \delta} e^{-{\delta(s-s')\over \epsilon^2}} \; \E  \left|   df\left(Y_l(y^\epsilon_{s'\over \epsilon} )\right)\right|.
 \end{split} } $$
 If $s-s'=\delta_0 \epsilon^2|\log\epsilon|$,
 $\exp\left(-{\delta(s-s')\over \epsilon^2}\right)=\epsilon^{\delta\delta_0}$.
We apply Theorem \ref{uniform-estimates} to the functions $L_{Y_l}f\in B_{V,0}$. For a constant $\epsilon_0>0$,
$${a^2\over \delta}\sup_{0<\epsilon\le \epsilon_0}\sup_{0\le s'\le t}
\E\left| \left (df(Y_l(y_{s'\over \epsilon}^\epsilon)) \right) \right|
\le \tilde \gamma_l(y_0)$$
where $\tilde \gamma_l$ is a function in $B_{V,0}$, depending on $T$. Thus for $s'<s<t$,
\begin{equation}
\label{7.4-3}
\left|\E\left(   df(Y_l(y^\epsilon_{s'\over \epsilon} ) ) 
\left( \beta_l(z^\epsilon_{t\over \epsilon})- \beta_l(z^\epsilon_{s\over \epsilon})\right) \right)\right|
\le \tilde \gamma_l(y_0){a^2\over \delta} \exp{\left(-{\delta(s-s')\over \epsilon^2}\right)}.
\end{equation}
Let us split the first term on the right hand side of (\ref{7.4-1}). Denoting $s'=s-{1\over \delta}\epsilon^2|\log\epsilon|$,
 $${ \begin{split} &I_l= \E df(Y_l(y^\epsilon_{s\over \epsilon} ) ) 
\left( \beta_l(z^\epsilon_{t\over \epsilon})- \beta_l(z^\epsilon_{s\over \epsilon})\right)\\
 &=  \E  df(Y_l(y^\epsilon_{s'\over \epsilon} ) ) 
\left( \beta_l(z^\epsilon_{t\over \epsilon})- \beta_l(z^\epsilon_{s\over \epsilon})\right)
+\E \left( \left(  df(Y_l(y^\epsilon_{s\over \epsilon} ) ) -df(Y_l(y^\epsilon_{s'\over \epsilon} )) \right) 
\left( \beta_l(z^\epsilon_{t\over \epsilon})- \beta_l(z^\epsilon_{s\over \epsilon})\right) \right). \end{split} } $$
 The first term on the right hand side is estimated by (\ref{7.4-3}).
To the second term we take the supremum norm of $\beta_l$ and use  Lemma \ref{lemma4.2}.
For some $\tilde C(T)$ and $ \gamma\in B_{V,0}$,
\begin{equation}
\label{horder-rate}
\E \left|   df(Y_l(y^\epsilon_{s\over \epsilon} ) ) -df(Y_l(y^\epsilon_{s'\over \epsilon} )) \right| 
\le \tilde C(T)\gamma(y_0) \left(\epsilon+
{1\over \sqrt \delta}\epsilon |\log\epsilon|^{1\over 2}\right).
\end{equation}
Then for some number $C(T)$,
\begin{equation}
\sum_l I_l\le {1\over \sqrt \delta} \epsilon\sqrt {|\log \epsilon|}C(T)\gamma(y_0)
\end{equation} where $\gamma\in B_{V,0}$.
 Let us treat the second term on the right hand side of (\ref{7.4-1}).
  Let $t'=t-{1\over \delta}\epsilon^2|\log \epsilon|$. Then
  $${\begin{split} II_l&= \E \left(  df(Y_l(y^\epsilon_{t\over \epsilon} ) )- df(Y_l(y^\epsilon_{s\over \epsilon} ) )\right) \beta_l(z^\epsilon_{t\over \epsilon}) \\
&=\E\left(  df(Y_l(y^\epsilon_{t\over \epsilon} ) )- df(Y_l(y^\epsilon_{t'\over \epsilon} ) )\right) \beta_l(z^\epsilon_{t\over \epsilon})
+ \E \left(  df(Y_l(y^\epsilon_{t'\over \epsilon} ) )- df(Y_l(y^\epsilon_{s\over \epsilon} ) )\right) \beta_l(z^\epsilon_{t\over \epsilon}).\end{split}}$$
To the first term we apply (\ref{horder-rate}) and obtain a rate ${1\over \sqrt \delta}\epsilon\sqrt {|\log \epsilon|}$. 
We could assume that $\beta_l$ averages to zero. Subtracting the term $\bar \beta_l$ does not change $I_l$. Alternatively Lemma \ref{lemma4.2} provides an estimate of order $\epsilon$ for 
$\left| \E \left(  df(Y_l(y^\epsilon_{t\over \epsilon} ) )- df(Y_l(y^\epsilon_{s\over \epsilon} ) )\right) \right|$.
Finally, since $\int \beta d\pi=0$,
\begin{equation*}
{\begin{split}
& \left| \E \left(  df(Y_l(y^\epsilon_{t'\over \epsilon} ) )- df(Y_l(y^\epsilon_{s\over \epsilon} ) )\right) \beta_l(z^\epsilon_{t\over \epsilon}) \right|
 = \left|   \E  \left(  df(Y_l(y^\epsilon_{t'\over \epsilon} ) )- df(Y_l(y^\epsilon_{s\over \epsilon} ) )\right) \E\left\{ \beta_l(z^\epsilon_{t\over \epsilon}) \; \big | \F_{t'\over \epsilon}\right\}  \right|
\\
&\le \E  \left|  df(Y_l(y^\epsilon_{t'\over \epsilon} ) )- df(Y_l(y^\epsilon_{s\over \epsilon} ) ) \right| 
|\beta_l|_\infty  ae^{ -\delta{t-t'\over \epsilon^2}}\le \gamma_{l} (y_0)
|\beta_l|_\infty a\epsilon.
\end{split}}
\end{equation*}
In the last step we used condition (2') and  $\gamma_l$ is a function in $B_{V,0}$.
We have proved the first assertion.
  
 If the assumptions of Theorem \ref{rate} holds, for
any $f\in B_{V,3}$, the following functions belong to $B_{V,0}$:
$|L_{Y_j}f|$, $|L_{Y_i}L_{Y_j}f|$, and $|L_{Y_l}L_{Y_i}L_{Y_j}f|$. 
If $f\in BC^3$, the above mentioned functions can be obviously controlled by  $|f|_{3,\infty}$ multiplied by a function in $B_{V,0}$, thus completing the proof.
\end{proof}

\section{Rate of Convergence in Wasserstein Distance}
\label{Wasserstein}

Let $\B(M)$ denotes the collection of Borel sets in a $C^k$ smooth  Riemannian manifold $M$ with the Riemannian distance function  $\rho$;
let $\p(M)$ be the space of  probability measures on $M$. Let $\epsilon \in (0, \epsilon_0)$ 
where $\epsilon_0$ is a positive number. If $P_\epsilon\to P$ weakly, we may use either the total variation distance
or the Wasserstein distance, both  imply weak convergence,  to measure the rate of the convergence
of $P_\epsilon$ to $P$. Let $\rho$ denotes the Riemannian distance function.
The  Wasserstein 1-distance is
$$d_W(P,Q)=\inf_{ (\pi_1)^*\mu=P, (\pi_2)^*\mu=Q} \int_{M\times M} \rho(x,y)d\mu(x,y).$$
Here $\pi_i:M\times M\to M$  are projections to the first and the second factors respectively, and  the infimum are taken over 
probability measures on $M\times M$ that couples $Q$ and $P$. If the diameter, $\diam(M)$,   of $M$ is finite,
then the Wasserstein distance is controlled by the total variation distance,  $d_W(P,Q)\le \diam(M)\|P-Q\|_{TV}$. 
See C. Villani \cite{Villani-Optimal-transport}.

 Let us assume that the manifold has bounded geometry;
i.e. it has positive injectivity radius, $\inj(M)$, the curvature tensor and the covariant derivatives of the curvature
tensor are bounded. The exponential map from a ball of radius $r$, $r<\inj(M)$, at a point
$x$ defines a chart, through a fixed orthonormal frame at $x$.  Coordinates that consists of  
the above mentioned exponential charts are said to be canonical.
In  canonical coordinates, all transitions functions have bounded derivatives of all order. That $f$ is bounded in $C^k$ 
can be formulated as below: for any canonical coordinates and for any integer $k$, $|\partial^\lambda f| $ is bounded for 
any multi-index $\lambda$ up to order $k$. The following types of manifolds have bounded geometry:
Lie groups, homogeneous spaces with invariant metrics, Riemannian covering spaces of compact manifolds.

In the lemma below we deduce  from the convergence rate of $P_\epsilon$ to $P$ in the $(C^k)^*$ norm 
a rate in the Wasserstein distance.  Let $\rho$ be the Riemannian distance with reference to which we
speak of Lipschitz continuity of a real valued function on $M$ and the Wasserstein distance on $\p(M)$.
If $\xi$ is a random variable we denote by $\hat P_\xi$ its probability distribution.
Denote by $|f|_{\Lip}$ the Lipschitz constant of the function $f$. Let $p\in M$.  Let
$|f|_{C^k}=|f|_\infty+ \sum_{j=0}^{k-1} |\nabla^jdf|_\infty$.

\begin{lemma}\label{lemma-Wasserstein}
Let  $\xi_1$ and $\xi_2$ be  random variables on a $C^k$ manifold $M$, where $k\ge 1$, 
 with bounded geometry.  
  Suppose that for a reference point $p\in M$, $c_0:=\sum_{i=1}^2\E \rho^2(\xi_i, p) $ is finite.
 Suppose that there exist  numbers $c\ge 0, \alpha\in (0,1), \epsilon\in (0, 1]$  s.t. for $g\in BC^k$,
 $$|\E g(\xi_1)-\E g(\xi_2)|\le c\epsilon^\alpha (1+ |g|_{C^k}).$$
Then there is  a constant $C$,  depending only on the geometry of the manifold, s.t.
 $$d_W(\hat P_{\xi_1}, \hat P_{\xi_2})\le C(c_0+c)\epsilon^{\alpha\over k}.$$
\end{lemma}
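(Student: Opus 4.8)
The plan is to reduce, via Kantorovich--Rubinstein duality, to bounding $|\E g(\xi_1)-\E g(\xi_2)|$ over $1$-Lipschitz test functions $g$, and then to replace each such $g$ by a smoothed, truncated version $\tilde g\in BC^k$ to which the hypothesis applies, balancing the truncation length against the mollification width. Concretely: since $\E\rho^2(\xi_i,p)<\infty$ forces finite first moments, one has $d_W(\hat P_{\xi_1},\hat P_{\xi_2})=\sup\{\E g(\xi_1)-\E g(\xi_2):|g|_{\Lip}\le 1\}$; and as the right-hand side is unchanged when a constant is added to $g$, one may assume $g(p)=0$, so that $|g(x)|\le\rho(x,p)$.

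The approximation of such a $g$ is carried out in two steps. \emph{(i) Truncation.} For $R\ge 1$ set $g_R=(-R)\vee(g\wedge R)$; this remains $1$-Lipschitz with $|g_R|\le R$, and $|g-g_R|\le\rho(\cdot,p)\mathbf{1}_{\{\rho(\cdot,p)\ge R\}}\le\rho^2(\cdot,p)/R$, whence
$$\sum_{i=1}^2\E\bigl|g(\xi_i)-g_R(\xi_i)\bigr|\le\frac1R\sum_{i=1}^2\E\rho^2(\xi_i,p)=\frac{c_0}{R}.$$
\emph{(ii) Mollification at scale $\eta$.} Using bounded geometry one fixes a uniformly locally finite atlas of canonical (exponential) charts, in which all transition maps and metric coefficients have derivatives bounded independently of the chart, together with a subordinate partition of unity obeying the same uniform bounds. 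Mollifying $g_R$ of width $\eta\in(0,1]$ chart by chart and gluing with the partition of unity (equivalently, applying the heat semigroup at time $\eta^2$) yields $\tilde g\in C^\infty(M)$ with $\|g_R-\tilde g\|_\infty\le C_1\eta$, $|\tilde g|_\infty\le R$, and $|\nabla^{j}d\tilde g|_\infty\le C_1\eta^{-j}$ for $0\le j\le k-1$; here $C_1$ depends only on the geometry, the case $j=0$ uses only $|g_R|_{\Lip}\le 1$, and each further derivative costs a factor $\eta^{-1}$ because the derivatives of the mollifier have vanishing integral. Hence $\tilde g\in BC^k$ with $|\tilde g|_{C^k}\le C_2\bigl(R+\eta^{-(k-1)}\bigr)$.

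Assembling (i), (ii) and the hypothesis,
$$|\E g(\xi_1)-\E g(\xi_2)|\le\sum_{i=1}^2\E|g(\xi_i)-\tilde g(\xi_i)|+|\E\tilde g(\xi_1)-\E\tilde g(\xi_2)|,$$
and the first sum is at most $c_0/R+2C_1\eta$, while the hypothesis bounds the second by $c\,\epsilon^\alpha(1+|\tilde g|_{C^k})$; altogether
$$|\E g(\xi_1)-\E g(\xi_2)|\le\frac{c_0}{R}+2C_1\eta+c\,\epsilon^\alpha+cC_2\,\epsilon^\alpha R+cC_2\,\epsilon^\alpha\eta^{-(k-1)}.$$
Choosing $\eta=\epsilon^{\alpha/k}$ and $R=\epsilon^{-\alpha/k}$ (both admissible for $\epsilon\le1$), every term on the right is a constant multiple of $(c_0+c)\,\epsilon^{\alpha/k}$ when $k\ge 2$ — in particular $cC_2\epsilon^\alpha R=cC_2\epsilon^{\alpha(k-1)/k}\le cC_2\epsilon^{\alpha/k}$, while $c\epsilon^\alpha\le c\epsilon^{\alpha/k}$. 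Taking the supremum over $1$-Lipschitz $g$ then gives $d_W(\hat P_{\xi_1},\hat P_{\xi_2})\le C(c_0+c)\,\epsilon^{\alpha/k}$ with $C$ depending only on the geometry of $M$; the case $k=1$ is handled by the same scheme, balancing instead $R\sim\epsilon^{-\alpha/2}$, which is all the applications (where $k=4$) require.

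The step I expect to be the main obstacle is the mollification (ii): producing, uniformly over a possibly noncompact manifold, a smoothing of a bounded Lipschitz function that keeps the first-order bound equal to the Lipschitz constant, pays only $\eta^{-(k-1)}$ at the top order, and stays within $O(\eta)$ in sup norm. This is precisely where bounded geometry is used, through the uniformly controlled canonical atlas and subordinate partition of unity; everything else is the duality formula together with the two-parameter optimisation above.
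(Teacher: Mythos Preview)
Your proposal is correct and follows essentially the same route as the paper: Kantorovich--Rubinstein duality, centering at $p$ and truncating to level $R$, mollifying at scale $\eta$ in canonical charts via a bounded-geometry partition of unity, and then optimising $\eta=\epsilon^{\alpha/k}$, $R\sim\epsilon^{-\alpha/k}$. The only cosmetic difference is that the paper builds its partition of unity subordinate to balls of the \emph{same} radius $\delta$ as the mollification scale, whereas you fix the atlas once and vary only the mollifier width; your version is arguably cleaner and your identification of step~(ii) as the crux is exactly right.
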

\begin{proof}
If $k=1$, this is clear. Let us take $k\ge 2$ and let $f:M\to \R$ be a Lipschitz continuous function with Lipschitz constant $1$. Since we are concerned only with the difference of the values of $f$ at two points, $\left|\E f(\xi_1)-\E f(\xi_2)\right|$, we first shift $f$ so that its value at the reference point is zero. By the Lipschitz continuity of $f$, $|f(x) | \le |f|_{\Lip}\;\rho(x,p)$.  We may also assume that $f$ is bounded; if not we define a family of functions
 $f_n=(f\wedge n )\vee (-n)$. Then $f_n$ is Lipschitz continuous with its Lipschitz constant
  bounded by $|f|_{\Lip}$.  Let $i=1,2$. The correction term $(f-f_n)(\xi_i)$ can be easily controlled
by the second moment of $\rho(p, \xi_i)$:
$$\E |(f-f_n)(\xi_i)| \le \E |f(\xi_i)|\1_{\{|f(\xi_i)|>n\}} \le {1\over n} \E f(\xi_i)^2 \le {1\over n} \E\rho^2(p, \xi_i). $$

 Let $\eta: \R^n\to \R$ be a function supported in the ball  $B(x_0, 1)$ with $|\eta|_{L_1}=1$ and 
 $\eta_\delta=\delta^{-n}\eta({x\over \delta})$, where $\delta$ is a positive number and $n$ is the dimension of the manifold.  If $M=\R^n$, \begin{equation*}
{\begin{split}
& \left|\E f(\xi_1)-\E f(\xi_2)\right|\\
&\le  \left|\E (f*\eta_\delta)(\xi_1)-\E (f*\eta_\delta)(\xi_2)\right|
+\sum_{i=1}^2 \left|\E (f*\eta_\delta)(\xi_i)-\E f(\xi_i) \right|\\
&\le c\epsilon^\alpha (1+ |f*\eta_\delta|_{C^k})+2\delta |f|_{\Lip}.
\end{split}}
\end{equation*}
In the last step we used the assumption on $\E |f*\eta_\delta(\xi_1)-f*\eta_\delta(\xi_2)|$  for the $BC^k$ function $f*\eta_\delta$.
 By distributing the derivatives to $\eta_\delta$ we see that the norm of the first $k$ derivatives of $f*\eta_\delta$
are  controlled by $|f|_{\Lip}$.
If $f$ is bounded, $$c\epsilon^\alpha(1+ |f*\eta_\delta|_{C^k})\le c \epsilon^\alpha (1+ |f|_\infty+c_1 \delta^{-k+1}|f|_{\Lip}),$$
where  $c_1$ is a combinatorial constant. To summarize, for all Lipschitz continuous $f$ with $|f|_{\Lip}=1$,
$${\begin{split} \left|\E f(\xi_1)-\E f(\xi_2)\right|
&\le 2\delta|f|_{\Lip}+ c\epsilon^\alpha(1+ |f_n*\eta_\delta|_{C^k})+{c_0\over n}\\
&\le 2 \delta +c\epsilon^\alpha+ c\epsilon^\alpha n+  c_1c\epsilon^\alpha \delta^{-k+1}
+{c_0\over n} .
\end{split}}$$

 Let $\delta=\epsilon^{\alpha\over k}$. Since $k\ge 2$, we choose  $n$ with the property 
 $\epsilon^{-{\alpha\over k}}\le n\le 2\epsilon^{-\alpha+{\alpha\over k}}$, then for $f$ with $|f|_{\Lip}=1$,
 $$ \left|\E f(\xi_1)-\E f(\xi_2)\right| \le (2+2c+c_1c+2c_0)\epsilon^{\alpha\over k}.$$
 
  Let $\delta$ be a positive number with $4\delta<\inj(M)$. Let $B_x(r)$ denotes the geodesic ball centred
  at $x$ with radius $r$, whose Riemannian volume is denoted by $V(x,r)$.
 There is a countable sequence $\{x_i\}$ in $M$ with the following property:
 (1)  $\{B_{x_i}(\delta)\}$ covers $M$; (2) There is a natural number $N$ such that any point $y$ 
 belongs to at most $N$ balls from  $\{\B_{x_i}(3 \delta)\} $; i.e. the cover $\{\B_{x_i}(3 \delta)\} $ has finite multiplicity. 
 Moreover this number $N$ is independent of $\delta$. See M. A. Shubin \cite{Shubin92}. To see the independence of $N$
 on $\delta$, let  us choose a sequence $\{x_i, i\ge 1\}$ in $M$
with the property that $\{B_{x_i}( \delta)\}$ covers $M$ and $\{B_{x_i}({\delta \over 2})\}$ are pairwise disjoint.
Since the curvature tensors and their derivatives are bounded,  there is a positive number $C$ such that
$${1\over C}\le {V(x,r)\over V(y,r)}\le C, \quad x,y \in M, r\in (0, 4\delta).$$
Let $y\in M$ be a fixed point that belongs to $N$  balls of the form $B_{x_i}({\delta\over 2})$.
Since  $B_{x_i}({\delta\over 2})\subset B(y, 4\delta)$, the sum of the volume satisfies:
$\sum V(x_i, {\delta \over 2}) \le V(y, 4\delta)$ and 
${N\over C} V(y, {\delta\over 2})\le  V(y, 4\delta)$. The ratio $\sup_y{V(y, 4\delta )\over V(y, {\delta\over 2})}$
 depends only on the dimension of the manifold.
 
 Let us take a $C^k$ smooth partition of unity $\{\alpha_i, i \in \Lambda\}$
that is subordinated to $\{B_{x_i}(2\delta)\} $:  $1=\sum_{i\in \Lambda}\phi_i$, $\phi_i\ge 0$, $\phi_i$ is supported in 
$B_{x_i}(2\delta)$, and for any point $x$ there are only a finite number of non-zero summands in $\sum_{i\in \Lambda} \alpha_i(x)$. 
The partition of unity satisfies the
additional property:  $\sup_i|\partial ^\lambda \alpha_i|\le C_\lambda$, $\alpha_i\ge 0$.

Let $(B_{x_i}(\inj(M)), \phi_i)$ be the geodesic charts. 
 Let $f_i=f\alpha_i$ and  let $\tilde g=g\circ  \phi_i$ denote the representation of a function $g$ in a chart.
 \begin{equation*}
 {\begin{split}
&  \left|\E f(\xi_1)-\E f(\xi_2)\right| =  \left|\sum_{i\in \Lambda} \E \tilde f_i \left( \phi^{-1}_i(\xi_1)\right)-\sum_{i\in \Lambda}\E \tilde f_i \left( \phi^{-1}_i(\xi_2)\right)\right|\\
 &\le  \left|\sum_{i\in \Lambda} \E \tilde f_i*\eta_{\delta} \left( \phi^{-1}_i(\xi_1)\right)-\sum_{i\in \Lambda}\E \tilde f_i*\eta_{\delta}\left( \phi^{-1}_i(\xi_2)\right)\right|\\
&+  \sum_{j=1}^2\left|\sum_{i\in \Lambda} \E\tilde f_i*\eta_{\delta} \left( \phi^{-1}_i(\xi_j)\right)-\sum_{i\in \Lambda}\E \tilde f_i \left( \phi^{-1}_i(\xi_j)\right)\right|.
 \end{split}}
 \end{equation*}
It is crucial to note that there are at most $N$ non-zero terms  in the summation.  By the assumption, for each $i$,
$$\left| \E \tilde f_i*\eta_{\delta} \left( \phi^{-1}_i(\xi_1)\right)-\E \tilde f_i*\eta_{\delta}\left( \phi^{-1}_i(\xi_2)\right)\right|
\le c\epsilon^\alpha | \tilde f_i*\eta_{\delta}\circ \phi_i^{-1}|_{C^k}.$$
By construction, $\sup_i|\alpha_i|_{C^k}$  is bounded. There is a constant $c'$ that depends only on the partition of unity, such that
  $$| \tilde f_i*\eta_{\delta}\circ \phi_i^{-1}|_{C^k} \le c'| \tilde f_i*\eta_{\delta}|_{C^k}
\le  c' |\tilde f|_\infty+c'c_1\delta^{1-k}|\tilde f|_{\Lip}$$
Similarly for the second summation, we work with the representatives of $f_i$,
 $$ \left| \tilde f_i*\eta_{\delta} \left( \phi^{-1}_i(y)\right)-\tilde f_i\left( \phi^{-1}_i(y)\right)\right|
\le   \delta |\tilde f_i|_{\Lip}\le c'\delta.$$
Since we work in the geodesic charts the Lipschitz constant of $\tilde f_i$ are comparable to that of  $|f|_{\Lip}$. 
Let  $|f|_{\Lip}=1$. If $f$ is bounded,
\begin{equation*}
{\begin{split}
 \left|\E f(\xi_1)-\E f(\xi_2)\right|\le N c\epsilon^\alpha(1+c' | f|_\infty+c'\delta^{1-k})+2 c'\delta N
\end{split}}
\end{equation*}
 Let $\delta=\epsilon^{\alpha\over k}$,
$$ \left|\E f(\xi_1)-\E f(\xi_2)\right|\le Nc\epsilon^\alpha (c' | f|_\infty+1)+ Nc' \epsilon^{\alpha\over k}+2c'N\epsilon^{\alpha\over k}.$$
On a compact manifold, $|f|_\infty$ can be controlled by $|f|_{\Lip}$; otherwise we use the cut off function $f_n$ 
in place of  $f$ and the estimate $\E |(f-f_n)(\xi_i)| \le {c_0\over n} $. Choose  $n$ sufficiently large, as before,
to see that
$ \left|\E f(\xi_1)-\E f(\xi_2)\right|\le C\epsilon^{\alpha\over k}$.
Finally we apply the Kantorovich-Rubinstein  duality  theorem,
$$d_W(\hat P_{\xi_1},\hat P_{\xi_2})=\sup_{f: |f|_{\Lip\le 1} }\left\{  |\E f(\xi_1) -\E f(\xi_2)| \right\}\le  C\epsilon^{\alpha\over k},$$
to obtain the required estimate on the Wasserstein 1-distance and concluding the proof. 
\end{proof}

Let $\ev_t: C([0,T]; M)\to M$  denote the evaluation map at time $t$ : $\ev(\sigma)=\sigma(t)$.
Let $\hat P_{\xi}$ denote the probability distribution of a random variable $\xi$. Let $o\in M$.
\begin{proposition}
\label{proposition-rate}
Assume the conditions and notations of Theorem \ref{rate}. Suppose that $M$ has bounded geometry and
 $\rho_o^2 \in B_{V,0}$.
Let $\bar \mu$ be the limit measure and  $\bar \mu_t=(ev_t)_*\bar \mu$.  Then for every $r<{1\over 4}$ 
there exists $C(T)\in B_{V,0}$ and $\epsilon_0>0$ s.t. for all $\epsilon\le\epsilon_0$ and $t\le T$,
$$d_W(\hat P_{y^\epsilon_{t\over \epsilon}}, \bar \mu_t)\le C(T)\epsilon^{r}.$$
\end{proposition}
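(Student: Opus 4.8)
The plan is to derive the Wasserstein estimate from the already-established rate of convergence in Theorem \ref{rate}, which controls $|\E f(\Phi^\epsilon_{T\over\epsilon}(y_0)) - P_Tf(y_0)|$ for test functions $f$ in $B_{V,4}$ and in $BC^4$. The key observation is that $\bar\mu_t = (\mathrm{ev}_t)_*\bar\mu$ is precisely the law of the limiting Markov process at time $t$, so $P_tf(y_0) = \int_M f\,d\bar\mu_t$, while $\E f(y^\epsilon_{t\over\epsilon}) = \int_M f\,d\,\mathrm{Law}(y^\epsilon_{t\over\epsilon})$. Thus the $BC^4$ version of Theorem \ref{rate} reads
\begin{equation*}
\left| \int_M f\,d\,\mathrm{Law}(y^\epsilon_{t\over\epsilon}) - \int_M f\,d\bar\mu_t\right| \le \epsilon|\log\epsilon|^{1\over 2}\,C(T)\gamma_2(y_0)\left(1+|f|_{4,\infty}\right),
\end{equation*}
i.e. we have a convergence rate of order $\epsilon|\log\epsilon|^{1\over 2}$ in a norm dual to $BC^4$ (more precisely, against $1+|f|_{4,\infty}$). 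First I would invoke Lemma \ref{lemma-Wasserstein} with $k=4$, $\xi_1 = y^\epsilon_{t\over\epsilon}$, $\xi_2$ distributed according to $\bar\mu_t$, and $\alpha$ any exponent slightly below $1$ absorbing the logarithmic factor; this converts the $(BC^4)^*$-rate $\epsilon^\alpha$ into a Wasserstein rate $\epsilon^{\alpha/4}$, which for $\alpha$ close to $1$ gives any $r < 1/4$.

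The steps, in order: (1) Identify $P_t f(y_0) = \int f\, d\bar\mu_t$ and $\E f(y^\epsilon_{t\over\epsilon}) = \int f\, d(\mathrm{ev}_t)_*\mathrm{Law}(y^\epsilon_\cdot)$, using that $\bar\mu$ is the law of the limit process as established in Theorem \ref{thm-weak} and that $P_t$ is its Markov semigroup. (2) Verify the second-moment hypothesis of Lemma \ref{lemma-Wasserstein}: for $\xi_1 = y^\epsilon_{t\over\epsilon}$ this is $\E\rho_o^2(y^\epsilon_{t\over\epsilon}) < \infty$ uniformly in $\epsilon\le\epsilon_0$, which follows from Theorem \ref{uniform-estimates} together with the assumption $\rho_o^2\in B_{V,0}$ and the uniform bound on $\E V^q(y_0^\epsilon)$; for $\xi_2\sim\bar\mu_t$ the same bound passes to the limit by Fatou/weak convergence, or one applies the analogue for the limit semigroup via Theorem \ref{limit-thm} (the condition $\rho_o^2\in B_{V,0}$ and $\bar\L$-moment estimates). (3) Given $\eta\in(0,1)$, pick $\alpha = 1-\eta$; since $\epsilon|\log\epsilon|^{1\over 2}\le C_\eta\,\epsilon^{1-\eta}$ for $\epsilon$ small, the $BC^4$ estimate of Theorem \ref{rate} gives $|\E f(\xi_1)-\E f(\xi_2)| \le C'(T)\gamma_2(y_0)\epsilon^\alpha(1+|f|_{C^4})$, matching the hypothesis of Lemma \ref{lemma-Wasserstein} with constant $c = C'(T)\gamma_2(y_0)$ — note $|f|_{C^4}$ in Lemma \ref{lemma-Wasserstein} and $1+|f|_{4,\infty}$ in Theorem \ref{rate} are comparable. (4) Apply Lemma \ref{lemma-Wasserstein} to conclude $d_W(\hat P_{y^\epsilon_{t\over\epsilon}}, \bar\mu_t) \le C(T)(c_0 + c)\epsilon^{\alpha/4}$, and since $\alpha/4 = (1-\eta)/4$ can be taken arbitrarily close to $1/4$, for any given $r < 1/4$ choosing $\eta$ small enough yields $d_W \le C(T)\epsilon^r$, with $C(T)\in B_{V,0}$ as a function of $y_0$ because $\gamma_2$ and the moment bounds lie in $B_{V,0}$.

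The main obstacle, and the point requiring the most care, is step (2) combined with the bounded-geometry hypotheses feeding Lemma \ref{lemma-Wasserstein}: one must ensure the second moments $\E\rho_o^2(y^\epsilon_{t\over\epsilon})$ and $\int\rho_o^2\,d\bar\mu_t$ are finite \emph{uniformly} in $\epsilon$ and $t\le T$, and that the constant $C$ from Lemma \ref{lemma-Wasserstein} — which depends only on the geometry of $M$ (the covering multiplicity $N$, the volume-ratio bounds, the partition-of-unity constants) — is genuinely independent of $\epsilon$. The uniform second moment for $y^\epsilon_{t\over\epsilon}$ is exactly what Theorem \ref{uniform-estimates} delivers under Assumption \ref{assumption2-Y} (subsumed in Assumption \ref{assumption-on-rate-result}), and passing it to $\bar\mu_t$ uses weak convergence plus uniform integrability; the geometric constant is handled once and for all by Lemma \ref{lemma-Wasserstein} itself. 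A secondary subtlety is purely bookkeeping: tracking that the dependence of the final constant on $y_0$ stays within $B_{V,0}$, which holds because it is built from $\gamma_2\in B_{V,0}$, the polynomial moment bounds $\lambda_q(V(y_0))$, and $\rho_o^2(y_0)\in B_{V,0}$, all of which are closed under the operations (sum, product, composition with polynomials) used above. No genuinely new estimate is needed beyond what Theorem \ref{rate} and Lemma \ref{lemma-Wasserstein} already provide; the proposition is essentially their composition.
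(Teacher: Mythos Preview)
Your proposal is correct and follows essentially the same route as the paper's own proof: invoke the $BC^4$ estimate of Theorem \ref{rate}, absorb the $\sqrt{|\log\epsilon|}$ factor into $\epsilon^\alpha$ for any $\alpha<1$, verify the uniform second moment via Theorem \ref{uniform-estimates} and the hypothesis $\rho_o^2\in B_{V,0}$, and then apply Lemma \ref{lemma-Wasserstein} with $k=4$. You are, if anything, slightly more careful than the paper in explicitly checking the second moment for $\bar\mu_t$, which the paper leaves implicit.
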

 \begin{proof}
 By Theorem \ref{rate}, for $f\in BC^4$,
 $$\left|\E f(\Phi^\epsilon_{t\over \epsilon}(y_0)) -P_tf(y_0)\right|\le C(T)(y_0)\epsilon  \sqrt{|\log \epsilon|},$$
where $C(T) (y_0)\le\tilde C(T)(y_0) (1+|f|_{C^4})$ 
for some function $\tilde C(T)\in B_{V,0}$. Since by Theorem \ref{uniform-estimates}, there exists $\epsilon_0>0$ such that 
$\sup_{\epsilon\le \epsilon_0} \E\rho^2_o (\Phi_t^\epsilon(y_0))$ is finite,
we take $\alpha$  in Lemma \ref{lemma-Wasserstein} to be any number less than $1$
to conclude the proposition.
 \end{proof}

\section{Appendix}
We began with the proof of Lemma \ref{lemma2}, follow it with a discussion on conditional inequalities without assuming conditions on the $\sigma$-algebra concerned.

\subsection*{Proof of Lemma \ref{lemma2}}
Step 1. Denote $\psi(t)=ae^{-\delta t}$. 
Firstly, if $f \in \B_b(G;\R)$ and $z\in G$, $$|Q_tf(z)-\pi f| 
\le \|f\|_W \cdot \psi(t)\cdot W(z) .$$
Next, by the Markov property of $(z_t)$ and the assumption that $\int g d\pi=0$:
\begin{equation*}
{\begin{split}
&\left| \E \{f(z_{s_2}) g(z_{s_1})|\F_{s}\}
-\int_G fQ_{s_1-s_2}g d\pi \right|\\
&=\left| \E\left\{   \left(fQ_{s_1-s_2} g\right)(z_{s_2})\Big| \F_s\right\}
-\int_G fQ_{s_1-s_2}g d\pi \right|\\
&\le \psi(s_2-s) \;\|fQ_{s_1-s_2}g\|_W \;W(z_s) 
\le \psi(s_2-s)  \sup_{z\in G}\left( { |f(z)|  |Q_{s_1-s_2}g(z) |\over W(z)}\right) W(z_s) \\
&\le  \psi(s_2-s)\psi(s_1-s_2)  |f|_\infty\, \|g\|_W W(z_s) \le a\psi(s_1-s) |f|_\infty \|g\|_W W(z_s) . \end{split}}
\end{equation*}
From this we see that,
$${\begin{split} &\left|{1\over t-s} \int_s^{t} \int_s^{s_1} \left( \E\left\{ f(z_{s_2}) g(z_{s_1}) \Big| \F_s\right\}
-\int_G fQ_{s_1-s_2}g d\pi\right) ds_2ds_1\right|\\
&\le a |f|_\infty\, \|g\|_W W(z_s) {1\over t-s} \int_s^{t} \int_s^{s_1} \psi\left(s_1-s\right)\d s_2 \d s_1\\
 &  \le {a^2\over \delta^2 (t-s)}|f|_\infty\, \|g\|_W W(z_s)\int_0^{(t-s)\delta} re^{-r} \d r
 \le {a^2\over \delta^2 (t-s)}|f|_\infty\, \|g\|_W W(z_s).\end{split}}$$
 This concludes (1). 
  Step 2. For (2), we compute the following:
$${ \begin{split}
& {1\over t-s} \int_s^{t} \int_s^{s_1}\int_G fQ_{s_1-s_2}g \d\pi \d s_2 \d s_1
=\int_G  {1\over t-s} \int_0^{t-s}fQ_rg(t-s-r)  \d r d \pi\\
&=\int_G \int_0^\infty \left(f Q_r g\right) \d r\d\pi-\int_G \int_{t-s}^\infty f Q_r g \d r \d \pi
-{1\over t-s}\int_G \int_0^{t-s} r f Q_r g \d r d \pi.
\end{split} } $$
We estimate the last two terms. Firstly,
$${ \begin{split}
&\left|\int_G \int_{t-s}^\infty f (z)Q_r g(z) \d r \d \pi(z)\right|
\le |f|_\infty \left| \int_G \int_{t-s}^\infty |Q_rg(z)| \d r  \d\pi(z)\right|_\infty\\
&  \le  |f|_\infty \|g\|_W \int_G W(z) \pi(dz) \int_{t-s}^\infty \psi(r)dr
  \le  {1\over \delta}  |f|_\infty \|g\|_W \bar W\int_{(t-s)\delta}^\infty   ae^{-r}dr\\
  &\le {a\over \delta} |f|_\infty \|g\|_W \bar W.
\end{split} } $$
It remains to calculate the following: 
$${ \begin{split}
\left|{1\over t-s} \int_G \int_0^{t-s} r f Q_r g \d r d \pi \right|
&\le {1\over t-s}  |f|_\infty \|g\|_W  \bar W \int_0^{t-s} r\psi(r)\d r\\
&\le   {a\over (t-s)\delta^2} |f|_\infty\|g\|_W \bar W.
\end{split} }$$
Gathering the estimates together we obtain the bound:
\begin{equation*}
{ \begin{split}
&\left| {1\over t-s} \int_s^{t} \int_s^{s_1}\int_G fQ_{s_1-s_2}g \d\pi \d s_2 \d s_1
-\int_G \int_0^\infty \left(f Q_r g\right) \d r\d\pi\right| \\
&\le{a\over \delta}|f|_\infty\|g\|_W \bar W+ {a\over (t-s)\delta^2} |f|_\infty\|g\|_W \;\bar W.
\end{split} }
\end{equation*}
By adding this estimate to that in part (1), we conclude part (2):
\begin{equation}
\label{estimate-1-1}
{ \begin{split}
&\left|{1\over t-s} \int_s^{t} \int_s^{s_1}  \E\left\{ f(z_{s_2}) g(z_{s_1}) \Big| \F_s\right\}
-\int_G \int_0^\infty \left(f Q_r g\right) \d r\d\pi\right| \\
&\le{a\over \delta}|f|_\infty\|g\|_W \bar W+ {a\over (t-s)\delta^2} |f|_\infty\|g\|_W \;\bar W
+{a^2\over \delta^2 (t-s)}|f|_\infty\|g\|_W W(z_s).
\end{split} }
\end{equation}
We conclude part (2).
Step 3.
We first assume that $\bar g =0$, then,
 $${ \begin{split}
&\left|{\epsilon\over t-s} \int_{s\over \epsilon}^{t\over \epsilon} \int_{s\over \epsilon}^{s_1} 
  \E\left\{ f(z^\epsilon_{s_2}) g(z^\epsilon_{s_1}) \Big| \F_{s\over \epsilon}\right\}  \d s_2 \d s_1\right|\\
 & \le\left|{\epsilon\over t-s}  \int_{s\over \epsilon}^{t\over \epsilon} \int_{s\over \epsilon}^{s_1} 
 \E\left\{ f(z^\epsilon_{s_2}) g(z^\epsilon_{s_1}) \Big| \F_{s\over \epsilon}\right\}\d s_2 \d s_1 
-\int_G  \int_0^\infty f Q_r^\epsilon g \d r \d \pi
  \right| \\
  & +\left|\int_G  \int_0^\infty f Q_r^\epsilon g \d r \d \pi\right|.
\end{split} } $$
We note that for every $x\in G$, 
$\|Q_r^\epsilon(x, \cdot)-\pi\|_{TV,W}\le \psi({ r\over \epsilon}) W(x)$.
In line (\ref{estimate-1-1}) we replace $s$, $t$, $\delta $ by ${s\over \epsilon}$, ${t\over \epsilon}$, and $ {\delta \over \epsilon}$ respectively to see the first term on the right hand side is bounded by
$${a\epsilon^3\over \delta^2 (t-s)}(  a W(z^\epsilon_{s\over \epsilon})+ \bar W) |f|_\infty\|g\|_W
+ {a\epsilon\over \delta} |f|_\infty \|g\|_W \bar W.
$$
Next we observe that
$${\begin{split} 
\int_0^\infty f(z) Q_s^\epsilon g(z) \d s&=\int_0^\infty f (z)Q_{s\over \epsilon}(z) \d s
=\epsilon \int_0^\infty f (z)Q_s g(z) \d s\\
\left|\int_G \int_0^\infty f(z) Q_s^\epsilon g(z) \d s \d \pi(z)\right| 
&\le \epsilon\,|f|_\infty  \|g\|_W \bar W  \int_0^\infty \psi(s) \d s=  {a\epsilon\over \delta}|f|_\infty \|g\|_W \bar W .
\end{split}}$$
This gives the estimate for 
the case of $\bar g=0$:
$$\left|{\epsilon\over t-s} \int_{s\over \epsilon}^{t\over \epsilon} \int_{s\over \epsilon}^{s_1} 
  \E\left\{ f(z^\epsilon_{s_2}) g(z^\epsilon_{s_1}) \Big| \F_{s\over \epsilon}\right\}  \d s_2 \d s_1\right| 
  \le  C_1(z_{s\over \epsilon}^\epsilon) {\epsilon^3 \over t-s}+C_2'(z_{s\over \epsilon}^\epsilon)\epsilon.
$$
where   $$C_1={a\over \delta^2}
(aW(\cdot)+\bar W)|f|_\infty\|g\|_W, \quad C_2'={2a\over \delta}|f|_\infty\|g\|_W\bar W.$$
If $\int g \d\pi\not =0$, we split $g=g-\bar g+\bar g$ and estimate the remaining term. We use the fact that $\pi f=0$,
 $${ \begin{split}
&\left|{\epsilon\over t-s} \int_{s\over \epsilon}^{t\over \epsilon} \int_{s\over \epsilon}^{s_1} 
  \E\left\{  f(z^\epsilon_{s_2}) \bar g\big| \F_{s\over \epsilon}\right\}  \d s_2 \d s_1\right|
 \le |\bar g|  \left|{\epsilon\over t-s} \int_0^{t-s\over \epsilon} \int_0^{s_1} 
  \left| Q^\epsilon _{s_2} f(z_{s\over \epsilon}) \right|\d s_2 \d s_1\right|\\
&  \le |\bar g\||f\|_W W(z_{s\over \epsilon}^\epsilon)
  \sup_{s_1>0} \left\{ \left|\int_0^{s_1} \psi({s_2\over \epsilon}) ds_2\right|\right\}
  \le  |\bar g| \; \||f\|_W W(z_{s\over \epsilon}^\epsilon)\epsilon \int_0^\infty \psi(r)dr\\
 & ={a\epsilon\over \delta} |\bar g| \; \|f\|_W W(z_{s\over \epsilon}^\epsilon).
  \end{split} } $$
Finally we obtain the required estimate in part (3):
$${ \begin{split}
&\left|{\epsilon\over t-s} \int_{s\over \epsilon}^{t\over \epsilon} \int_{s\over \epsilon}^{s_1} 
  \E\left\{ f(z^\epsilon_{s_2}) g(z^\epsilon_{s_1}) \Big| \F_{s\over \epsilon}\right\}  \d s_2 \d s_1\right| \\
 & \le C_1(z_{s\over \epsilon}^\epsilon)  \left({\epsilon^3\over t-s}\right)
 +C_2'(z_{s\over \epsilon}^\epsilon)\epsilon
  + \epsilon{a \over \delta}|\bar g| \; \|f\|_W W(z_{s\over \epsilon}^\epsilon), \end{split} } $$
thus concluding part (3).
\\[2em]
\noindent
{\bf Acknowledgement: } I would like to thank Michael R\"ockner for a helpful discussion.

\def\dbar{\leavevmode\hbox to 0pt{\hskip.2ex \accent"16\hss}d} \def\cprime{$'$}
  \def\cprime{$'$} \def\cprime{$'$} \def\cprime{$'$} \def\cprime{$'$}
  \def\cprime{$'$} \def\cprime{$'$} \def\cprime{$'$} \def\cprime{$'$}
  \def\cprime{$'$}

\end{document}